\pdfoutput=1
\documentclass[11pt]{amsart}
\usepackage{amsmath}
\usepackage{amssymb}
\usepackage{mathtools}

\usepackage{hyperref} 
\hypersetup{colorlinks=true, linkcolor=blue, citecolor = blue, urlcolor=blue, filecolor=magenta}

\usepackage{enumerate}
\usepackage{enumitem}
\usepackage{slashed}
\usepackage{caption}
\usepackage{subcaption}
\usepackage{newlfont}
\usepackage{amsrefs}
\usepackage{comment}
\usepackage[normalem]{ulem}
\usepackage{accents}
\usepackage{leftidx}

\usepackage{harpoon}
\usepackage[pdftex]{graphicx}
\usepackage{esint}
\usepackage{relsize}
\usepackage[latin1]{inputenc}
\usepackage{xcolor}

\definecolor{brass}{rgb}{0.71, 0.65, 0.36}

\usepackage{tikz-cd}
\theoremstyle{plain}
\newtheorem{theorem}{Theorem}[section]
\newtheorem{lemma}[theorem]{Lemma}
\newtheorem{prop}[theorem]{Proposition}

\newtheorem{proposition}[theorem]{Proposition}

\theoremstyle{definition}
\newtheorem{remark}[theorem]{Remark}
\newtheorem{definition}[theorem]{Definition}

\numberwithin{equation}{section}

\DeclareMathOperator{\sgn}{sgn}

\theoremstyle{plain}

\numberwithin{equation}{section}

\allowdisplaybreaks

\newcommand{\td}{\mathrm{d}}

\begin{document}

\title[A Comparison Theorem For the Mass of ALE and ALF Toric 4-Manifolds]
{A Comparison Theorem For the Mass of ALE and ALF Toric 4-Manifolds}

\author[Alaee]{Aghil Alaee}
\address{
Department of Mathematics, Clark University, Worcester, MA 01610, USA}
\email{aalaeekhangha@clarku.edu}

\author[Khuri]{Marcus Khuri}
\address{Department of Mathematics\\
Stony Brook University\\
Stony Brook, NY 11794, USA}
\email{marcus.khuri@stonybrook.edu}

\author[Kunduri]{Hari Kunduri}
\address{Department of Mathematics and Statistcs \& Physics and Astronomy, McMaster University, Hamilton, ON Canada}
\email{kundurih@mcmaster.ca}


\thanks{A. Alaee acknowledges the support of NSF Grant DMS-2316965. M. Khuri acknowledges the support of NSF Grants DMS-2104229 and DMS-2405045. H. Kunduri acknowledges the support of NSERC Grant RGPIN-2025-06027.}

\begin{abstract} 
We establish sharp lower bounds for the mass of asymptotically locally Euclidean (ALE) and asymptotically locally flat (ALF) toric 4-manifolds, in terms of equilibrium geometries consisting of gravitational instantons. More precisely, the mass of a complete ALE or ALF toric 4-manifold with nonnegative scalar curvature is bounded below by a sum comprised of the following quantities: the mass of the corresponding toric gravitational instanton having the same orbit space (rod) structure as the original ALE/ALF manifold, and an expression determined by the conical angle defects of totally geodesic 2-spheres within the instanton that serve as generators for its second homology. The inequality may be generalized to the situation in which the ALE/ALF manifold also possesses conical singularities as well as orbifold singularities, and it suggests a refined notion of `total mass' in which the result simply states that the total mass of the ALE/ALF manifold is not less than that of the corresponding gravitational instanton. Furthermore, we prove rigidity for these statements, namely the inequality is saturated only when the ALE/ALF manifold is Ricci flat and in fact agrees with the corresponding instanton. These results may be viewed in the context of positive mass theorems, providing an explanation of how positivity can fail in the ALE/ALF setting. Moreover, the main theorem may be interpreted as yielding a variational characterization of the relevant toric gravitational instantons.
\end{abstract}

\maketitle
\section{Introduction} 
\label{sec1} \setcounter{equation}{0}
\setcounter{section}{1}


The positive mass theorem is a central achievement in the study of scalar curvature and mathematical relativity, and was originally established for asymptotically Euclidean (AE) manifolds with nonnegative scalar curvature by Schoen-Yau \cite{schoen-yau1979} and Witten \cite{Witten}. Various incarnations of this theorem have been found in a variety of other settings. In the asymptotically hyperbolic case, important contributions were made by Andersson-Cai-Galloway \cite{ACG}, Chru\'{s}ciel-Herzlich \cite{ChruscielHerzlich}, Wang \cite{XWang}, and Zhang \cite{XZhang}. Extensions to the asymptotically locally hyperbolic setting were obtained by Alaee-Hung-Khuri \cite{AHK}, Brendle-Hung \cite{BrendleHung}, and Lee-Neves \cite{LeeNeves}, while the complex hyperbolic case was treated by Herzlich \cites{Herzlich1,Herzlich2}. In this article we will be concerned with ALE and ALF manifolds which, in particular, arise naturally in the study of \textit{gravitational instantons} --- complete, noncompact, Ricci flat 4-dimensional Riemannian manifolds with square-integrable curvature.

\begin{definition}\label{ALEdef}(ALE Manifold)  
A connected and complete Riemannian 4-manifold $(M,g)$ is said to be \textit{asymptotically locally Euclidean} (ALE) if there exists a compact set $K\subset M$, a finite subgroup $\mathcal{G}\subset O(4)$ acting freely on coordinate spheres,
and a diffeomorphism $\varphi:(\mathbb{R}^4 \setminus \overline{B}_1)/\mathcal{G} \rightarrow M\setminus K$ such that
\begin{equation}
|\mathring{\nabla}^l(\varphi^* g -b)|_b=O(r^{-1-\kappa-l}),\quad\quad l=0,1,2,
\end{equation}
for some $\kappa>0$ where $b$ is the flat cone metric on $(\mathbb{R}^4 \setminus \overline{B}_1)/\mathcal{G}$ with radial distance function $r$, and $\mathring{\nabla}$ denotes covariant differentiation with respect to $b$. Moreover, the scalar curvature of $g$ is required to be integrable, $R_g \in L^{1}(M)$. If $\mathcal{G}$ is trivial, then the manifold is called \textit{asymptotically Euclidean} (AE).
\end{definition}

\begin{remark}
The definition of an ALE manifold, and that of an ALF manifold given below, often allow for multiple ends. Although the results of this paper may be generalized
to include more than one end, for simplicity of presentation this will not be pursued here. Note also that the model metric may be expressed in polar form
$b=dr^2 +r^2 b_s$, where $b_s$ is the metric of constant $+1$ curvature on the radial cross-section $\mathcal{S}=S^3 /\mathcal{G}$. Moreover, regularity of the metric is left unspecified here and in the definition of an ALF manifold below, since mild singularities will eventually be included.
\end{remark}

Motivated by questions in quantum gravity, the validity of the positive mass theorem was also conjectured for the ALE setting.  However, the Eguchi-Hanson manifold \cite{EH} may be  observed to violate the rigidity statement, and LeBrun \cite{LeBrun} generalized their construction to find an infinite family of explicit counterexamples to the inequality. 
Nevertheless, positivity of mass in the ALE case has been established under additional hypotheses, notably for certain K\"{a}hler manifolds by Hein-LeBrun \cite{HeinLeBrun} and under suitable spin-structure matching conditions by Dahl \cite{Dahl} and Deruelle-Ozuch \cite{DO}. 

\begin{definition}[ALF Manifold \text{\cite[Definition 1]{BGL}}] \label{def:ALF} 
A connected and complete Riemannian 4-manifold $(M,g)$ will be called \textit{asymptotically locally flat} (ALF) if the following conditions are satisfied.
\begin{enumerate}[label=(\roman*)]
\item There is a compact subset $K\subset M$ and a diffeomorphism $\varphi: \mathbb{R}_+ \times \mathcal{S} \rightarrow M\setminus K$, where $\mathcal{S}$ is a closed 3-manifold finitely covered by $S^1 \times S^2$ or $S^3$. If $\mathcal{S}=S^1 \times S^2$ then the manifold is called \textit{asymptotically flat} (AF)\footnote{It should be noted that the terminology of an asymptotically flat manifold has taken on two inequivalent meanings within the context of the positive mass theorem, one arising from mathematical relativity and another from the study of gravitational instantons. In this article we will only use the latter notion.}.

\item On $\mathcal{S}$ there is a 1-form $\tau$ and a vector field $T$ such that $\iota_{T}\tau=1$ and $\mathcal{L}_{T}\tau=0$, where $\iota$ denotes interior product and $\mathcal{L}$ indicates Lie differentiation.

\item On $\mathcal{S}$ there is also a positive-semi-definite symmetric 2-tensor $\gamma$ such that $\mathcal{L}_{T}\gamma=0$, $\mathrm{ker}\!\text{ }\gamma=\mathrm{span}\!\text{ }T$, and which locally defines a metric of Gauss curvature $+1$ on the space of leaves of the foliation tangent to $T$.

\item $\mathbb{R}_+ \times\mathcal{S}$ is equipped with a model metric
\begin{equation}\label{def-model}
b =dr^2 +r^2 \gamma + \ell^2 \tau^2,
\end{equation}
where $r$ parameterizes $\mathbb{R}_+$ and $\ell>0$ is a constant.  

\item After pulling back via diffeomorphism $\varphi$, the metric $g$ asymptotes to the model with decay
\begin{equation}\label{ALFdecay}
|\mathring{\nabla}^l (\varphi^* g-b)|_{b}=O(r^{-\frac{1}{2}-\kappa-l}), \quad\quad l=0,1,2,
\end{equation}
where $\kappa>0$ and $\mathring{\nabla}$ denotes covariant differentiation with respect to $b$.

\item The scalar curvature of $g$ is integrable, $R_g \in L^1(M)$.
\end{enumerate}
\end{definition}


\begin{remark}
An important special case of an ALF manifold occurs when $\mathcal{S}$ is an $S^1$-bundle over $S^2$, as exemplified by the Taub-NUT gravitational instanton in which the relevant bundle is the Hopf fibration of $S^3$. Such an ALF manifold, in which the bundle has Euler number $e$, is referred to as ALF-$A_k$ where $k =-e - 1$. A notable case of AF manifolds occurs when the model geometry is flat and arises as the quotient $(\mathbb{R}^3 \times \mathbb{R})/\mathbb{Z}$, where the generator of the action is given by a rotation of angle $2\pi\beta \ell$ in $\mathbb{R}^3$ and a translation by distance $2\pi \ell$ in $\mathbb{R}$, for some real number $\beta$; such manifolds are referred to as AF$_{\beta \ell}$.
When $\beta \ell$ is irrational the vector field $T$ does not have closed orbits, which happens for generic members of the Kerr and Chen-Teo families of AF instantons. 
\end{remark}

Associated with each ALE or ALF manifold $(M,g)$ is a well-defined notion of mass. In analogy with the classical AE setting, its expression is derived from the flux of the linearized scalar curvature operator. In particular, the mass is defined by
\begin{equation}\label{mass.def}
\mathrm{mass}_{b} (M,g):=\lim_{r\to\infty}\frac{1}{4\pi}\int_{\mathcal{S}_r}\left(\mathrm{div}_{b}\text{ }\! \mathbf{e}-d\mathrm{Tr}_{b}\text{ }\! \mathbf{e}\right)\!(\partial_r) d\mathcal{V},\qquad \mathbf{e}=\varphi^*g-b,
\end{equation}
where $d\mathcal{V}$ is the volume form of the $r$-level set $\mathcal{S}_r$ in the model geometry. The fact that the mass is a geometric invariant in the ALE setting follows from the proof of Bartnik \cite{bartnik1986} and Chru\'{s}ciel \cite{Chrusciel} in the AE regime. Geometric invariance with respect to the choice of ALF structure
is established in \cite[Proposition 2.8]{KhuriWang} in the ALF-$A_k$ case. 
An earlier statement of this type for AF$_0$ asymptotics was made by Minerbe \cite[pg. 952]{minerbe}; in \cite[Proposition 6]{minerbe} an analogous result was proven for the so called `Gauss-Bonnet mass' (tailored to Ricci curvature) in the ALF-$A_k$ context. Note that in the AF$_0$ setting, the mass \eqref{mass.def} agrees with, up to normalization, those of \cites{BCH,CLSZ,Dai,LSZ} as well as \cite[Theorem 2]{minerbe}. Furthermore, we point out that in the AF$_{\beta\ell}$ case, the property of geometric invariance of the mass does not appear to have been addressed in the literature. Accordingly, in this case Theorem \ref{main.theorem} should be understood with respect to the prescribed AF$_{\beta\ell}$ coordinate structure.

The mass may be viewed as a geometric invariant that connects scalar curvature with the global geometry and topology of the manifold. The Euclidean Reissner-Nordstr\"{o}m metrics on $\mathbb{R}^2 \times S^2$ are AF$_0$, complete, and scalar flat, but they can have negative mass for certain choices of parameters. Similarly, the charged Taub-Bolt Einstein-Maxwell instanton is an example of a complete ALF-$A_0$ manifold with zero scalar curvature, that admits negative mass for certain ranges of parameters. These examples, as well as others, are discussed in detail in Section \ref{sec8}. Thus, as in the ALE setting, the positive mass theorem dramatically fails for ALF manifolds. On the other hand, like Dahl's result \cite{Dahl} in the ALE spin case, Minerbe \cite[Theorem 2]{minerbe} considered AF$_0$ manifolds with nonnegative scalar curvature and a matching condition for the spin structure at infinity, to establish a positive mass theorem. In a different direction, Liu-Shi-Zhu \cite[Theorem 1.2]{LSZ} (see also Chen-Liu-Shi-Zhu \cite[Theorem 1.8]{CLSZ}) show that for AF$_0$ manifolds of dimensions less than 8 the positive mass theorem holds if the circle at infinity is homotopically nontrivial. Moreover, Khuri-Wang \cite[Theorem 1.2]{KhuriWang} obtain the same conclusion in the AF$_0$ setting under the hypothesis that a codimension-two coordinate sphere in the asymptotic end is trivial within the homology of $M$. 
Related results were additionally found by Dai \cite{Dai}, Dai-Sun \cite{DaiSun}, and Barzegar-Chru\'{s}ciel-H\"{o}rzinger \cite{BCH}.
In the case of ALF-$A_k$ manifolds, Minerbe \cite[Theorem 1]{minerbe} proved positivity of mass under the assumption of nonnegative Ricci curvature; note that Minerbe's definition of mass in this result does not coincide with the standard one. While in the same setting, Khuri-Wang \cite[Theorem 1.7]{KhuriWang} establish a positive mass lower bound in terms of bundle degree with the hypotheses of nonnegative scalar curvature and an almost free $U(1)$ action.  Furthermore, an ALF-$D_2$ positive mass theorem was established by Kim-Ozuch \cite[Theorem 0.5]{KO}.

Despite this progress, positivity properties and more generally geometric inequalities involving the mass remain poorly understood in the ALE and ALF regimes. The purpose of the present paper is to investigate to what extent a positive mass style theorem can be achieved in the presence of such robust counterexamples, as described above. More precisely, we seek a result that `applies to' such counterexamples, rather than avoiding them with exclusionary hypotheses, in the hope of understanding the reason for the presence of negative mass.
In this regard, we shall restrict attention to toric ALE and ALF manifolds, and will show that a full and satisfactory answer to this question may be given in this setting.

\begin{definition}\label{toricdef}
A \textit{toric ALE or toric ALF manifold} is an ALE or ALF manifold that admits an effective isometric $T^2$ action,
which is compatible with the ALE or ALF structure in the following sense. Let $\eta_1$, $\eta_2$ denote Killing field generators of this action.
\begin{enumerate}[label=(\roman*)]
\item 
In the ALE case, it asymptotes to an effective isometric $T^2$ action on the model flat cone geometry of the end, which preserves radial cross-sections. Moreover, there exist generators $\xi_1$, $\xi_2$ of the limiting action such that
\begin{equation}
|\mathring{\nabla}^l \left((\varphi^{-1})_{*}\eta_i -\xi_i \right)|_{b}=O(r^{-\kappa-l}),
\end{equation}
for $l=0,1$, $i=1,2$.

\item In the ALF case, it asymptotes to an effective isometric $T^2$ action on the model geometry of the end, which preserves the radial cross-sections and leaves $\tau$ as well as $\gamma$ invariant. Moreover, there exist generators $\xi_1$, $\xi_2$ of the limiting action such that $T=a^i \xi_i $ where $a^i\in\mathbb{R}$ and
\begin{equation}
|\mathring{\nabla}^l \left((\varphi^{-1})_{*}\eta_i -\xi_i \right)|_{b}=O(r^{\frac{1}{2}-\kappa-l}),\quad\quad
|\mathring{\nabla}^l \left((\varphi^{-1})_{*}(a^i\eta_i) -T \right)|_{b}=O(r^{-\frac{1}{2}-\kappa-l}),
\end{equation}
for $l=0,1$, $i=1,2$.
\end{enumerate}
\end{definition}

\begin{remark}
The compatibility condition (i) for ALE manifolds is actually a consequence of the first statement in Definition \ref{toricdef} concerning the existence of a torus action \cite{JaraczKhuri}. Furthermore as explained in Section \ref{sec2} below, in both settings the toric condition implies that the radial cross-section $\mathcal{S}$ must be a lens space, or alternatively in the ALF case, $S^1 \times S^2$.
\end{remark}

Toric symmetries play an important role in the study of gravitational instantons \cites{BiquardGauduchon,MingyangLi}, and have been used for AE mass lower bounds \cites{AKK1,AKK2,AKK3,AlaeeYau}. In a dramatic recent development Li-Sun \cite{LiSun} have discovered toric AF instantons on infinitely many new diffeomorphism types of 4-manifolds, which are not locally Hermitian. It is then natural to consider positive mass theorems in the ALE and ALF settings which assume this symmetry, and to expect that toric instantons are featured in an essential way. To this point, we recall that among AE manifolds with zero scalar curvature the critical points of the mass are Ricci flat \cite[Proposition 7.1]{CBFM}, and note that the same conclusion holds in the ALE and ALF contexts. Therefore, the search for a global minimizer of the mass among such manifolds (or more generally those with nonnegative scalar curvature) leads inevitably to gravitational instantons. In fact, loosely speaking, an indication as to why the positive mass theorem fails in the ALE and ALF settings and on the other hand is valid in the AE setting, is that there are many Ricci flat manifolds with the same ALE/ALF structure (possibly with conical singularities) but there is only one Ricci flat AE manifold, namely Euclidean space. 

The variational approach suggests that a meaningful replacement for the positive mass theorem in the current setting should take the form of a rigid mass comparison result between ALE and ALF manifolds of nonnegative scalar curvature, and certain gravitational instantons. In order to realize such a concept, it is necessary to have a mechanism to produce a large variety of gravitational instantons with prescribed structure. Toric symmetry provides a robust solution to this problem by reducing the Ricci flat equations to an axisymmetric harmonic map into the hyperbolic plane with prescribed singularities. More precisely, it follows from \cites{KKRW,KunduriLucietti,LiSun, Kunduri:2026xvc} that given a rod data set 
--- an embellished orbit space boundary that characterizes the toric action --- 
and the ALE/ALF asymptotic structure, there exists a unique corresponding harmonic map giving rise to a toric gravitational instanton admitting these properties; the method is explained in Section \ref{secexistence}. We refer to this instanton as an \textit{equilibrium geometry}, and note that it typically will have conical singularities on the axes, see Section \ref{sec2}. For this reason, our main result below is naturally stated with conical angle defects. As described in more detail in the next section, a rod data set consists of a collection of intervals called axis rods whose union is the boundary of the orbit space $M/T^2$, and an associated collection of `weights' that detail the degeneration of the torus action; the intervals are parameterized by a coordinate $z$.

\begin{theorem}\label{main.theorem}
Let $(M,g)$ be a simply connected toric ALE, toric ALF-$A_k$, or toric AF$_{\beta\ell}$ manifold with nonnegative scalar curvature, possibly having conical singularities and corners\footnote{See Definition \ref{conedef}.} along finite axes. Consider the corresponding toric gravitational instanton $(M,g_o)$ sharing the same asymptotic ALE or ALF structure, and the same rod data set consisting of intervals 
$\{\Gamma_n\}_{n=1}^{N+1}$. Then
\begin{equation}\label{PMTinequality}
\mathrm{mass}_b(M,g)- \mathrm{mass}_b(M,{g}_o)\geq 2\pi\sum_{n=1}^{N+1}\int_{\Gamma_n}(\pmb{\vartheta}^n -\pmb{\vartheta}^n_o) dz,
\end{equation}
where $\pmb{\vartheta}^n$ and $\pmb{\vartheta}^n_o$ are the logarithmic angle defects on axis rod $\Gamma_n$ with respect to $g$ and $g_o$, respectively. Furthermore, equality holds if and only if $(M,g)$ is isometric to the Ricci flat equilibrium geometry $(M,g_o)$.
\end{theorem}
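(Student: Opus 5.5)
Following the toric mass lower bound method developed for the AE case in \cites{AKK1,AKK2,AKK3}, I would reduce the inequality to a convexity statement about harmonic maps into the hyperbolic plane.

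\textbf{Step 1: coordinates and reduction.} Since $M$ is simply connected and the $T^2$ action is effective and isometric, the orbit space $M/T^2$ is a half-plane. As in the AE setting, nonnegative scalar curvature together with the toric structure gives global Weyl--Papapetrou type coordinates. The metric then takes the form
\begin{equation*}
g = \frac{e^{2U+2\alpha}}{\rho}\left(d\rho^2+dz^2\right)+\rho\, e^{2U}\,\tilde h_{ij}\, d\phi^i d\phi^j .
\end{equation*}
Here $\tilde h$ is a unimodular matrix, which defines a map $\tilde\Phi$ into $SL(2,\mathbb{R})/SO(2)\cong\mathbb{H}^2$. The functions $U$ and $\alpha$ measure the deviation of the area density from the harmonic function $\rho$.

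Using the structure from Section \ref{sec2}, I would derive a mass formula. The goal is an identity expressing $\mathrm{mass}_b(M,g)$ as three contributions:
\begin{itemize}
\item the reduced harmonic energy $\mathcal{I}(\tilde\Phi)=\frac{1}{8}\int |\nabla\tilde h\,\tilde h^{-1}|^2$, suitably renormalized;
\item a nonnegative bulk term $\int R_g\, e^{2U+2\alpha}\,dx$ plus $|\nabla U|^2$-type terms;
\item boundary terms on the axes.
\end{itemize}
The boundary terms come from integrating $\Delta\alpha$ by parts, which yields $2\pi\sum_n\int_{\Gamma_n}\pmb{\vartheta}^n\,dz$. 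This is exactly how the logarithmic angle defect enters, since the cone angle along $\Gamma_n$ is encoded by the boundary value of $\alpha$ together with $\tilde h$ and the rod weights. The same identity applied to $g_o$ holds with no bulk term, because $g_o$ is Ricci flat, $U_o=0$, and $\tilde\Phi_o$ is harmonic.

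Subtracting the two identities gives
\begin{equation*}
\mathrm{mass}_b(M,g)-\mathrm{mass}_b(M,g_o)-2\pi\sum_n\int_{\Gamma_n}(\pmb{\vartheta}^n-\pmb{\vartheta}^n_o)\,dz \;\geq\; c\left(\mathcal{I}(\tilde\Phi)-\mathcal{I}(\tilde\Phi_o)\right).
\end{equation*}

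\textbf{Step 2: harmonic map gap.} It remains to show $\mathcal{I}(\tilde\Phi)\ge\mathcal{I}(\tilde\Phi_o)$. Both maps share the same rod data and asymptotics, so they have the same singular behavior on the axes and at infinity. I would use the geodesic homotopy $\Phi_t$ in $\mathbb{H}^2$ joining $\tilde\Phi_o$ to $\tilde\Phi$. Nonpositive curvature of the target gives convexity of $t\mapsto \mathcal{I}(\Phi_t)$. Harmonicity of $\tilde\Phi_o$ makes the first variation at $t=0$ vanish, provided boundary terms at the axes and infinity drop out.

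Controlling those boundary terms is the main obstacle. The maps blow up on the axes and have only ALE/ALF-rate convergence at infinity, so the renormalized energy requires cut-offs near the rods, the corners and infinity. I would try first to establish that the distance $d_{\mathbb{H}^2}(\tilde\Phi,\tilde\Phi_o)$ is bounded and decays suitably. This should follow from the matching rod weights and the asymptotic estimates of Definition \ref{toricdef}. With that in hand, cut-off arguments as in \cites{AKK1,AKK2} should kill the flux terms. The ALF case is more delicate here, because of the $r^{-1/2-\kappa}$ decay and the twisting in AF$_{\beta\ell}$.

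\textbf{Step 3: rigidity.} In the equality case every discarded term vanishes. First, $R_g=0$ and $U=0$. Next, equality in the convexity argument forces $\tilde\Phi=\tilde\Phi_o$, by strict convexity along nonconstant geodesic homotopies. Finally, $\alpha$ is determined by $\tilde h$ through line integrals of its quadrature equations, so $\alpha=\alpha_o$ and hence $g=g_o$.
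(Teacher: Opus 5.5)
Your architecture matches the paper's: a scalar curvature identity in Brill coordinates, a renormalized $\mathbb{H}^2$ energy compared with the harmonic map of $g_o$, and convexity along the geodesic homotopy with cut-offs near rods, corners and infinity. The axis flux gives the angle defects and the flux at infinity gives the mass difference. The only organizational difference is that the paper uses one identity for the relative energy $\mathcal{I}(\Psi)$ instead of subtracting two absolutely renormalized formulas.

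\textbf{Gap in rigidity.} The genuine gap is the last step of your rigidity argument. The first-order quadrature equations that determine $\alpha$ from the fiber metric are the orbit-space components of $\mathrm{Ric}(g)=0$. Ricci flatness of $g$ is what you are trying to prove, so invoking them is circular. In the equality case you only know $R_g=0$, $U=0$ and $\tilde\Phi=\tilde\Phi_o$, hence $G=G_o$. The scalar curvature formula then gives a single equation, $\Delta_2(\alpha-\alpha_o)=0$ on the half-plane, and this alone does not force $\alpha=\alpha_o$. By \eqref{regphi}, the boundary value of $\alpha-\alpha_o$ on a finite rod is $\pmb{\vartheta}^n-\pmb{\vartheta}^n_o$, which is not known to vanish. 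Poisson integrals of data supported inside the finite rods are bounded harmonic functions that vanish on the semi-infinite rods and at the corners and decay at infinity.

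What excludes them is a Neumann-type condition coming from the cone asymptotics \eqref{alphaax}: $\partial_\rho(\alpha-\alpha_o)\to 0$ at interior points of every rod. The paper combines this with $\alpha=\alpha_o$ on the semi-infinite rods (no defects there, by \eqref{regphi}), $\alpha=\alpha_o$ at corners (by \eqref{alphar}), and $\alpha-\alpha_o\to 0$ at infinity. A positive maximum of $\alpha-\alpha_o$ would then sit at an interior point of a finite rod. There the Hopf lemma gives $\partial_\rho(\alpha-\alpha_o)<0$, contradicting the Neumann condition; a negative minimum is excluded the same way.

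\textbf{Missing twist terms.} A second, repairable omission is the twist. A toric metric with $R_g\ge 0$ need not have integrable orthogonal distribution, and nonnegative scalar curvature plays no role in producing Brill coordinates. The correct form \eqref{m1} contains the connection forms $A^i_a$, which add $-\frac14 e^{-2\alpha}\delta_3^{ac}\delta_3^{bd}G_{ij}F^i_{ab}F^j_{cd}$ to $e^{2\alpha}R$. This term has the right sign to join your nonnegative bulk terms, but you still need to:
\begin{enumerate}
\item check that it is integrable;
\item check that the $A^i_a$ do not contribute to the flux at infinity (the $g_{\theta r}$ terms in the mass density);
\item in the equality case, extract $F^i=0$ and use Frobenius to pass to coordinates with $A^i_a=0$, before the comparison of $\alpha$ with $\alpha_o$ above makes sense.
\end{enumerate}
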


The precise meaning of conical singularities in the context of toric ALE/ALF manifolds, as well as their logarithmic angle defects, will be given in the next section. When conical singularities are not present, the inequality \eqref{PMTinequality} simply states that the mass of the given ALE/ALF manifold is bounded below by the mass of its corresponding instanton equilibrium geometry. Even in the general case when conical singularities are present, we are motivated to define a new mass 
\begin{equation}
\mathbf{mass}_b(M,g):=\mathrm{mass}_b(M,g)-2\pi\sum_{n=1}^{N+1}\int_{\Gamma_n}\pmb{\vartheta}^n dz,
\end{equation}
and again the result yields the simple statement that $\mathbf{mass}_b(M,g)\geq\mathbf{mass}_b(M,g_o)$. In this way, up to multiplication by $2\pi$, the total logarithmic angle defect along $\Gamma_n$ may be viewed as the mass of the rod, or rather the mass of the corresponding totally geodesic 2-sphere lying within $M$. As is shown in Section \ref{sec8}, these contributions are responsible for and explain the negative mass present in the Reissner-Nordstr\"{o}m manifolds, since the equilibrium geometry associated with these examples must possess conical defects.
It should also be noted that Theorem \ref{main.theorem} will continue to hold if conical singularities are present on the semi-infinite axes, as long as the difference $\pmb{\vartheta}^n -\pmb{\vartheta}^n_o$ is integrable on such rods. Moreover, the difference of masses on the left-hand side of \eqref{PMTinequality} is equivalent to $\mathrm{mass}_{g_o}(M,g)$, the mass of $g$ with respect to the Ricci flat background $g_o$, and thus the main inequality may then be viewed as giving a sharp lower bound for this interpretation of mass. In fact, the concept of using Ricci flat backgrounds to define the mass in ALF contexts has previously been put forward by Kim-Ozuch \cite[Introduction]{KO}. Finally we mention that in the ALE case, it follows from Bando-Kasue-Nakajima \cite{bando1989construction} that the instanton metric will fall-off at order 4, which implies that $\mathrm{mass}_b(M,{g}_o)=0$ and hence simplifies the inequality \eqref{PMTinequality}.

In order to illustrate a delicate aspect of this theorem, and the necessity of including some hypothesis beyond fixing the asymptotic structure, we may compare with the stability result of Dahl-Kr\"{o}ncke \cite[Theorem 1.8]{DahlKroncke}. Recall that an open Einstein manifold is called \textit{linearly unstable} if the linearized Ricci operator, restricted to compactly supported transverse-traceless tensors, has a negative bottom of the spectrum. When this occurs for a gravitational instanton, Dahl-Kr\"{o}ncke show that there exist compactly supported perturbations of the instanton metric which have nonnegative scalar curvature that is not identically zero. This perturbation may then be conformally changed back to zero scalar curvature, as in Schoen-Yau's \cite{schoen-yau1979} approach to the AE positive mass theorem, while preserving the asymptotics and resulting in a smaller mass than the original instanton. Thus, if applied to a Schwarzschild AF instanton $(\mathbb{R}^2 \times S^2,g_{sc})$ which is linearly unstable \cite[Example 1.13]{DahlKroncke}, while taking care to preserve toric symmetry in the deformation, we obtain a new toric AF manifold $(\mathbb{R}^2 \times S^2,\tilde{g}_{sc})$ of nonnegative scalar curvature with $\mathrm{mass}_b(\mathbb{R}^2 \times S^2,\tilde{g}_{sc})<\mathrm{mass}_b(\mathbb{R}^2 \times S^2,g_{sc})$. This appears to violate inequality \eqref{PMTinequality}, if we naively use the original Schwarzschild instanton as the equilibrium geometry. However, in order to apply Theorem \ref{main.theorem}, the equilibrium geometry must be chosen to have the same rod data set as the perturbation. Ultimately, the deformation disturbs the rod lengths, so that a new Schwarzschild instanton with different mass must be used for the comparison.

This paper is organized as follows. In Section \ref{sec2} we derive consequences of the toric action, and analyze the asymptotic model geometries. In Section \ref{sec3}, scalar curvature identities are exploited to obtain a relation between a reduced harmonic energy and certain flux integrals. Convexity properties of the reduced energy are studied in Section \ref{sec4}, and then used to produce a gap lower bound. Section \ref{sec5} is dedicated to showing that the flux integrals yield the desired difference of masses, while the main theorem is proved in Section \ref{sec6}. Asymptotics at infinity, the axes, and corners are derived in Section \ref{sec7:asymptotics}.
Finally, several examples are detailed in Section \ref{sec8} and an appendix is included to record miscellaneous calculations and formulae. 

\section{Background and Setup} 
\label{sec2} \setcounter{equation}{0}
\setcounter{section}{2}

Let $(M,g)$ be a simply connected toric ALE or toric ALF manifold. It follows from \cite{OrlikR} and the proof of \cite[Proposition 3]{HollandsY} that the orbit space $M/T^2$ is diffeomorphic to a half-plane $\{(\rho,z)\mid \rho\geq 0, z\in\mathbb{R}\}$, with certain `weights' embellishing the boundary. More precisely, the $z$-axis $\Gamma$ is decomposed into an exhaustive sequence of closed intervals referred to as \textit{axis rods} and denoted by
\begin{equation}
\Gamma_1 = (-\infty,z_1], \quad \Gamma_2 = [z_1,z_2], \text{ }\dots \text{ }\Gamma_{N} = [z_{N-1}, z_{N}], \quad \Gamma_{N+1} = [z_{N},\infty),
\end{equation}
where $z_n < z_{n+1}$, such that the interior of each $\Gamma_n$ corresponds to points in $M$ with 1-dimensional isotropy subgroup. The intersection point of two adjacent axis rods is called a \textit{corner} and represents a point in $M$ with 2-dimensional isotropy subgroup, whereas all interior points of the half-plane correspond to principal orbits. Note that in the gravitational instanton literature, corners are referred to as \textit{nuts} and the collection of orbits over an axis rod is referred to as a \textit{bolt}; these latter objects are totally geodesic 2-spheres in $M$ that represent the generators of its second homology.

Let $\eta_1$, $\eta_2$ denote Killing field generators of the $T^2$-action and consider the Gram matrix $G$ with components $G_{ij}=g(\eta_i, \eta_j)$. Associated with each $\Gamma_n$ is an element $\mathbf{v}_n=(v_n^1 , v_n^2)\in \mathbb{Z}^2$ called a \textit{rod structure}, whose components are relatively prime, and which generates the kernel of $G$ on this rod or equivalently the Killing field $v_n^1 \eta_1 +v_n^2 \eta_2$ vanishes on $\Gamma_n$. The collection $\mathcal{R}=\{(\mathbf{v}_n,\Gamma_n)\}_{n=1}^{N+1}$ of axis rods and their rod structures is referred to as the \textit{rod data set}, and completely encodes the topology of $M$, see \cite{KMWY} for further discussion. For instance, by examining the torus fibration over a semi-circle in the orbit space that connects the two semi-infinite rods $\Gamma_{1}$ and $\Gamma_{N+1}$, we find that the only allowable cross-sectional topologies \cite[Proposition 2]{HollandsY} for the asymptotic end of $M$ are $S^1 \times S^2$ and the lens spaces $L(p,q)$; these two cases occur for the pair of rod structures $\{\left((1,0),\Gamma_1\right),\left((1,0),\Gamma_{N+1}\right)\}$ and $\{\left((1,0),\Gamma_1\right),\left((q,p),\Gamma_{N+1}\right)\}$, respectively. Moreover, it will be assumed that any rod data set satisfies the \emph{admissibility condition} at corner points:
\begin{equation}
\det \begin{pmatrix} v_n^1 & v_n^2 \\ v_{n+1}^1 & v_{n+1}^2 \end{pmatrix} = \pm 1. 
\end{equation} 
This condition preserves the manifold structure in a neighborhood of corner points. Without it, such neighborhoods admit an orbifold structure, and although our results and proofs should continue to hold in that situation we will not pursue this direction here.

Conical singularities and corners arise naturally in this setting. In particular, let $\pi:M\rightarrow M/T^2$ be the quotient map, then conical singularities can occur along the axes or rather the 2-sphere bolts $\pi^{-1}(\Gamma_n)$. Consider a model cone metric on $D^2 \times S^1 \times (0,1)$ given by
\begin{equation}\label{conem}
g_{\mathrm{cone}}=ds^2 +c^2 s^2 (d\psi^1 +a d\psi^2)^2 +g_{\mathrm{cyl}}(y),
\end{equation}
where $c=c(t)>0$ and $a=a(t)$ are smooth functions, $(s,\psi^1)$ are polar coordinates on the open unit disk $D^2$, and $g_{\mathrm{cyl}}(y)$ is a metric on the cylinder $(0,1)\times S^1$ parameterized by coordinates $y=(t,\psi^2)$. Here $\psi^1,\psi^2$ are $2\pi$-periodic and their coordinate vector fields generate a $T^2$ action by isometries. Note that $s=0$ corresponds to the axis, and that $c=e^{-\pmb{\vartheta}}$ where $\pmb{\vartheta}$ is the \textit{logarithmic angle defect} of each cone along the axis. Consider also a flat model corner metric on a 4-dimensional ball $B_1$ in polar-Hopf coordinates $(r,\theta,\psi^1, \psi^2)$ given by
\begin{equation}\label{metriccornercone}
g_{\mathrm{corner}}=dr^2 +r^2 \left(d\theta^2 +c_1^2\sin^2 \theta (d\psi^1)^2 +c_2^2 \cos^2 \theta (d\psi^2)^2 \right),
\end{equation}
where $c_1 , c_2 \in (0,\infty)$ are constants, and $r\in[0,1)$, $\theta\in[0,\pi/2]$, while $\psi^1, \psi^2$ are again $2\pi$-periodic. The values $c_1$ and $c_2$ yield angle defects of the neighboring axes in the usual way.

\begin{definition}\label{conedef}
We say that a toric ALE or toric ALF manifold $(M,g)$ possesses \textit{conical singularities and corners} if the metric is globally $L^{\infty}$ and smooth away from the axes $\pi^{-1}(\Gamma)$, with the following two types of model asymptotics.
\begin{enumerate}[label=(\roman*)]
\item At each point of any bolt $\pi^{-1}(\mathrm{int }\text{ }\! \Gamma_n)$ there exists a neighborhood of the form $D^2 \times S^1 \times (0,1)$ with coordinates $(s,\psi^1,y)$ and an associated cone metric $g_{\mathrm{cone}}$ such that after pullback
\begin{equation}
|\mathring{\nabla}^l (g-g_{\mathrm{cone}})|_{g_{\mathrm{cone}}}=O(s^{1+\zeta-l}),\quad\quad l=0,1,
\end{equation}
for some $\zeta> 0$ where $\mathring{\nabla}$ denotes covariant differentiation with respect to $g_{\mathrm{cone}}$.

\item At each corner (nut) point $\pi^{-1}(\Gamma_{n}\cap\Gamma_{n+1})$ there exists a 4-ball neighborhood with coordinates $(r,\theta,\psi^1,\psi^2)$ and an associated corner metric $g_{\mathrm{corner}}$ such that after pullback
\begin{equation}
|\mathring{\nabla}^l (g-g_{\mathrm{corner}})|_{g_{\mathrm{corner}}}=O(r^{2-l}),\quad\quad l=0,1,
\end{equation}
where $\mathring{\nabla}$ denotes covariant differentiation with respect to $g_{\mathrm{corner}}$.
\end{enumerate}
In both cases, after a pushforward, the coordinate vector fields $\partial_{\psi^i}$, $i=1,2$ correspond to generators of
the $T^2$ action on $M$.
\end{definition}

\subsection{Asymptotic model geometries}\label{modelgeom}
The toric hypothesis places strong restrictions on the asymptotic model geometries of ALE and ALF manifolds. In fact we will show that the model metric $b$ can be assumed to take an explicit form, which falls into one of three types that we now describe.

\subsubsection{Asymptotically locally Euclidean (ALE)}
Consider the asymptotic end $(\mathbb{R}^4 \setminus \overline{B}_1)/\mathbb{Z}_p =(1,\infty)\times L(p,q)$ where $p,q\in\mathbb{Z}$ are relatively prime with $p\geq 1$, equipped with the flat metric
\begin{equation}\label{ALEmodel}
b_{\text{ALE}} = d r^2 + r^2 \left[d\theta^2 + p^{-2}\sin^2\theta (d\psi^1)^2 + \cos^2\theta \left( d\psi^2 + p^{-1}q d\psi^1\right)^2 \right], 
\end{equation} 
in which $r>1$ and $(\theta,\psi^1,\psi^2)$ are Hopf coordinates on the lens space with $\theta\in[0,\pi/2]$ and $\psi^1, \psi^2$ are $2\pi$-periodic. Here the toric symmetry is generated by the Killing fields $\partial_{\psi^1}$, $\partial_{\psi^2}$, and the semi-infinite rod structures are given by $\mathbf{v}_1 =(0,1)$ and $\mathbf{v}_{N+1}=(p,-q)$.

\subsubsection{Asymptotically locally flat (ALF-$A_{k-1}$)} \label{2.1.2}
Let $k\in\mathbb{Z}_{+}$ and consider the asymptotic end $\mathbb{R}_+ \times L(k,1)$ equipped with the metric 
\begin{equation}\label{bALF}
b_{\text{ALF}} = \ell^2 \left(d \psi^2 + k\cos^2\left(\theta/2\right) d \psi^1 \right)^2 + d r^2 + r^2 \left(d \theta^2 + \sin^2\theta (d\psi^1)^2 \right), 
\end{equation} 
where $r>0$, $\theta \in [0,\pi]$, and $\psi^1, \psi^2$ are $2\pi$-periodic. The induced metric on radial level sets exhibits the lens space as an $S^1$-bundle over the 2-sphere with Euler number $e=-k$, and thus this model geometry is associated with type ALF-$A_{k-1}$. By multiplying the second and third terms of \eqref{bALF} by $h(r)=1 + \frac{k \ell}{2r}$ and multiplying the first term by $h(r)^{-1}$ as in \eqref{hmetrics} one obtains a new metric $\tilde{b}_{ALF}$ which is Ricci flat, and coincides in the case of $k=1$ with the Taub-NUT gravitational instanton on $\mathbb{R}^4$ after adding the origin point $r=0$.  The toric symmetry is again generated by the Killing fields $\partial_{\psi^1}$, $\partial_{\psi^2}$, and the semi-infinite rod structures are given by $\mathbf{v}_1 =(1,0)$ and $\mathbf{v}_{N+1}=(-1,k)$. Note that the metric \eqref{bALF} may be placed into the context of 
Definition \ref{def:ALF} (iv) by setting 
\begin{equation}
T =  \ell^{-1}\partial_{\psi^2}, \qquad \tau = \ell \left(d\psi^2 + k\cos^2 \left(\theta/2 \right) d\psi^1\right),
\qquad \gamma = d\theta^2 + \sin^2\theta (d\psi^1)^2.
\end{equation} 
We may also consider the case when $k=p/q$ for relatively prime positive integers $p,q$. In this situation the radial level sets have topology $L(p,q)$, however the metric $b_{ALF}$ admits conical singularities on the axis rod $\Gamma_{N+1}$ when $q\neq 1$.

\subsubsection{Asymptotically flat (AF$_{\beta\ell}$)} 
Consider the asymptotic end $\mathbb{R}_+ \times S^1 \times S^2$ equipped with the flat metric 
\begin{equation}\label{bAF}
b_{\text{AF}} = \ell^2(d \psi^2)^2 + d r^2 + r^2 \left(d \theta^2 +  \sin^2 \theta (d \psi^1 + \beta \ell d \psi^2)^2 \right),
\end{equation} 
where $\ell>0$, $\beta\geq 0$ are constants and $r>0$, $\theta\in[0,\pi]$, and $\psi^1, \psi^2$ are $2\pi$-periodic. The radial level sets are topologically $S^1 \times S^2$ as realized by the rod structures $\mathbf{v}_1 =(1,0)$ and $\mathbf{v}_{N+1}=(1,0)$
on the semi-infinite rods, associated with the toric symmetry generated by the Killing fields $\partial_{\psi^1}$, $\partial_{\psi^2}$. This model geometry is of the type AF$_{\beta \ell}$. Note that the metric \eqref{bAF} may be placed into the context of 
Definition \ref{def:ALF} (iv) by setting 
\begin{equation}\label{fonapoitng}
T =  \ell^{-1}\left(\partial_{\psi^2}-\beta\ell \partial_{\psi^1}\right), \qquad \tau =\ell d\psi^2,
\qquad \gamma = d\theta^2 + \sin^2\theta (d \psi^1 + \beta\ell d \psi^2)^2 .
\end{equation} 
Observe that $T$ has closed orbits if and only if $\beta \ell$ is rational.

\begin{remark}\label{faihjfpjaoh}
We have chosen to distinguish the AF and ALF cases since many explicit examples fall into one of these two classes described above. However, they can be treated together as members of a larger toric family of Ricci flat geometries, possibly with conical singularities. Namely, using the previous notation consider the following metric 
\begin{equation}\label{hmetrics}
\tilde{b} =h^{-1}\ell^2 \left(d \psi^2 + k \cos^2\left(\theta/2\right) (d\psi^1+\beta\ell d\psi^2) \right)^2 +  h\left(d r^2 + r^2 \left(d \theta^2 + \sin^2\theta (d\psi^1+\beta\ell d\psi^2)^2\right)\right), 
\end{equation} 
where $h(r)$ is the radial function from Section \ref{2.1.2}. If $k=0$ this reduces to the setting of $b_{AF}$, so assume that $k>0$. In order to have the structure of a manifold in the vicinity of rod $\Gamma_{N+1}$, we require that $k^{-1}(1+k\beta\ell)=\frac{q}{p}$ for some relatively prime integers $p\neq 0$ and $q$. In this case, the semi-infinite rod structures are $\mathbf{v}_1 = (1,0)$ and $\mathbf{v}_{N+1} = (-q, p)$, and the topology of the asymptotic end on which this metric is defined is given by $\mathbb{R}_+\times L(p,q)$. Moreover, conical singularities occur on $\Gamma_{N+1}$ unless $k=p$ and $1+k\beta\ell=q$. 
\end{remark}

\begin{remark}\label{faihjfpjaoh1}
When $h$ is replaced by $1$ in \eqref{hmetrics}, we will refer to the resulting metric as $\tilde{b}_{ALF}$.  Although the results of this paper continue to hold for the more general model metric $\tilde{b}_{ALF}$, for simplicity of exposition we will mostly restrict attention to the less embellished version $b_{ALF}$ from \eqref{bALF}.  
\end{remark}

We will now show that in the toric setting, the model geometries must take one of the above three forms up to negligible error.

\begin{proposition}
Let $(M,g)$ be a toric ALE or ALF manifold.
\begin{enumerate}[label=\normalfont(\roman*)]
\item In the ALE case, the asymptotic model geometry is of the form $(\mathbb{R}_+\times L(p,q),b)$ for some relatively prime integers $p\geq 1$, $q$, and there exists an explicit model metric \eqref{ALEmodel} such that
\begin{equation}
|\mathring{\nabla}^{l}(b-b_{ALE})|_{b_{ALE}}=O(r^{-2-l}),\quad\quad l=0,1,2,
\end{equation}
where $\mathring{\nabla}$ denotes covariant differentiation with respect to $b_{ALE}$. 

\item In the ALF/non-AF case, the asymptotic model geometry is of the form $(\mathbb{R}_+\times L(p,q),b)$ for some relatively prime positive integers $p$, $q$, the bounded Killing field $T$ has closed orbits, and there exists an explicit model metric from Remark \ref{faihjfpjaoh1} with $k=p$ and $1+p\beta\ell=q$ such that
\begin{equation}
|\mathring{\nabla}^{l}(b-\tilde{b}_{ALF})|_{\tilde{b}_{ALF}}=O(r^{-1-l}),\quad\quad l=0,1,2,
\end{equation}
where $\mathring{\nabla}$ denotes covariant differentiation with respect to $\tilde{b}_{ALF}$. 

\item In the AF case, the asymptotic model geometry is of the form $(\mathbb{R}_+\times S^1 \times S^2,b)$, the bounded Killing field $T$ may not have closed orbits, and there exists an explicit model metric \eqref{bAF} such that
\begin{equation}
|\mathring{\nabla}^{l}(b-b_{AF})|_{b_{AF}}=O(r^{-1-l}),\quad\quad l=0,1,2,
\end{equation}
where $\mathring{\nabla}$ denotes covariant differentiation with respect to $b_{AF}$.
\end{enumerate}
\end{proposition}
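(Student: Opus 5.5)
The plan is to use the limiting torus action from Definition \ref{toricdef}. In each case it acts isometrically on the model end $b$ and preserves radial cross-sections, so it is enough to classify $T^2$-invariant model metrics of the stated types. After that, compare the model metric $b$ with an explicit representative $b_{ALE}$, $\tilde b_{ALF}$ or $b_{AF}$, and show that the difference is either zero or a harmless lower order term.

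\textbf{ALE case.} The model is the flat cone over $(\mathcal{S},b_s)$, with $\mathcal{S}=S^3/\mathcal{G}$ carrying its round metric. By the remark following Definition \ref{toricdef}, $\mathcal{G}$ commutes with an effective $T^2$ in $O(4)$, namely the limiting action.
\begin{enumerate}[label=(\alph*)]
\item Any $T^2\subset O(4)$ is conjugate to the standard maximal torus $U(1)\times U(1)$ acting on $\mathbb{C}^2$.
\item A finite subgroup acting freely on $S^3$ and commuting with this maximal torus lies in the torus itself.
\item Freeness then forces $\mathcal{G}$ to be cyclic, generated by $(z_1,z_2)\mapsto(e^{2\pi i/p}z_1,e^{2\pi iq/p}z_2)$. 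Hence $\mathcal{S}=L(p,q)$.
\item Writing $z_1=r\sin\theta e^{i\phi_1}$ and $z_2=r\cos\theta e^{i\phi_2}$, the quotient Euclidean metric becomes exactly \eqref{ALEmodel}. The change of angles is $\psi^1=p\phi_1$, $\psi^2=\phi_2-(q/p)\phi_1$, with the choices arranged so that $\partial_{\psi^i}$ are $2\pi$-periodic generators.
\end{enumerate}
Thus in the ALE case $b$ equals $b_{ALE}$ exactly after a diffeomorphism, which is stronger than the stated $O(r^{-2-l})$ bound.

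\textbf{ALF/AF case.} Write $b=dr^2+r^2\gamma+\ell^2\tau^2$, with $T=a^i\xi_i$ lying in the torus Lie algebra.
\begin{enumerate}[label=(\alph*)]
\item The leaf space of $T$, where it is an orbifold, carries the metric induced by $\gamma$. It has curvature $+1$ and a residual isometric circle action from $T^2/\overline{\{e^{tT}\}}$. So it is a round $S^2$, or a football quotient, with axisymmetric coordinates giving $\gamma=d\theta^2+\sin^2\theta(d\phi)^2$.
\item Since $\mathcal{L}_T\tau=0$, $\iota_T\tau=1$, and $\tau$ is $T^2$-invariant, $d\tau$ is basic. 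It therefore descends to an invariant 2-form $F$ on the leaf space.
\item By invariance $F=f(\theta)\sin\theta\, d\theta\wedge d\phi$, and $dF=0$ is automatic. Harmonicity is not given, so $f$ need not be constant a priori.
\item Compare with the model $k\cos^2(\theta/2)d\phi$ having constant curvature density. The difference $\tau-\tau_{model}$ is a closed-up-to-exact invariant form on the leaf space, that is an exact form $d\chi(\theta)$.
\item A reparametrization of the fiber coordinate, $\psi^2\mapsto\psi^2+\chi(\theta)$, then places $\tau$ in the form of \eqref{hmetrics} with $h=1$.
\item The constants $k$ and $\beta\ell$ are read off from the Euler number of the circle bundle, that is the integral of $F$, and from the rotation of $T$ relative to the lattice of closed orbits.
\end{enumerate}
When $k=0$ we obtain $S^1\times S^2$ and $b_{AF}$, possibly with $T$ irrational. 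When $k\neq0$, the integrality required for a smooth lens space end (Remark \ref{faihjfpjaoh}) forces $k=p$, $1+p\beta\ell=q$, and closed orbits of $T$. Any residual $O(1)$ freedom in $\gamma,\tau$ that I cannot absorb exactly will be absorbed through the factor $h$ versus $1$, or through $r\mapsto r+c$. These produce differences of order $r^{-1}$ relative to the model, and derivatives gain $r^{-1}$ each, which gives the stated $O(r^{-1-l})$.

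\textbf{Main obstacle.} The hardest step is the ALF classification, specifically showing that $f$ may be taken constant, or reduced to constant by a diffeomorphism. This must hold while keeping the fibers $2\pi$-periodic and not destroying the decay \eqref{ALFdecay}. My first attempt is a de Rham cohomology argument on the leaf space: invariant forms with the same total integral differ by $d$ of an invariant function. The remaining check is that the resulting coordinate change is $T^2$-equivariant and bounded, so that the decay is preserved.
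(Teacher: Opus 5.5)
Your ALE argument is sound and gives more than the statement asks. Since $b$ is by definition the flat cone over $S^3/\mathcal{G}$, and the centralizer of a maximal torus in $O(4)$ is that torus, $\mathcal{G}$ is cyclic in $U(1)\times U(1)$ and $b$ equals \eqref{ALEmodel} exactly; the paper only says this case is ``similar''. There is one small slip: the angle change should be $\psi^1=p\phi_1$, $\psi^2=\phi_2-q\phi_1$. With $\phi_2-(q/p)\phi_1$, the generator of $\mathcal{G}$ is not a $2\pi$-translation and the cross coefficient comes out as $q/p^2$.

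The ALF/AF part has a genuine gap at exactly the step you flag, and the proposed de Rham fix is false. Write $\omega=d\psi^1+\beta\ell\,d\psi^2$ and $\tau=d\psi^2+\tau_2(\theta)\,\omega$. There is no $d\theta$-component: the orthogonal complement of the $T^2$-orbits in the $3$-dimensional cross-section is a line field, hence integrable. Moreover, $d\theta$-components are all that a shift $\psi^2\mapsto\psi^2+\chi(\theta)$ could ever change. The discrepancy $\tau-\tau_{\mathrm{model}}=(\tau_2-p\cos^2(\theta/2))\,\omega$ is basic but not closed, since its differential is $F-F_{\mathrm{model}}$, nonzero whenever $f$ is nonconstant. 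Your cohomology step mixes degrees: invariant $2$-forms with equal integral differ by $d$ of an invariant $1$-form, not of a function. So no fiber reparametrization removes this term. A diffeomorphism of the base does not help either. Constancy of $f=F/dA_\gamma$ is preserved by isometries of $\gamma$, and any other diffeomorphism moves $\gamma$ off the round metric, which is an $O(1)$ error against $r^2\gamma$. Hence in general $b$ is not isometric to $\tilde b_{ALF}$ or $b_{AF}$. Neither $h$ (which does not appear in $\tilde b_{ALF}$) nor a shift $r\mapsto r+c$ touches the $\theta$-dependence of $\tau_2$. Also, for irrational $\beta\ell$ the leaf space of $T$ is not an orbifold, so step (a) should be done on $\mathcal{S}/T^2=[0,\pi]$ with an invariant coframe adapted to $T$, as the paper does.

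The missing idea is that no normal form is needed; only the values of $\tau_2$ at the poles matter. The rod structures $\mathbf{v}_1=(1,0)$ and $\mathbf{v}_{N+1}=(-q,p)$ (resp.\ $(-1,0)$), together with regularity of $b$ at $\theta=\pi$ and $\theta=0$, force $\tau_2(\pi)=0$, and $\tau_2(0)=p$ with $1+p\beta\ell=q$ (resp.\ $\tau_2(0)=0$). Then
\[
b-\tilde b_{ALF}=\ell^2\big(\tau_2-p\cos^2(\theta/2)\big)\,\omega\cdot\big(2\,d\psi^2+(\tau_2+p\cos^2(\theta/2))\,\omega\big).
\]
The forms $r\sin\theta\,\omega$ and $\ell\,(d\psi^2+p\cos^2(\theta/2)\,\omega)$ belong to an orthonormal coframe of $\tilde b_{ALF}$, and $\tau_2-p\cos^2(\theta/2)=O(\sin\theta)$. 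Therefore this tensor has $\tilde b_{ALF}$-norm $O(r^{-1})$. Its coefficients depend only on $\theta$, whose direction has length $r$, so each $\mathring{\nabla}$ gains a factor $r^{-1}$; in the AF case replace $p\cos^2(\theta/2)$ by $0$. The linear growth of the base sphere is what turns the nonconstant curvature of $\tau$ into an $O(r^{-1})$ error. That is why the statement claims only $O(r^{-1-l})$ rather than an exact normal form, and it is how the paper argues.
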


\begin{proof}
We will treat the ALF and AF cases here, and simply note that the ALE case may be proved similarly.
According to Definition \ref{toricdef} there is an effective isometric $T^2$ action on the model geometry of the end, which induces a toric symmetry on its radial cross-sections $\mathcal{S}$. By \cite[Section 2]{OrlikR} the orbit space $\mathcal{S}/T^2$ is a closed interval which we may parameterize by $\theta\in[0,\pi]$. Let $\psi^1, \psi^2$ be $2\pi$-periodic coordinates parameterizing the torus fibers, then by expressing the cross-section metrics in Riemannian submersion format we find that
\begin{equation}\label{afonaoirnpioqy}
b|_{\mathcal{S}}=r^2 \gamma +\ell^2 \tau^2 =A(r)d\theta^2 +B_{ij}(r,\theta)d\psi^i d\psi^j,
\end{equation}
for some coefficient functions $A$ and $B_{ij}$ which are independent of the torus coordinates since $\partial_{\psi^1}$, $\partial_{\psi^2}$ generate the toric symmetry. Note also that there are no cross-terms between $\theta$ and $\psi^i$ since the horizontal distribution is integrable. 

The bounded Killing field is a linear combination of the action generators, and thus by
rescaling $\ell$ if necessary, we may assume without loss of generality that $T=a\partial_{\psi^1}+\partial_{\psi^2}$ for some $a\in\mathbb{R}$. Let 
\begin{equation}
\omega^1=d\theta,\qquad \omega^2 =d\psi^1 -a d\psi^2,\qquad \omega^3 =d\psi^2,
\end{equation}
be a co-frame tailored to $T$ in the sense that $\omega^1(T)=\omega^2(T)=0$ and $\omega^3(T)=1$. Then we may write
\begin{equation}
\tau=\sum_{\mathrm{i}=1}^{3}\tau_{\mathrm{i}}(\theta)\omega^{\mathrm{i}},\quad\quad \gamma=\sum_{\mathrm{i,j}=1}^{3}\gamma_{\mathrm{ij}}(\theta)\omega^{\mathrm{i}}\omega^{\mathrm{j}}.
\end{equation}
Since $\tau(T)=1$ and $\gamma(T,\cdot)=0$ we find that $\tau_3 =1$ and $\gamma_{13}=\gamma_{23}=\gamma_{33}=0$.
Next, by inserting the resulting expressions into \eqref{afonaoirnpioqy} we find that $\gamma_{11}$ is constant, and using that there are no cross-terms between $\theta$ and $\psi^i$ on the right-hand side it follows that $\tau_1 =\gamma_{12}=0$.
Moreover, since $\gamma$ locally defines a metric of Gauss curvature $+1$ on the space of leaves of the foliation tangent to $T$,
we conclude that $\gamma_{11}=1$ and $\gamma_{22}=\sin^2 \theta$. By setting $\beta=-\ell^{-1}a$ the model metric may now be expressed as
\begin{equation}
b= dr^2 + r^2 \left(d \theta^2 + \sin^2\theta (d\psi^1+\beta\ell d\psi^2)^2\right)
+\ell^2 \left(d \psi^2 + \tau_2 (d\psi^1+\beta\ell d\psi^2) \right)^2.
\end{equation}

By initially choosing coordinates on the torus appropriately, it may be assumed that the rod structures for the asymptotic end
are given by $\mathbf{v}_1 =(1,0)$ and $\mathbf{v}_{N+1}=(-q,p)$, for two coprime nonnegative integers $p$, $q$. Note that the first rod structure implies $\tau_2(\pi)=0$. If $\mathbf{v}_{N+1}=(-1,0)$ then $\mathcal{S}=S^1 \times S^2$, and also $\tau_2(0)=0$ so that the portion of the metric involving $\tau_2$ may be treated as error to produce $|\mathring{\nabla}^l (b-b_{AF})|_{b_{AF}}=O(r^{-1-l})$ where
\begin{equation}
b_{AF}=dr^2 + r^2 \left(d \theta^2 + \sin^2\theta (d\psi^1+\beta\ell d\psi^2)^2\right)
+\ell^2 (d \psi^2)^2 ;
\end{equation}
this yields case (iii) of the proposition. If $\mathbf{v}_{N+1}\neq (-1,0)$, then as in Remark \ref{faihjfpjaoh} regularity
demands that $p+\tau_2(0)(p\beta\ell-q)=0$ and $1+p\beta\ell =q$. 
It follows that $\tau_2(0)=p$. Hence, treating terms involving $\tau_2 -p\cos^2(\theta/2)$ as error produces $|\mathring{\nabla}^l (b-\tilde{b}_{ALF})|_{\tilde{b}_{ALF}}=O(r^{-1-l})$ where
\begin{equation}
\tilde{b}_{ALF}=d r^2 + r^2 \left(d \theta^2 + \sin^2\theta (d\psi^1 +\beta\ell d\psi^2)^2 \right)
+ \ell^2 \left(d \psi^2 + p\cos^2\left(\theta/2\right) (d \psi^1 +\beta\ell d\psi^2) \right)^2 ;
\end{equation}
this yields case (ii).
\end{proof}

\subsection{Brill coordinates}\label{brill}
Let $(M,g)$ be a simply connected toric ALE or toric ALF manifold, possibly having conical singularities and corners along the axes, and
let $\phi^i$ be a pair of independent $2\pi$-periodic angles adapted to the Killing field generators of the toric action so that $\eta_i = \partial_{\phi^i}$. By \cite{JaraczKhuri} there exists a set of global (Brill) coordinates $(\rho,z,\phi^1,\phi^2)$ for $M$ in which the metric may be expressed in submersion format
\begin{equation}\label{m1}
g = e^{2\alpha}\left(d\rho^2+dz^2\right) + G_{ij} (d \phi^i + A^i_a dx^a)(d \phi^j + A^j_a dx^a),
\end{equation} 
where $(x^1,x^2) = (\rho,z)$. The first portion of \eqref{m1} involving $e^{2\alpha}$ represents the metric on the orbit space $M/T^2$ which is parameterized by the half-plane $\{(\rho,z)\mid \rho\geq 0, z\in\mathbb{R}\}$, while $G=(G_{ij})$ yields the torus fiber metric, and the coefficients $A_a^i$ measure the obstruction to local integrability of the distribution orthogonal to the fibers. All coefficients $\alpha$, $G_{ij}$, and $A_a^i$ are functions of $(\rho,z)$ alone and satisfy the asymptotics as layed out in Section \ref{sec7:asymptotics} for neighborhoods of corner points, axis points, and at infinity. In particular, there exists a model metric from Section \ref{modelgeom} expressed in (radial) Brill coordinates such that
\begin{equation}\label{decay12}
     |\mathring{\nabla}^l(g-b_{ALE})|_{b_{ALE}}=O(r^{-1-\kappa-l}),\quad\quad
     |\mathring{\nabla}^l(g-\tilde{b}_{ALF})|_{\tilde{b}_{ALF}}=O(r^{-\frac{1}{2}-\kappa-l}),\quad l=0,1,2,
\end{equation}
for some $\kappa>0$. Here, the relation between radial and cylindrical Brill coordinates is given by the transformations \eqref{alerho} and \eqref{alfrho}, depending on the asymptotic type of $(M,g)$. Moreover, there exist model metrics $g_{\mathrm{cone}}$ as in Section \ref{sec7.axis}, derived from \eqref{conem} and expressed in Brill coordinates, such that upon approach to the interior of an axis rod
\begin{equation}\label{ccc1}
|\mathring{\nabla}^l (g-g_{\mathrm{cone}})|_{g_{\mathrm{cone}}}=O(\rho^{1+\zeta-l}),\quad\quad l=0,1,
\end{equation}
for some $\zeta> 0$. Similarly, in the neighborhood of a corner point, there exists a model metric of the form \eqref{metriccornercone} expressed in radial Brill coordinates such that 
\begin{equation}\label{cccc}
|\mathring{\nabla}^l (g-g_{\mathrm{corner}})|_{g_{\mathrm{corner}}}=O(r_n^{2-l}),\quad\quad l=0,1,
\end{equation}
where the relation between radial and cylindrical Brill coordinates is given by the transformation \eqref{cornerrho}, and $r_n$ denotes the $g_{\mathrm{corner}}$-distance to the corner point.

The Brill coordinate system gives rise to an advantageous expression for the scalar curvature, which makes contact with a certain harmonic map energy that is fundamental for mass comparison result. Lastly, we note that the logarithmic angle defect at interior points of an axis rod $\Gamma_n$ with rod structure $(v_n^1, v_n^2)$ may be expressed as
\begin{equation}\label{coneangleaxis}
e^{\pmb{\vartheta}}=\lim_{\rho \to 0} \frac{2\pi \cdot \mathrm{Radius}}{\mathrm{Circumference}} = \lim_{\rho \to 0} \sqrt{ \frac{\rho^2 e^{2\alpha}}{G_{ij} v_n^i v_n^j}}.
\end{equation} 
The existence of this limit is a consequence of the asymptotics detailed in Section \ref{sec7:asymptotics}.

\subsection{Toric harmonic maps}\label{secexistence}
In the setting of simply connected toric ALE/ALF manifolds, the Ricci flat equations reduce to solving for an axisymmetric harmonic map (\cite[Section 3]{KunduriLucietti}, \cite[Section 2]{LiSun}, \cite{Lott}) into the hyperbolic plane, $\Phi_o :\mathbb{R}^3 \setminus\Gamma \rightarrow \mathbb{H}^2$. In fact, one may prescribe the desired rod structure
and asymptotic type of the toric gravitational instanton, by solving for a harmonic map that is asymptotic to a given \textit{model map} that realizes this structure. By \textit{asymptotic}, we mean that the hyperbolic distance between the two maps stays bounded globally and converges to zero near infinity. The resulting instanton will most likely have concial singularities for generic rod data sets. The method to establish existence of such a harmonic map, asymptotic to a prescribed model map in this context, is based on an approach initiated by Weinstein
\cite{Weinstein} for 4-dimensional axisymmetric stationary vacuum black holes, and was later developed to incorporate rod structures by Khuri-Weinstein-Yamada \cite{KWY}. The adaptation to the (Riemannian) setting of toric gravitational instantons
was given by Kunduri-Lucietti \cite[Theorem 1.2]{KunduriLucietti} for the AF case, and this was recently expanded and generalized by Li-Sun \cite[Theorem 4.24]{LiSun}. The harmonic maps produced from this process are unique among those asymptotic to the given model. Although the two aforementioned results were carried out in the AF regime, the same technique holds for toric ALE/ALF instantons \cite[Theorem 1.1]{Kunduri:2026xvc}. The only requirement is the ability to construct an appropriate model map, and this may be achieved in the same manner as \cite[Theorem 1.2]{KunduriLucietti} except that at infinity we choose the model map to coincide with the harmonic map arising from the three Ricci flat model geometries of Section \ref{modelgeom}; note that in the ALF case this refers to the metric $\tilde{b}_{ALF}$ which is a modification of \eqref{bALF} using the function $h$. Thus, we obtain the following existence result.

\begin{theorem}\label{thmexistence}
Given a simply connected toric ALE or toric ALF manifold $(M,g)$ with rod data set $\mathcal{R}$, possibly having conical singularities and corners along finite axes, there exists a corresponding toric gravitational instanton $(M,g_o)$ potentially with conical singularities and corners sharing the same asymptotic ALE or ALF structure, and the same rod data set $\mathcal{R}$.
\end{theorem}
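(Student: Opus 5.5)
The existence statement follows from reducing the Ricci flat equations under toric symmetry to an axisymmetric harmonic map problem. We then invoke the existence and uniqueness theory of Weinstein, Khuri--Weinstein--Yamada, Kunduri--Lucietti and Li--Sun. In Brill coordinates \eqref{m1}, a toric Ricci flat metric is determined as follows. First solve for the fiber metric $G$, viewed (after fixing $\det G=\rho^2$ and using the twist potentials arising from $A^i_a$) as a map $\Phi_o:\mathbb{R}^3\setminus\Gamma\to\mathbb{H}^2$ that is harmonic and axisymmetric. The remaining coefficients $\alpha$ and $A^i_a$ are then obtained by quadrature from line integrals, whose integrability conditions are exactly the harmonic map equations. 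The plan therefore has three steps: construct a model map $\Phi_0$ encoding the rod data $\mathcal{R}$ and the asymptotic type of $(M,g)$; solve for a harmonic map asymptotic to $\Phi_0$; and reconstruct $g_o$ and verify its regularity structure.

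\emph{Model map.} Near each finite rod and corner, take $\Phi_0$ to be the map arising from the flat local models. These are $g_{\mathrm{cone}}$ along $\mathrm{int}\,\Gamma_n$ with kernel direction $\mathbf{v}_n$, and $g_{\mathrm{corner}}$ near $\Gamma_n\cap\Gamma_{n+1}$, which is available by the admissibility condition $\det(\mathbf{v}_n,\mathbf{v}_{n+1})=\pm1$. Near infinity, take $\Phi_0$ to be the harmonic map of the Ricci flat model determined by the preceding Proposition. For ALE this is $b_{ALE}$; for ALF it is the $h$-modified $\tilde b$ of Remark \ref{faihjfpjaoh} with $k=p$ and $1+p\beta\ell=q$; for AF it is $b_{AF}$. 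These local pieces are glued with cutoffs in the $(\rho,z)$ half-plane. Following \cite[Theorem 1.2]{KunduriLucietti}, one checks that the tension $\tau(\Phi_0)$ is bounded near the axes (since the singular parts are exactly harmonic) and decays sufficiently fast at infinity, so that $|\tau(\Phi_0)|$ is integrable against the relevant weight.

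\emph{Harmonic map and reconstruction.} Apply the heat-flow/variational method of Weinstein as formulated in \cite[Theorem 4.24]{LiSun} and \cite[Theorem 1.1]{Kunduri:2026xvc}. A solution $\Phi_o$ exists with $d_{\mathbb{H}^2}(\Phi_o,\Phi_0)$ bounded and tending to zero at infinity; the argument uses that $d_{\mathbb{H}^2}(\Phi_o,\Phi_0)$ is subharmonic up to the tension term, together with a maximum principle and barriers. From $\Phi_o$ recover $G$, then $A^i_a$ by integrating the twist relations, and $\alpha$ by integrating the first-order equations for $\partial_\rho\alpha,\partial_z\alpha$. Because $\Phi_o$ stays at bounded distance from $\Phi_0$, the kernel of $G$ on each $\Gamma_n$ remains $\mathbf{v}_n$, so the rod data set is preserved. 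The asymptotic ALE/ALF structure is inherited from $\Phi_0$ at infinity. The conical models \eqref{ccc1}, \eqref{cccc} hold with possibly new constants $c, c_1,c_2$, which are exactly the allowed conical defects. Simple connectivity and the identification of the manifold with $M$ follow because the rod data set determines the topology \cite{KMWY}.

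\emph{Main obstacle.} The delicate step is controlling $\alpha$. Since $\alpha$ is defined by line integrals, we must show that it is constant to leading order along each axis rod, so that the singularity is a genuine cone with constant angle and yields a finite defect $\pmb{\vartheta}_o$. We must also show that $\alpha$ has the correct limit at infinity, so that the decay \eqref{decay12} holds rather than a residual conical angle at infinity. The natural approach is to integrate the $\alpha$-equation along the axis, using the boundary behaviour of $\Phi_o-\Phi_0$ from the asymptotic analysis of Section \ref{sec7:asymptotics}. At infinity, one would exploit the fact that the model is exactly Ricci flat there, which normalizes $\alpha$; any remaining freedom is an additive constant, fixed by matching at infinity. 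This matching pushes all nonsmoothness onto the finite axes, consistent with the statement.
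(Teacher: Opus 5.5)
Your proposal is correct and follows essentially the same route as the paper. The paper likewise reduces the Ricci flat equations to an axisymmetric harmonic map into $\mathbb{H}^2$ and builds the model map as in Kunduri--Lucietti, except that near infinity it takes the harmonic map of the Ricci flat model geometry (the flat ALE cone, the $h$-modified ALF metric, or $b_{AF}$); it then invokes the existence theory of Weinstein, Khuri--Weinstein--Yamada, Kunduri--Lucietti, Li--Sun and its ALE/ALF extension, taking axis, corner and infinity asymptotics from Li--Sun. One minor correction: no twist potentials enter, since by Proposition \ref{propA0} the twist functions vanish, so one may take $A^i_a=0$ and the $\mathbb{H}^2$-valued map is just $\Phi=G/\rho$ (likewise, constancy of the cone angle along rods is not required by Definition \ref{conedef}, though it holds automatically for the Ricci flat $g_o$).
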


\begin{remark}\label{remreg}
Regularity of the harmonic maps associated with the instantons $(M,g_o)$ was investigated in the vicinity of axis rods and corners in \cite[Section 4.2]{LiSun}. The resulting asymptotics for the harmonic maps and Brill coordinate coefficients in these regions, as well as at infinity, are detailed in Section \ref{sec7:asymptotics}.
\end{remark}

\section{The Reduced Energy Functional} 
\label{sec3} \setcounter{equation}{0}
\setcounter{section}{3}

In the presence of a toric symmetry the scalar curvature naturally contains a harmonic map energy density
arising from the torus fiber portion of the metric. This density, however, exhibits blow-up behavior at
the axes and thus must be `renormalized' in order to serve a useful role in the context of mass comparison.
We begin with the basic expression for scalar curvature in this setting. This may be obtained from O'Neill's
formulas for Riemannian submersions \cite{MR200865} although here we give a direct derivation.


\begin{lemma}\label{lem:R(g)} 
Let $(M,g)$ be a toric Riemannian 4-manifold with metric expressed in Brill coordinates \eqref{m1}. On $\mathbb{R}^3 \setminus \Gamma$, define a function $Z$ and a $2\times 2$ symmetric matrix $\Phi$ with $\det\Phi =1$ by setting $G=\rho e^{Z}\Phi$, then the scalar curvature satisfies
\begin{equation}\label{scal3} 
\begin{aligned}
e^{2\alpha} R =&-2\Delta\alpha+2\nabla\alpha\cdot\nabla\log\rho-\frac{1}{4}\mathrm{Tr}\left(\Phi^{-1}\nabla \Phi\right)^2-\frac{1}{4}e^{-2\alpha} \delta_3^{ac} \delta_3^{bd} G_{ij} F^i_{ab} F^j_{cd}\\
&- 2\Delta Z-\frac{3}{2}|\nabla Z|^2-\nabla Z\cdot\nabla\log\rho+\frac{1}{2}|\nabla\log\rho|^2
\end{aligned}
\end{equation}
where $F^i_{ab}=\partial_aA^i_b-\partial_bA^i_a$ and $\delta_3=d\rho^2+dz^2+\rho^2d\varphi^2$ is the flat metric on $\mathbb{R}^3$ written in cylindrical coordinates, with $\nabla$, $\Delta$, and $\cdot$ denoting its covariant derivative, Laplacian, and inner product respetively. 
\end{lemma}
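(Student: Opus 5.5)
The plan is to derive \eqref{scal3} in two stages. First we obtain the general Kaluza--Klein (O'Neill) formula for the submersion metric \eqref{m1} over the two-dimensional orbit space. Then we rewrite everything in terms of the flat metric $\delta_3$ and the decomposition $G=\rho e^{Z}\Phi$.

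\emph{Stage 1: the submersion formula.} Regard \eqref{m1} as a Riemannian submersion $(M,g)\to(\{\rho>0\},h)$ with base metric $h=e^{2\alpha}(d\rho^2+dz^2)$ and flat torus fibers with metric $G(x)$. The horizontal distribution is spanned by $\partial_a-A^i_a\partial_{\phi^i}$, and its integrability tensor is $-\tfrac12F^i_{ab}\partial_{\phi^i}$. Since the fibers are flat, O'Neill's formula (or a direct orthonormal coframe computation with $\theta^i=d\phi^i+A^i_adx^a$) should give
\begin{equation*}
R=R_h-\tfrac14 h^{ac}h^{bd}G_{ij}F^i_{ab}F^j_{cd}+\tfrac14\mathrm{Tr}\big(G^{-1}\nabla_hG\,G^{-1}\nabla_hG\big)-\tfrac14|\nabla_h\log\det G|_h^2-\Delta_h\log\det G .
\end{equation*}
Here the last three terms come from the mean curvature and second fundamental form of the fibers. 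I would check this formula quickly against a warped product $G=f^2$ in one fiber dimension, where it must reduce to $-2f^{-1}\Delta_hf$. In two dimensions $R_h=-2e^{-2\alpha}\Delta_2\alpha$, and $h$-quantities are conformal rescalings of flat ones by $e^{-2\alpha}$. Multiplying through by $e^{2\alpha}$ then produces the curvature term $-\tfrac14e^{-2\alpha}\delta^{ac}\delta^{bd}G_{ij}F^iF^j$ exactly as in \eqref{scal3}, and all remaining terms carry flat two-dimensional derivatives.

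\emph{Stage 2: pass to $\mathbb{R}^3$ and substitute $G=\rho e^Z\Phi$.} For axisymmetric functions we have $\Delta_2f=\Delta f-\rho^{-1}\partial_\rho f=\Delta f-\nabla f\cdot\nabla\log\rho$. This converts $-2\Delta_2\alpha$ into $-2\Delta\alpha+2\nabla\alpha\cdot\nabla\log\rho$, which are the first two terms of \eqref{scal3}. Next, $\det\Phi=1$ gives $\log\det G=2\log\rho+2Z$. It also gives $G^{-1}\nabla G=(\nabla\log\rho+\nabla Z)I+\Phi^{-1}\nabla\Phi$ with $\mathrm{Tr}(\Phi^{-1}\nabla\Phi)=0$. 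Hence
\begin{equation*}
\mathrm{Tr}(G^{-1}\nabla G)^2=2|\nabla\log\rho+\nabla Z|^2+\mathrm{Tr}(\Phi^{-1}\nabla\Phi)^2 .
\end{equation*}
We then insert these into the Stage 1 formula and apply the $\Delta_2\to\Delta$ conversion to $\log\det G$. We also use $\Delta\log\rho=0$ on $\mathbb{R}^3\setminus\Gamma$ and $|\nabla\log\rho|^2=\rho^{-2}$. Collecting terms should yield
\begin{equation*}
-2\Delta Z-\tfrac32|\nabla Z|^2-\nabla Z\cdot\nabla\log\rho+\tfrac12|\nabla\log\rho|^2 .
\end{equation*}
The $\Phi$ contribution appears with the sign prescribed in \eqref{scal3}.

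The main obstacle is purely one of bookkeeping. The signs and coefficients of the fiber terms in Stage 1 must be pinned down exactly, since the cross terms in $\nabla Z\cdot\nabla\log\rho$ and the constant in front of $|\nabla\log\rho|^2$ are sensitive to them. To control this I would derive Stage 1 directly: compute the connection forms in the coframe $\{e^\alpha d\rho,\,e^\alpha dz,\,\theta^i\}$ and take the trace of the curvature. I would then verify the final identity on flat $\mathbb{R}^4$ in Brill form, where $\alpha$ and $Z$ are explicit and $R=0$, as a consistency check.
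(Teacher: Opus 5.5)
Your overall route matches the paper's: take the submersion formula for the scalar curvature over the orbit space, substitute $G=\rho e^{Z}\Phi$, and convert $\Delta_2$ into the axisymmetric Laplacian on $\mathbb{R}^3$. However, the Stage 1 formula you wrote has a sign error, and the claimed conclusion does not follow from it. The term $\tfrac14\mathrm{Tr}(G^{-1}\nabla_hG\,G^{-1}\nabla_hG)$ is $|T|^2$, the squared norm of the second fundamental form of the torus fibers, and it enters O'Neill's formula with a minus sign. The correct identity is
\[
R=R_h-\tfrac14 h^{ac}h^{bd}G_{ij}F^i_{ab}F^j_{cd}-\tfrac14\mathrm{Tr}\big(G^{-1}\nabla_hG\big)^2-\tfrac14|\nabla_h\log\det G|_h^2-\Delta_h\log\det G .
\]
After multiplying by $e^{2\alpha}$ and rewriting $-2\Delta_2\alpha$, this is the paper's \eqref{scal2}. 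Equivalently, it follows from tracing the appendix formula using $\mathrm{Tr}(G^{-1}\hat\nabla_a\hat\nabla_b G)=\hat\nabla_a\hat\nabla_b\log\det G+\mathrm{Tr}(G^{-1}\hat\nabla_aG\,G^{-1}\hat\nabla_bG)$.

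The warped-product test you proposed would have caught this: with $G=f^2$ your version gives $-2f^{-1}\Delta_hf+2|\nabla_h\log f|^2_h$ instead of $-2f^{-1}\Delta_h f$. Carried through Stage 2 as written, your formula produces
\[
\tfrac14\mathrm{Tr}(\Phi^{-1}\nabla\Phi)^2-2\Delta Z-\tfrac12|\nabla Z|^2+\nabla Z\cdot\nabla\log\rho+\tfrac32|\nabla\log\rho|^2 .
\]
So the assertion that the $\Phi$ term and the $Z$, $\rho$ terms come out as in \eqref{scal3} is false for the formula you stated.

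With the sign fixed, your Stage 2 bookkeeping is correct and completes the proof as in the paper. One has
\[
-\tfrac14\mathrm{Tr}(G^{-1}\nabla G)^2-\tfrac14|\nabla\log\det G|^2=-\tfrac14\mathrm{Tr}(\Phi^{-1}\nabla\Phi)^2-\tfrac32|\nabla\log\rho+\nabla Z|^2
\]
and
\[
-\Delta_2\log\det G=-2\Delta Z+2|\nabla\log\rho|^2+2\nabla Z\cdot\nabla\log\rho .
\]
These sum to the required $-\tfrac14\mathrm{Tr}(\Phi^{-1}\nabla\Phi)^2-2\Delta Z-\tfrac32|\nabla Z|^2-\nabla Z\cdot\nabla\log\rho+\tfrac12|\nabla\log\rho|^2$. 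The only remaining difference is that you derive the submersion formula via O'Neill or a coframe computation, whereas the paper imports it from the traced Ricci formulas in its appendix.
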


\begin{remark}
In the Ricci flat setting Proposition \ref{detG=rho} shows that $Z=0$ and $A^i_{a} =0$ for all $i,a=1,2$. 
\end{remark}

\begin{proof}
According to appendix equation \eqref{scalar2}, a computation shows that the scalar curvature takes the form
\begin{equation}\label{scal2} 
\begin{aligned}
e^{2\alpha} R=&-2\Delta\alpha+2\nabla\alpha\cdot\nabla\log\rho - \left(\Delta_2\log\det G+\frac{1}{4}\mathrm{Tr}\left(G^{-1}\nabla G\right)^2+\frac{1}{4}|\nabla\log\det G|^2\right)\\
&-\frac{1}{4}e^{-2\alpha} \delta_3^{ac} \delta_3^{bd} G_{ij} F^i_{ab} F^j_{cd}\\
\end{aligned}
\end{equation}
where $\Delta_2$ is the Laplacian with respect to the 2-dimensional flat metric $\delta_2=d\rho^2+dz^2$. 
Define the symmetric unimodular matrix
$\Phi:= (\det G)^{-\frac{1}{2}} G$, let $I_2$ denote the identity matrix, and observe that
\begin{equation}
    \begin{split}
    \left(G^{-1}\nabla G\right)^2&=\left(\frac{1}{2}\nabla\log\det G\, I_{2}+\Phi^{-1}\nabla \Phi\right)^2\\
    &=\frac{1}{4}|\nabla\log\det G|^2\, I_{2}+\nabla\log\det G\, \left(\Phi^{-1}\nabla \Phi\right)+\left(\Phi^{-1}\nabla \Phi\right)^2.
    \end{split}
\end{equation}  
Then combining this with $\mathrm{Tr}\left(\Phi^{-1}\nabla \Phi\right)=\nabla (\log\det\Phi)=0$ produces
\begin{equation}
\begin{split}
\mathrm{Tr}\left(G^{-1}\nabla G\right)^2&=\frac{1}{2}|\nabla\log\det G|^2+\mathrm{Tr}\left(\Phi^{-1}\nabla \Phi\right)^2.
\end{split}
\end{equation} 
Next, define the function
\begin{equation}\label{def:Z}
    Z:=\frac{1}{2}\log\det G-\log\rho 
\end{equation} 
and note that
\begin{equation}
  \begin{split}
        |\nabla\log\det G|^2&=4|\nabla Z|^2+8\nabla Z\cdot\nabla\log\rho+4|\nabla\log\rho|^2,
  \end{split}
\end{equation}
as well as
\begin{equation}
    \begin{split}
    \Delta_2\log\det G &=\Delta\log\det G-\nabla\log\det G\cdot\nabla\log\rho\\
    &=\Delta\left(\log\det G-2\log\rho\right)-2\nabla\left(Z+\log \rho\right)\cdot\nabla\log\rho\\
    &=2\Delta Z-2\nabla Z\cdot\nabla\log\rho-2|\nabla\log\rho|^2.
    \end{split}
\end{equation}
Inserting these expressions into \eqref{scal2} yields the desired result.
\end{proof}

We now seek an interpretation of the term involving $\Phi$ within the scalar curvature formula. 
To this end, define functions
\begin{equation}\label{Def:VW}
\begin{aligned}
V&:=\frac{1}{2}\log\left(\frac{\Phi_{11}}{\Phi_{22}}\right),\qquad W:=\sinh^{-1}\left(\Phi_{12}\right),\quad \text{ in the ALE, ALF, and AF$_0$ cases,} \\
V&:=\frac{1}{2}\log\left(\frac{\Phi_{11}}{\Phi_{22}-2\beta\ell \Phi_{12}+\beta^2\ell^2\Phi_{11}}\right),\quad W:=\sinh^{-1}\left(\Phi_{12}-\beta\ell \Phi_{11}\right),\quad \text{ in the AF$_{\beta\ell}$ case,}
\end{aligned}
\end{equation}
and observe that using $\det\Phi=1$ the inverse relations are given by
\begin{equation}
\Phi_{11} = e^V \cosh W, \qquad \Phi_{12} = \sinh W, \qquad    \Phi_{22} = e^{-V} \cosh W ,
\end{equation} 
and
\begin{equation}
\Phi_{11} \!=\! e^{V}\!\cosh W,\!\quad
\Phi_{12}\! = \!\sinh W \!+\! \beta \ell e^{V}\!\cosh W,\!\quad
\Phi_{22}\! = \!(e^{-V}\!\!+\beta^2 \ell^2 e^{V})\cosh W \!+ 2\beta \ell \sinh W ,
\end{equation}
respectively. It follows that
\begin{equation}\label{targetmet}
 \frac{1}{2}\mathrm{Tr}\left(\Phi^{-1}\nabla \Phi\right)^2=\cosh^2W|\nabla V|^2+|\nabla W|^2,
\end{equation} 
showing that this expression is a harmonic map energy density for the map $\Phi:\mathbb{R}^3 \setminus\Gamma\rightarrow\mathbb{H}^2$,
where the hyperbolic plane is parameterized by Fermi coordinates.

The asymptotics of $\Phi$ at the axes will typically produce an infinite energy, which motivates the following renormalization.
Let $(M,g)$ be a simply connected toric ALE or toric ALF manifold, and let $(M,g_o)$ be the corresponding toric gravitational instanton having the same rod data set given by Theorem \ref{thmexistence}. Consider the maps $\Psi=(V,W)$ and $\Psi_o =(V_o ,W_o)$ associated with $g$ and $g_o$, respectively. Using
\begin{align}
    \begin{split}
        |\nabla V|^2&=|\nabla(V-V_o)|^2+2\nabla(V-V_o)\cdot \nabla V_o+|\nabla V_o|^2,\\
       |\nabla W|^2&=|\nabla(W-W_o)|^2+2\nabla(W-W_o)\cdot \nabla W_o+|\nabla W_o|^2,
    \end{split}
\end{align}
and the harmonic property of $\log\rho$ with respect to $\delta_3$, produces the difference of the $g$ and $g_o$-scalar curvatures 
\begin{equation}\label{scal3bDiff} 
\begin{aligned}
e^{2\alpha}R =&\text{div}_{\delta_3}X-\frac{3}{2}|\nabla Z|^2-\frac{1}{4}e^{-2\alpha} \delta_3^{ac} \delta_3^{bd} G_{ij} F^i_{ab} F^j_{cd}+(V-V_o)\Delta V_o+(W-W_o)\Delta W_o\\
&-\frac{1}{2}\left(\sinh^2W|\nabla V|^2-\sinh^2W_o|\nabla V_o|^2+|\nabla (V-V_0)|^2+|\nabla (W-W_o)|^2\right),
\end{aligned}
\end{equation}
where
\begin{equation}\label{defX}
\begin{split}
X:=&-2\nabla(\alpha-\alpha_o+Z)+\left(2\alpha-2\alpha_o-Z\right)\nabla\log\rho-(V-V_o)\nabla V_o-(W-W_o)\nabla W_o .
\end{split}
\end{equation}

Let $\boldsymbol{\varsigma}=(\varsigma_1,\varsigma_2,\varsigma_3)$ be a collection of small positive parameters.
We may decompose the open ball $B_{2/\varsigma_{3}}\subset\mathbb{R}^3$ centered at the origin that includes part of the semi-infinite rods, into three types of pairwise disjoint regions 
$B_{2/\varsigma_3}=\Omega_{\boldsymbol{\varsigma}}\cup\mathcal{A}_{\boldsymbol{\varsigma}}\cup \left(\cup_{n=1}^N \overline{B_{\varsigma_2}(z_n)}\right)$ where
\begin{equation}
\Omega_{\boldsymbol{\varsigma}}\!=\!\{ r<2/\varsigma_3, \text{ }\rho>\varsigma_1, \text{ }\varsigma_2<r_n \text{ for } n=1,\dots\!, N
\},\quad \mathcal{A}_{\boldsymbol{\varsigma}}\!=\!B_{2/\varsigma_3}\setminus
\left(\Omega_{\boldsymbol{\varsigma}}\cup \left(\cup_{n=1}^N \overline{B_{\varsigma_2}(z_n)}\right)\right).
\end{equation}
Here $r$ is adapted to the 4-dimensional model geometries and is given by \eqref{alerho}, \eqref{alfrho} in the ALE, ALF cases respectively, whereas $r_n$ is a radial coordinate defined by \eqref{cornerrho} which is centered at the $n$th corner point on the $z$-axis located at height $z_n$, and $B_{\varsigma_2}(z_n)$ is the open ball centered at this point of radius $\varsigma_2$. Integrating \eqref{scal3bDiff} over $\Omega_{\boldsymbol\varsigma}$ and using the divergence theorem yields
\begin{equation}\label{integralscalarcurvature}
\mathcal{B}^{\boldsymbol\varsigma}_\text{axis}+\mathcal{B}^{\boldsymbol\varsigma}_\text{corner}+\mathcal{B}^{\boldsymbol\varsigma}_{\infty}
=\mathcal{I}_{\Omega_{\boldsymbol\varsigma}}(\Psi)+\int_{\Omega_{\boldsymbol\varsigma}}\left(e^{2\alpha}R+\frac{3}{2}|\nabla Z|^2+\frac{1}{4}e^{-2\alpha} \delta_3^{ac} \delta_3^{bd} G_{ij} F^i_{ab} F^j_{cd}\right)dx
\end{equation} 
where
\begin{align}\label{ReducedI}
\begin{split}
\mathcal{I}_{\Omega_{\boldsymbol\varsigma}}(\Psi)=\frac{1}{2}&\int_{\Omega_{\boldsymbol\varsigma}}\left(\sinh^2W|\nabla V|^2-\sinh^2W_o|\nabla V_o|^2+|\nabla (V-V_0)|^2+|\nabla (W-W_o)|^2\right)\,dx\\
-&\int_{\Omega_{\boldsymbol\varsigma}}\left((V-V_o)\Delta V_o+(W-W_o)\Delta W_o\right)dx,
\end{split}
\end{align}
and
\begin{equation}\label{Bon1}
\mathcal{B}^{\boldsymbol\varsigma}_\text{axis}=\int_{\partial \mathcal{A}_{\boldsymbol\varsigma}\cap\partial\Omega_{\boldsymbol\varsigma}}\!\!\!\!\!\!\!\!\!\! X(\nu) dA,\quad 
\mathcal{B}^{\boldsymbol\varsigma}_\text{corner}=\sum_{n=1}^{N}\int_{\partial \mathcal{B}_{\varsigma_2}(z_n)\cap\partial\Omega_{\boldsymbol\varsigma}}\!\!\!\!\!\!\!\!\!\! X(\nu) dA,\quad
\mathcal{B}^{\boldsymbol\varsigma}_\infty=\int_{\partial B_{2/\varsigma_3}\cap\partial\Omega_{\boldsymbol\varsigma}}\!\!\!\!\!\!\!\!\!\! X(\nu) dA,
\end{equation} 
with $\nu$ denoting the unit outer normal. We define the \emph{reduced energy} to be the following limit 
\begin{equation}
\mathcal{I}(\Psi):=\lim_{\varsigma_3\to 0}\lim_{\varsigma_2\to 0}\lim_{\varsigma_1\to 0}\mathcal{I}_{\Omega_{\boldsymbol\varsigma}}(\Psi).
\end{equation}
The corresponding limits for the boundary integrals \eqref{Bon1} exist and are finite by
Lemmas \ref{lem4.1}, \ref{lem4.2}, and \ref{LemmaIinfinity}, and the same will now be shown for the reduced energy.


\begin{proposition}\label{Prop:finiteness}
Let $\Psi=(V,W)$ and $\Psi_o=(V_o,W_o)$ be maps as described above. Then the reduced energy functional $\mathcal{I}(\Psi)$ is well-defined and finite.
\end{proposition}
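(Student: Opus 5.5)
Since the definition of $\mathcal{I}(\Psi)$ is an iterated limit over $\Omega_{\boldsymbol\varsigma}$, the plan is to show that the integrand of \eqref{ReducedI} is absolutely integrable on all of $\mathbb{R}^3\setminus\Gamma$. Dominated convergence then makes the iterated limit independent of the order of the exhaustion and finite. The difficulty is only local: in the interior of $\mathbb{R}^3\setminus\Gamma$ both $\Psi$ and $\Psi_o$ are smooth. So I would split $\mathbb{R}^3$ into three kinds of regions: neighborhoods of the interiors of the axis rods, small balls around the corners $z_n$, and the exterior of a large ball. On each region I would insert the asymptotics of Section \ref{sec7:asymptotics} (for $g$ via \eqref{ccc1}, \eqref{cccc}, \eqref{decay12}, and for $g_o$ via Remark \ref{remreg}).

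The key structural point is that $g$ and $g_o$ share the rod data set, hence the same kernel vectors $\mathbf{v}_n$. Consequently the singular parts of $\Psi$ and $\Psi_o$ coincide: near an axis rod the divergent part of $V$ (or $W$) is a multiple of $\log\rho$ dictated by $\mathbf{v}_n$. Thus $V-V_o$ and $W-W_o$ stay bounded up to the axis, and in fact equal a bounded function (involving the difference of logarithmic angle defects $\pmb{\vartheta}^n-\pmb{\vartheta}^n_o$) plus $O(\rho^{\zeta})$. Their gradients are $O(\rho^{\zeta-1})$ or better.

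\textbf{Axis terms.} With these facts, $|\nabla(V-V_o)|^2+|\nabla(W-W_o)|^2=O(\rho^{2\zeta-2})$, which is integrable against $\rho\,d\rho\,dz$. For the term $\sinh^2W|\nabla V|^2-\sinh^2W_o|\nabla V_o|^2$, I would use the normalization of rod structures. On rods where $V\sim\pm\log\rho$ one has $W=O(\rho^{1+\zeta})$, so $\sinh^2W\,|\nabla V|^2=O(\rho^{2\zeta})$, which is harmless. On rods where $W$ diverges (for instance when the rod vector is not a coordinate axis, so that $\Phi_{12}$ blows up), I would first change the Fermi coordinate frame by an $SL(2,\mathbb{Z})$ transformation adapted to $\mathbf{v}_n$. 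The energy density \eqref{targetmet} is invariant, and the difference of reduced integrands changes only by a divergence of a bounded vector field, which can be absorbed. For the linear terms $(V-V_o)\Delta V_o$, I would use that $\Psi_o$ is harmonic into $\mathbb{H}^2$, so $\Delta V_o=-2\tanh W_o\nabla W_o\cdot\nabla V_o$ and $\Delta W_o=\sinh W_o\cosh W_o|\nabla V_o|^2$. By the axis asymptotics these are $O(\rho^{\zeta-1})$ or better near rods, hence integrable when multiplied by the bounded factor $V-V_o$.

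\textbf{Corners.} At corners the same reasoning applies in the radial coordinate $r_n$ of \eqref{cornerrho}. The models \eqref{metriccornercone} for $g$ and $g_o$ have the same singular structure, so the differences are bounded. The residual derivatives are $O(r_n^{-1})$ at worst on a set of $\mathbb{R}^3$-measure $r_n^2\,dr_n$, giving integrable contributions. Near the two intersecting rods I would combine the axis estimates uniformly in $r_n$, using Hopf-type coordinates $\rho=r_n^2\sin 2\theta/2$-like relations.

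\textbf{Infinity and the main obstacle.} At infinity, both metrics decay to the same model $b$. The three-dimensional radius is $\sim r^2$ in the ALE case and $\sim r$ in the ALF case, so I would convert \eqref{decay12} into decay of $\Psi-\Psi_o$ of order $r^{-1-\kappa}$ (respectively $r^{-1/2-\kappa}$) in the appropriate 3D radius, with one extra power for gradients. Quadratic differences then integrate against the volume growth. I expect this step to be the main obstacle, particularly in the ALF case, where the decay $r^{-1/2-\kappa}$ is only borderline. There the quadratic terms give $r^{-3-2\kappa}\cdot r^2\,dr$, which is fine. The cross terms, however, such as $\sinh^2W-\sinh^2W_o$ times $|\nabla V_o|^2$ and $(V-V_o)\Delta V_o$, need the fact that $\Psi_o$ equals the model harmonic map plus $O(r^{-1})$. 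With that, $\Delta V_o$ and $\Delta W_o$ decay like $r^{-3}$, and the cross terms are $O(r^{-7/2-\kappa})$, which is integrable. If the direct decay is insufficient near the semi-infinite axes at large $r$, I would fall back on writing the cross terms as a divergence plus a quadratic remainder, and control the boundary flux via Lemma \ref{LemmaIinfinity}.
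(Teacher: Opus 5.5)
\textbf{Verdict: genuine gap at the ALE end.} Your route differs from the paper's, and its central mechanism (absolute integrability plus dominated convergence) fails there; a secondary gap is at the corners.

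\textbf{The paper's route.} The paper never bounds the density in \eqref{ReducedI}. It rearranges the integrated scalar curvature identity \eqref{integralscalarcurvature}, so that $\mathcal{I}_{\Omega_{\boldsymbol\varsigma}}(\Psi)$ equals the three boundary fluxes minus $\int_{\Omega_{\boldsymbol\varsigma}}(e^{2\alpha}R+\tfrac32|\nabla Z|^2+\tfrac14e^{-2\alpha}|F|^2)\,dx$. The flux limits exist by Lemmas \ref{lem4.1}, \ref{lem4.2}, \ref{LemmaIinfinity}. The bulk integrand is in $L^1$ because $R\in L^1(M)$, together with simple bounds on $\nabla Z$ and $F$.

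\textbf{Why dominated convergence fails in the ALE case.} Your decay conversion is off: the three-dimensional radius is $|x|\sim r^2$, so $\Psi-\Psi_o=O(r^{-1-\kappa})$ means only $O(|x|^{-1/2-\kappa/2})$. The quadratic terms survive this, but the first-order cross terms do not. Since the model map depends only on $\theta$, one has $|\nabla V_o|,|\nabla W_o|\sim r^{-2}$ and $\sinh W_o\approx q\cot\theta$. Consequently $\Delta V_o=-2\tanh W_o\nabla W_o\cdot\nabla V_o$ is of order exactly $r^{-4}$, and nonzero, when $q\neq0$. Hence $(V-V_o)\Delta V_o$ and $\sinh^2W_o\nabla V_o\cdot\nabla(V-V_o)$ are of size $r^{-5-\kappa}$, while $dx=\tfrac{r^5}{2p^3}\sin2\theta\,dr\,d\theta\,d\varphi$. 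For $0<\kappa\le1$ these terms generically fail to be in $L^1$. They do not cancel pointwise: only the $W-W_o$ cross term cancels, against $-(W-W_o)\Delta W_o$. So the full integrand is generically not integrable.

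\textbf{The missing idea: divergence structure.} Using $\mathrm{div}(\cosh^2W_o\nabla V_o)=0$, the linear part of the integrand is exactly $\mathrm{div}((V-V_o)\sinh^2W_o\nabla V_o)$. The remainder is quadratic and integrable. You would then have to show that the flux of this vector field tends to zero:
\begin{itemize}
\item through the large spheres, which holds because only $\partial_rV_o=O(r^{-5})$ enters, the model profile being $\theta$-dependent only;
\item through the axis cylinders and the corner spheres.
\end{itemize}
Your fallback gestures at this, but only ``near the semi-infinite axes'', whereas it is needed in the bulk of the ALE end. Moreover, Lemma \ref{LemmaIinfinity} computes the flux of $X$, not of this vector field.

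\textbf{The corner step is also unsupported.} The stated corner asymptotics give only $W=O(r_n^2)$ and $|\nabla V|\approx\rho^{-1}$ near the adjacent rods. Hence $\sinh^2W|\nabla V|^2\lesssim r_n^4\rho^{-2}=4/\sin^22\theta$. This is not integrable against the actual measure $\tfrac12r_n^5\sin2\theta\,dr_n\,d\theta\,d\varphi$, which is not $r_n^2\,dr_n$. Combining axis estimates ``uniformly in $r_n$'' would require uniformity up to the corner that is not available. The paper's route avoids this, needing only $|\nabla Z|^2=O(1)$ and $|F|^2=O(r_n^{-2})$ near corners.
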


\begin{proof}
Since the limits of boundary integrals in \eqref{integralscalarcurvature} exist and are finite, it suffices to show the same for the bulk integral expression in this equation.
To see this, observe that the asymptotics of Section \ref{sec7:asymptotics} imply
\begin{equation}
|\nabla Z|^2=O(r^{-6-2\kappa})\quad \text{for ALE},\qquad |\nabla Z|^2=O(r^{-3-2\kappa})\quad \text{for ALF/ AF$_{\beta\ell}$},
\end{equation}
\begin{equation}
|\nabla Z|^2=O(1)\quad \text{in $\mathcal{A}_{\boldsymbol\varsigma}$},\qquad |\nabla Z|^2=O(1)\quad \text{in $B_{\varsigma_2}(z_n)$},
\end{equation}
showing that the second integrand is integrable. Moreover,
since $R\in L^1(M)$ and $dx_g=e^{2\alpha}e^{Z}dx$, it follows that $e^{2\alpha}R\in L^1(\mathbb{R}^3)$. 
Furthermore, the asymptotics of Section \ref{sec7:asymptotics} also produce
\begin{equation}
\delta_3^{ac} \delta_3^{bd} G_{ij} F^i_{ab} F^j_{cd}=O(r^{-8-2\kappa})\quad \text{for ALE},\quad \delta_3^{ac} \delta_3^{bd} G_{ij} F^i_{ab} F^j_{cd}=O(r^{-3-2\kappa})\quad \text{for ALF/AF$_{\beta\ell}$},
\end{equation}
\begin{equation}
\delta_3^{ac} \delta_3^{bd} G_{ij} F^i_{ab} F^j_{cd}=O(\rho^{2\zeta})\quad\text{in $\mathcal{A}_{\boldsymbol\varsigma}$},\qquad \delta_3^{ac} \delta_3^{bd} G_{ij} F^i_{ab} F^j_{cd}=O(r_n^{-2})\quad \text{in $B_{\varsigma_2}(z_n)$}.
\end{equation}
Thus, the last integrand is integrable.
\end{proof}

\section{Convexity of Reduced Energy Functional} 
\label{sec4} \setcounter{equation}{0}
\setcounter{section}{4}

Consider the hyperbolic plane $\mathbb{H}^2$ with metric expressed in Fermi coordinate $(V,W)$ as follows
\begin{equation}\label{16}
g_{\mathbb{H}^2}=\cosh^2WdV^2+dW^2.
\end{equation}
Let $\Omega\subset\mathbb{R}^3 \setminus\Gamma$ be a domain, then the harmonic energy of a map ${\Psi}=(V,W):\Omega\rightarrow
\mathbb{H}^2$ is given by
\begin{align}\label{energy123}
\begin{split}
E_{\Omega}({\Psi})=&\frac{1}{4}\int_{\Omega}\mathrm{Tr}\left(\Phi^{-1}\nabla\Phi\right)^2\,dx=\frac{1}{2}\int_{\Omega}
\left(\cosh^2W|\nabla V|^2+|\nabla W|^2\right)\,dx.
\end{split}
\end{align}
Critical points $\Psi_o =(V_o,W_0)$ of this energy satisfy the harmonic map equations
\begin{equation}\label{ELeqn}
  \mathrm{div}\left(\cosh^2 W_o\nabla V_o\right)=0,\qquad 
\Delta W_o-\sinh  W_o\cosh W_o |\nabla V_o|^2=0\,.
\end{equation}
Moreover, the relation between the harmonic energy $E$ and reduced energy $\mathcal{I}$ takes the form
\begin{equation}\label{relationenergy}
\begin{split}
\mathcal{I}_{\Omega}(\Psi)&=E_{\Omega}({\Psi})
-E_\Omega(\Psi_o)-\int_{\partial\Omega}\left(\nu(V_o)\left(V-V_o\right)+\nu(W_o)\left(W-W_o\right)\right)dA,
\end{split}
\end{equation}
where $\nu$ is the unit outward normal on $\partial\Omega$. 
The main goal of this section is to establish a gap lower bound for the reduced energy. 

\begin{theorem}\label{thm.convexity}
Suppose that the map $\Psi=(V,W)$ and related harmonic map $\Psi_o=(V_o,W_o)$ are smooth on $\mathbb{R}^3 \setminus\Gamma$, and satisfy the asymptotics of Section \ref{sec7:asymptotics}. Then there exists a constant $C>0$ such that
\begin{equation}\label{RHEconvexity}
\begin{split}
\mathcal{I}(\Psi)
&\geq C\left(\int_{\mathbb{R}^3}
\operatorname{dist}_{\mathbb{H}^2}^{6}(\Psi,\Psi_{o})\,dx
\right)^{1/3}\,.
\end{split}
\end{equation}
\end{theorem}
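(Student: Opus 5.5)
We follow the standard convexity approach for harmonic maps into nonpositively curved targets, as in the AE mass–angular momentum literature (Schoen–Zhou, and the toric papers of Alaee–Khuri–Kunduri).

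\textbf{Step 1: reduce to a geodesic deformation.} For each point $x$, let $t\mapsto\Psi_t(x)$, $t\in[0,1]$, be the unique $\mathbb{H}^2$ geodesic from $\Psi_o(x)$ to $\Psi(x)$. Write $\mathcal{I}_\Omega(t)$ for the reduced energy \eqref{relationenergy} of $\Psi_t$, computed relative to $\Psi_o$. Then $\mathcal{I}_\Omega(0)=0$.

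\textbf{Step 2: first variation vanishes at $t=0$.} Since $\Psi_o$ solves \eqref{ELeqn}, $\frac{d}{dt}\mathcal{I}_\Omega(t)|_{t=0}=0$. The bulk term vanishes by the harmonic map equations. The boundary flux term of $E_\Omega$ cancels the correction term in \eqref{relationenergy} to first order; this is exactly why the reduced energy subtracts $\int_{\partial\Omega}\nu(V_o)(V-V_o)+\nu(W_o)(W-W_o)$.

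\textbf{Step 3: second-variation lower bound.} By the negative curvature of $\mathbb{H}^2$, $\frac{d^2}{dt^2}E_\Omega(\Psi_t)\ge 2\int_\Omega|\nabla\,\mathrm{dist}_{\mathbb{H}^2}(\Psi,\Psi_o)|^2dx$. The underlying pointwise facts are:
\begin{itemize}
\item $|\nabla \partial_t\Psi_t|^2$ dominates the gradient of $|\partial_t\Psi_t|=\mathrm{dist}(\Psi,\Psi_o)$ (Kato inequality);
\item the curvature term $-\langle R(\partial_t\Psi,\nabla\Psi)\nabla\Psi,\partial_t\Psi\rangle$ is nonnegative;
\item $\nabla_t\partial_t\Psi_t=0$ along geodesics.
\end{itemize}
The correction term in \eqref{relationenergy} is linear in $V,W$, so its second $t$-derivative does not vanish identically in Fermi coordinates, since Fermi coordinate lines are not geodesics. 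It does vanish after the limits are taken, once its contribution is expressed as a boundary term controlled by the asymptotics. If it cannot be made to vanish, we would instead use $\mathcal{I}(\Psi_t)$ defined intrinsically by the first-variation form and compare at the end.

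\textbf{Step 4: integrate and take limits.} Integrating in $t$ gives $\mathcal{I}_{\Omega_{\boldsymbol\varsigma}}(\Psi)\ge\int_0^1(1-t)\,\mathcal{I}''(t)\,dt\ge\int_{\Omega_{\boldsymbol\varsigma}}|\nabla d|^2dx$, where $d=\mathrm{dist}_{\mathbb{H}^2}(\Psi,\Psi_o)$, up to boundary error terms. We then pass to the limits $\varsigma_1,\varsigma_2,\varsigma_3\to0$. Applying the Gagliardo–Nirenberg–Sobolev inequality $\|d\|_{L^6(\mathbb{R}^3)}\le C\|\nabla d\|_{L^2}$ yields \eqref{RHEconvexity}. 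This step requires $d$ to decay at infinity, which holds because the two maps are asymptotic. It also requires that the axes $\Gamma$ and the corners have zero capacity, which holds because $d$ is bounded there by the asymptotics of Section \ref{sec7:asymptotics}.

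\textbf{Main obstacle: boundary terms in the limit.} The hard part is showing that the boundary contributions arising in Steps 2–4 disappear as $\boldsymbol\varsigma\to0$. Near the axes the individual energies diverge like $|\nabla\log\rho|^2$. One must show that $\Psi_t$ inherits the same singular behaviour as $\Psi_o$, namely $V_t-V_o$ and $W_t-W_o$ bounded with bounded $\rho\nabla$-derivatives. Then fluxes over $\{\rho=\varsigma_1\}$ are $O(\varsigma_1)$, fluxes over small spheres around corners are $O(\varsigma_2)$, and fluxes at infinity vanish by the decay rates. We would check these using the explicit geodesic formulas in $\mathbb{H}^2$ together with the axis, corner and infinity asymptotics.
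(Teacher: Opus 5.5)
Your overall architecture matches the paper's: a geodesic homotopy from $\Psi_o$, the second-variation lower bound, two integrations in $t$, disposal of boundary terms, then Sobolev. Your treatment of the axes is genuinely different, though. In the proof of this theorem the paper never estimates fluxes through $\{\rho=\varsigma_1\}$. It first replaces $\Psi$ by the cut-and-paste map $\Psi_{\boldsymbol\varepsilon}=\Psi_o+\varphi_{\varepsilon_1}(\Psi-\Psi_o)$, using the logarithmic cutoff \eqref{67}. Then $\Psi_{\boldsymbol\varepsilon}=\Psi_o$ on $\{\rho\le\varepsilon_1\}$, so every axis boundary term vanishes identically once $\varsigma_1<\varepsilon_1$. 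The cost is Lemma \ref{lemma3.4} (continuity of the reduced energy under the cutoff) and a Fatou argument in $\varepsilon_1$. A side benefit is that $\mathrm{dist}_{\mathbb{H}^2}(\Psi_{\boldsymbol\varepsilon},\Psi_o)$ vanishes near $\Gamma$. Your route instead trades Lemma \ref{lemma3.4} for direct flux estimates plus a removability argument in the Sobolev step. That route can be completed, but the justification you give is not correct as stated.

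\emph{Step 2 is false on bounded domains.} Relation \eqref{relationenergy} subtracts only the flux of the flat part $|\nabla V|^2+|\nabla W|^2$, not that of $\cosh^2W\,|\nabla V|^2$. So the harmonic map equations give
$\frac{d}{dt}\mathcal{I}_{\Omega}(\Psi_t)|_{t=0}=\int_{\partial\Omega}\sinh^2W_o\,\nu(V_o)\,\dot V^0\,dA\neq0$.
After integrating twice, the full boundary term is the $I_{\partial\Omega_{\boldsymbol\varsigma}}$ of \eqref{boundaryenergy}. The paper controls it with the geodesic equations \eqref{gedesic}.

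\emph{Your criterion for small axis fluxes is insufficient.} Bounded $V_t-V_o$, $W_t-W_o$ (with bounded $\rho\nabla$-derivatives) does not make the axis fluxes small.
\begin{itemize}
\item Near type II rods with $\beta\neq0$, and near type III rods, $|W_o|\sim|\log\rho|$. Hence $\sinh^2W_o\sim\rho^{-2}$ and $|\nu(W_o)|\sim\rho^{-1}$.
\item If $\dot V_t$ is merely bounded, then near a type III rod the residual term above is $O(1)$.
\item In the same situation, $\int_{\{\rho=\varsigma_1\}}\nu(W_o)\ddot W_t\,dA$, with $\ddot W_t=\sinh W_t\cosh W_t(\dot V_t)^2$, is of order $\varsigma_1^{-2}$.
\end{itemize}
What actually makes the fluxes small is the sharp relative decay from Section \ref{sec7.axis}: $V-V_o=O(\rho^2)$ for type III rods, and $O(\rho^{1+\zeta})$ for type II$_\beta$ rods. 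You transport this along each geodesic using the first integral $\cosh^2W_t\,\dot V_t=\mathrm{const}$ and the bound $|W_t-W_o|\le\mathbf{d}$. This gives $\dot V_t=O(\rho^2)$, respectively $O(\rho^{1+\zeta})$.

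Near type I and II$_0$ rods, $|\nu(V_o)|\sim\rho^{-1}$. There you also need $W_t=O(\rho)$ along the geodesic, to control $\nu(V_o)\tanh W_t\,\dot V_t\dot W_t$. This follows from convexity of $|W|$, the distance to the geodesic $\{W=0\}$.

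With these inputs the axis fluxes do tend to zero, though not at rate $O(\varsigma_1)$; near type II$_\beta$ rods they decay only like $\varsigma_1^{2\zeta}$. The corner and infinity terms are then handled exactly as in the paper via \eqref{4.29}.

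\emph{Sobolev step.} Near corners the paper only gives $\mathbf{d}=O(|\log\sin2\theta|)$, not boundedness. Local square-integrability is enough for removability across the codimension-two set $\Gamma$, so your capacity argument survives once stated that way.
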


Since the target space is negatively curved, the harmonic energy is convex on bounded regions that exclude the axis and corner singularities. The singular behavior of the maps $\Psi$ and $\Psi_o$ near the axis, however, prevents this convexity from extending directly to the reduced energy on the whole of $\mathbb{R}^3$. It is therefore necessary to analyze the boundary behavior of the reduced energy separately near the axis, at the corners, and at infinity. Proving that the boundary terms make no contribution to the convexity argument requires a cut-and-paste construction in which $\Psi$ is replaced by $\Psi_o$ near the axis. 

Let $\boldsymbol{\varepsilon}=(\varepsilon_1,\varepsilon_2,\varepsilon_3)$ be a collection of small positive parameters such that $\varsigma_i<\varepsilon_i <1$, where $\boldsymbol{\varsigma}=(\varsigma_1,\varsigma_2,\varsigma_3)$ is given in Section \ref{sec3}. Consider the following cut-off function
\begin{equation}\label{67}
\varphi_{\varepsilon_1}=\begin{cases}
0 & \text{ if $\rho\leq\varepsilon_1$} \\
\frac{\log(\rho/\varepsilon_1)}{\log(\sqrt{\varepsilon_1}/
\varepsilon_1)} &
\text{ if $\varepsilon_1<\rho<\sqrt{\varepsilon_1}$} \\
1 & \text{ if $\rho\geq\sqrt{\varepsilon_1}$} \\
\end{cases}\,.
\end{equation}
Recall the region $\mathcal{A}_{\boldsymbol\varepsilon}$ and define an additional annular cylindrical region about the $z$-axis by
\begin{equation}\label{91.1}
\begin{split}
\tilde{\mathcal{A}}_{\boldsymbol{\varepsilon}}&=\{\varepsilon_1\leq \rho\leq \sqrt{\varepsilon_1}\}\cap \{ r< 2/\varepsilon_3,\text{ }{\varepsilon_2}< r_n\,\text{ for }\, n=1,\dots,N\},\\
\mathcal{A}_{\boldsymbol{\varepsilon}}&=\{\rho\leq \varepsilon_1\}\cap \{r< 2/\varepsilon_3,\text{ }{\varepsilon_2}< r_n\,\text{ for }\text{}\, n=1,\dots,N\}.
\end{split}
\end{equation}
Furthermore, set $\Psi_{\boldsymbol\varepsilon}=(V_{\varepsilon_1},W_{\varepsilon_1}):=(V_o+\varphi_{\varepsilon_1}(V-V_o),W_o+\varphi_{\varepsilon_1}(W-W_o))$ so that
\begin{equation}
(V_{\varepsilon_1},W_{\varepsilon_1})=\begin{cases}
(V,W) & \text{in }\tilde{B}_{2/\varepsilon_3}\setminus \left(\tilde{\mathcal{A}}_{\boldsymbol{\varepsilon}}\cup \mathcal{A}_{\boldsymbol{\varepsilon}}\right)\\
(V_o+\varphi_{\varepsilon_1}(V-V_o),W_o+\varphi_{\varepsilon_1}(W-W_o)) & \text{in }\tilde{\mathcal{A}}_{\boldsymbol{\varepsilon}}\\
(V_o,W_o) & \text{in }\mathcal{A}_{\boldsymbol{\varepsilon}}
\end{cases},
\end{equation}
where $\tilde{B}_{2/\varepsilon_3}=B_{2/\varepsilon_3}\setminus \cup_{n=1}^N B_{\varepsilon_2}(z_n)$.

\begin{lemma}\label{lemma3.4}
For fixed $\varepsilon_2 , \varepsilon_3>0$ it holds that
\begin{equation}
\lim_{\varepsilon_1\rightarrow 0}\mathcal{I}_{\tilde{B}_{2/\varepsilon_3}}(\Psi_{\boldsymbol\varepsilon})=\mathcal{I}_{\tilde{B}_{2/\varepsilon_3}}(\Psi).
\end{equation}
\end{lemma}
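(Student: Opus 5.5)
Since $\Psi_{\boldsymbol\varepsilon}=\Psi$ outside $\tilde{\mathcal{A}}_{\boldsymbol\varepsilon}\cup\mathcal{A}_{\boldsymbol\varepsilon}$, the plan is to split $\mathcal{I}_{\tilde{B}_{2/\varepsilon_3}}$ into contributions from this exterior region and from the two axis regions. I will show that each axis-region contribution for $\Psi_{\boldsymbol\varepsilon}$, and the corresponding one for $\Psi$, tends to zero as $\varepsilon_1\to0$. Here $\mathcal{I}_{\tilde B_{2/\varepsilon_3}}$ is understood as the $\varsigma_1\to0$ limit of $\mathcal{I}$ over $\tilde{B}_{2/\varepsilon_3}\cap\{\rho>\varsigma_1\}$. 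For $\Psi$ itself, Proposition \ref{Prop:finiteness} shows that the integrand of \eqref{ReducedI} is integrable near the axis away from corners. Hence the portion of $\mathcal{I}(\Psi)$ over $\{\rho\le\sqrt{\varepsilon_1}\}\cap\tilde B_{2/\varepsilon_3}$ tends to zero by absolute continuity of the integral. It therefore suffices to prove that the contribution of $\Psi_{\boldsymbol\varepsilon}$ from $\tilde{\mathcal{A}}_{\boldsymbol\varepsilon}\cup\mathcal{A}_{\boldsymbol\varepsilon}$ vanishes in the limit.

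On $\mathcal{A}_{\boldsymbol\varepsilon}$ we have $\Psi_{\boldsymbol\varepsilon}=\Psi_o$. The integrand in \eqref{ReducedI} then vanishes identically, because the $\sinh^2$ terms cancel and all differences are zero, so this region contributes nothing. On $\tilde{\mathcal{A}}_{\boldsymbol\varepsilon}$ I will write $\Psi_{\boldsymbol\varepsilon}-\Psi_o=\varphi_{\varepsilon_1}(\Psi-\Psi_o)$ and estimate each term of the integrand. The main input is the axis asymptotics of Section \ref{sec7:asymptotics}. Since $g$ and $g_o$ share the same rod structure, their singular parts $\log\rho$ in $V$ cancel, and I expect $|V-V_o|+|W-W_o|=O(1)$ with $|\nabla(V-V_o)|+|\nabla(W-W_o)|=O(\rho^{\zeta-1})$ or better, uniformly on compact subsets of the rod interiors (corners are excluded by $r_n>\varepsilon_2$).

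The linear term $(V_{\varepsilon_1}-V_o)\Delta V_o$ is bounded by $|V-V_o||\Delta V_o|$, which is integrable, so its integral over the shrinking region tends to zero; the same holds for the $W$ term. For the gradient terms,
\begin{equation*}
|\nabla(V_{\varepsilon_1}-V_o)|^2\le 2|\nabla\varphi_{\varepsilon_1}|^2|V-V_o|^2+2|\nabla(V-V_o)|^2 .
\end{equation*}
The second piece is integrable and therefore harmless. The first piece is the key cut-off estimate: since $|\nabla\varphi_{\varepsilon_1}|=\bigl(\rho\log(1/\sqrt{\varepsilon_1})\bigr)^{-1}$, we get
\begin{equation*}
\int_{\tilde{\mathcal{A}}_{\boldsymbol\varepsilon}}|\nabla\varphi_{\varepsilon_1}|^2\,dx\le \frac{C}{(\log \varepsilon_1)^2}\int_{\varepsilon_1}^{\sqrt{\varepsilon_1}}\frac{\rho\,d\rho}{\rho^2}=\frac{C}{|\log\varepsilon_1|}\to0 .
\end{equation*}
This requires the boundedness of $V-V_o$, which is exactly what the logarithmic cut-off is designed to handle.

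The main obstacle is the nonlinear difference $\sinh^2W_{\varepsilon_1}|\nabla V_{\varepsilon_1}|^2-\sinh^2W_o|\nabla V_o|^2$, because $|\nabla V_o|\sim\rho^{-1}$ is not square integrable near the axis. I would first expand
\begin{equation*}
\nabla V_{\varepsilon_1}=\nabla V_o+\varphi_{\varepsilon_1}\nabla(V-V_o)+(V-V_o)\nabla\varphi_{\varepsilon_1}.
\end{equation*}
For the resulting cross terms with $\nabla V_o$ to be controlled, the axis asymptotics must give $W,W_o=O(\rho)$ on rods where the relevant component degenerates, or more precisely $\sinh W\,|\nabla V|$ bounded. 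Then $\sinh^2W_{\varepsilon_1}$ supplies a factor of $\rho^2$ that absorbs $|\nabla V_o|^2$ and the $|\nabla\varphi_{\varepsilon_1}|^2$ factor. I would also use $|\sinh^2W_{\varepsilon_1}-\sinh^2W_o|\le C\rho^{1+\zeta}$ to show that the difference is $O(\rho^{\zeta-1})$, hence integrable in $\rho\,d\rho$, so that its contribution vanishes as $\varepsilon_1\to0$. Verifying these rod-by-rod bounds from Section \ref{sec7:asymptotics}, including the choice of Fermi coordinates adapted to each rod structure, is the delicate step.
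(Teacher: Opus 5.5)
Your decomposition, the vanishing on $\mathcal{A}_{\boldsymbol\varepsilon}$, the logarithmic cut-off estimate for the gradient terms, and the treatment of the linear terms all match the paper's proof. The gap is in the nonlinear term $\sinh^2W_{\varepsilon_1}|\nabla V_{\varepsilon_1}|^2-\sinh^2W_o|\nabla V_o|^2$. Your mechanism has two parts: $\sinh^2W_{\varepsilon_1}=O(\rho^2)$ absorbs $|\nabla V_o|^2$ and $|\nabla\varphi_{\varepsilon_1}|^2$, and $|\sinh^2W_{\varepsilon_1}-\sinh^2W_o|\le C\rho^{1+\zeta}$. This is valid only on rods with $\mathbf{v}_n=(1,0)$, or with $\mathbf{v}_n=(0,1)$ and $\beta=0$.

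Now take a rod with $\mathbf{v}_n=(0,1)$ and $\beta\neq0$, or with $\mathbf{v}_n\in(\mathbb{Z}\setminus\{0\})^2$ (cases II$_\beta$ and III of Section \ref{sec7.axis}). There the logarithmic singularity sits in $W$, not in $V$: $|W|,|W_o|=|\log\rho|+O(1)$. So $\sinh^2W_{\varepsilon_1}$ and $\sinh^2W_o$ are of order $\rho^{-2}$, their difference is $O(\rho^{-2})$, and both of your bounds fail. Your remark that the $\log\rho$ singularities cancel ``in $V$'' reflects the same oversight.

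On those rods the estimate has to run the other way. $|\nabla V|$ and $|\nabla V_o|$ are small there: $O(\rho^{\zeta})$ in case II$_\beta$ and $O(\rho)$ in case III. Crucially, $V$ and $V_o$ tend on the rod to the same constant, fixed by $\mathbf{v}_n$ and $\beta\ell$, so $V-V_o=O_1(\rho^{1+\zeta})$ in case II$_\beta$ and $O_1(\rho^2)$ in case III.

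This extra vanishing is what controls the cut-off pieces $\sinh^2W_{\varepsilon_1}(V-V_o)^2|\nabla\varphi_{\varepsilon_1}|^2$ and $\sinh^2W_{\varepsilon_1}(V-V_o)\nabla\varphi_{\varepsilon_1}\cdot\nabla V_o$. It gives $\sinh^2W_{\varepsilon_1}|\nabla V_{\varepsilon_1}|^2=O(\rho^{2\zeta-2})$, which is the bound the paper uses. Your proposal records only $|V-V_o|=O(1)$. With that alone, the first piece is bounded merely by $C\rho^{-4}|\log\varepsilon_1|^{-2}$, whose integral over $\tilde{\mathcal{A}}_{\boldsymbol\varepsilon}$ is of order $\varepsilon_1^{-2}|\log\varepsilon_1|^{-2}$, so the argument does not close.

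Passing to Fermi coordinates adapted to each rod does not rescue this. Both $\Psi_{\boldsymbol\varepsilon}=\Psi_o+\varphi_{\varepsilon_1}(\Psi-\Psi_o)$ and the linear terms $(V-V_o)\Delta V_o+(W-W_o)\Delta W_o$ in $\mathcal{I}$ are defined through the fixed coordinates \eqref{Def:VW}. The $\mathbb{H}^2$-isometries induced by $SL(2,\mathbb{Z})$ changes of torus basis are in general not affine in these coordinates, so they do not preserve the linear interpolation or these terms. The estimates must therefore be carried out rod type by rod type in the given coordinates, as the paper does.
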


\begin{proof}
Write
\begin{equation}\label{94}
\mathcal{I}_{\tilde{B}_{2/\varepsilon_3}}(\Psi_{\boldsymbol{\varepsilon}})
=\mathcal{I}_{\tilde{\mathcal{A}}_{\boldsymbol{\varepsilon}}}(\Psi_{\boldsymbol{\varepsilon}})
+\mathcal{I}_{\mathcal{A}_{\boldsymbol{\varepsilon}}}(\Psi_{\boldsymbol{\varepsilon}})
+\mathcal{I}_{\tilde{B}_{2/\varepsilon_3}\setminus(\tilde{\mathcal{A}}_{\boldsymbol{\varepsilon}}
\cup\mathcal{A}_{\boldsymbol{\varepsilon}})}(\Psi_{\boldsymbol{\varepsilon}}),
\end{equation}
and observe that 
\begin{equation}\label{95}
\mathcal{I}_{\tilde{B}_{2/\varepsilon_3}\setminus \left(\tilde{\mathcal{A}}_{\boldsymbol{\varepsilon}}\cup \mathcal{A}_{\boldsymbol{\varepsilon}}\right)}(\Psi_{\boldsymbol{\varepsilon}})=\mathcal{I}_{\tilde{B}_{2/\varepsilon_3}\setminus \left(\tilde{\mathcal{A}}_{\boldsymbol{\varepsilon}}\cup \mathcal{A}_{\boldsymbol{\varepsilon}}\right)}(\Psi).
\end{equation}
Moreover $\Psi_{\boldsymbol{\varepsilon}}=\Psi_o$ on $\mathcal{A}_{\boldsymbol{\varepsilon}}$, so the reduced energy vanishes when restricted to $\mathcal{A}_{\boldsymbol{\varepsilon}}$. On the remaining region we have 
\begin{align}
\begin{split}
&2\mathcal{I}_{\tilde{\mathcal{A}}_{\boldsymbol{\varepsilon}}}(\Psi_{\boldsymbol{\varepsilon}})
=\underbrace{\int_{\tilde{\mathcal{A}}_{\boldsymbol{\varepsilon}}}
|\nabla ({V}_{\varepsilon_1}-V_o)|^{2}}_{I_{1}}+\underbrace{\int_{\tilde{\mathcal{A}}_{\boldsymbol{\varepsilon}}}
|\nabla ({W}_{\varepsilon_1}-W_o)|^{2}}_{I_{2}}\\
&+\underbrace{\int_{\tilde{\mathcal{A}}_{\boldsymbol{\varepsilon}}}
\left(\sinh^2{W}_{\varepsilon_1}|\nabla {V}_{\varepsilon_1}|^{2}-\sinh^2 {W}_o|\nabla V_o|^2\right)-2({V}_{\varepsilon_1}-V_o)\Delta V_o-2
({W}_{\varepsilon_1}-W_o)\Delta W_o}_{I_{3}}.
\end{split}
\end{align}
To estimate these expressions, it is helpful to decompose the region into connected components $\tilde{\mathcal{A}}_{\boldsymbol{\varepsilon}}=\cup_{n=1}^{N+1}\tilde{\mathcal{A}}_{\boldsymbol{\varepsilon}}^n$, where the annular cylinder $\tilde{\mathcal{A}}_{\boldsymbol{\varepsilon}}^n$ is associated with the rod $\Gamma_n$. In what follows, we will analyze each integral according to the asymptotics of Section \ref{sec7.axis} for the three different types of rod structure on $\Gamma_n$, namely: (I) $\mathbf{v}_n=(1,0)$, (II) $\mathbf{v}_n=(0,1)$, and (III) $\mathbf{v}_n=(v^1_n,v^2_n)\in(\mathbb{Z}\backslash \{0\})^2$. 
In all cases it holds that
\begin{equation}
|I_{1}|\leq \int_{\tilde{\mathcal{A}}_{\boldsymbol{\varepsilon}}^n}
\left(\underbrace{|\nabla (V-V_o)|^2}_{O(1)}+\underbrace{|V-V_o|^2}_{O(1)}
\underbrace{|\nabla\varphi_{\varepsilon_1}|^2}_{ O\left((\rho\log\varepsilon_1)^{-2}\right)}\right)\rho d\rho dz\rightarrow 0,
\end{equation} 
and similarly
\begin{equation}
\begin{split}
|I_{2}|\leq \int_{\tilde{\mathcal{A}}_{\boldsymbol{\varepsilon}}^n}
&\left(\underbrace{|\nabla (W-W_o)|^2}_{O(1)}+\underbrace{|W-W_o|^2}_{O(1)}
\underbrace{|\nabla\varphi_{\varepsilon_1}|^2}_{ O\left((\rho\log\varepsilon_1)^{-2}\right)}\right)\rho d\rho dz\rightarrow 0.
\end{split}
\end{equation}

Next consider $I_3$. For this integral, we will further decompose case II into two subcases: II$_0$ in which $\beta=0$, and II$_\beta$ in which $\beta\neq 0$. From Section \ref{sec7.axis} it follows that
\begin{equation}
\begin{split}
\sinh^2 {W}_{\varepsilon_1}-\sinh^2 {W}_o&=\sinh\left({W}_{\varepsilon_1}+{W}_o\right)\sinh\left({W}_{\varepsilon_1}-{W}_o\right)=\begin{cases}
    O(\rho^{2}) & \text{case I and II$_{0}$}\\
    O(\rho^{-2}) & \text{case II$_{\beta}$ and III}\\
\end{cases}\, ,
\end{split}
\end{equation}
and with the help of the harmonic map equations \eqref{ELeqn} we find
\begin{equation}
\begin{split}
\Delta W_o&=\sinh W_o\cosh W_o|\nabla V_o|^2=\begin{cases}
    O(\rho^{-1}) & \text{case I and II$_0$}\\
    O(\rho^{2\zeta-2}) & \text{case II$_{\beta}$}\\
    O(1) & \text{case III}\\
\end{cases},\\
\Delta V_o&=-2\tanh W_o\nabla W_o\cdot \nabla V_o=\begin{cases}
    O(1) & \text{case I and II$_0$}\\
    O(\rho^{\zeta -1}) & \text{case II$_{\beta}$}\\
    O(1) & \text{case III}\\
\end{cases}\,,
\end{split}
\end{equation}
where $\zeta>0$. Therefore, in all cases
\begin{equation}
\begin{split}
|I_{3}|&\leq \int_{\tilde{\mathcal{A}}_{\boldsymbol{\varepsilon}}^n}
\underbrace{\left|\sinh^2{W}_{\varepsilon_1}-\sinh^2 {W}_o\right||\nabla V_o|^2}_{O(\rho^{2\zeta-2})}\rho d\rho dz\\
&+\int_{\tilde{\mathcal{A}}_{\boldsymbol{\varepsilon}}^n}\underbrace{\sinh^2 {W}_{\varepsilon_1}|\nabla(V_{\varepsilon_1}-V_o)||\nabla (V_{\varepsilon_1}+V_o)|}_{O(\rho^{2\zeta-2})}\rho d\rho dz\\
&+2\int_{\tilde{\mathcal{A}}_{\boldsymbol{\varepsilon}}^n}\left(\underbrace{|V-V_o||\Delta V_o|}_{ O(1)}+\underbrace{|W-W_o||\Delta W_o|}_{ O(\rho^{2\zeta-2})}\right)\rho d\rho dz.
\end{split}
\end{equation}
Hence, $\mathcal{I}_{\tilde{\mathcal{A}}_{\boldsymbol{\varepsilon}}}(\Psi_{\boldsymbol{\varepsilon}})\rightarrow 0$
and the desired is obtained.
\end{proof}

\begin{proof}[Proof of Theorem \ref{thm.convexity}]
Let $\Psi,\Psi_o:\mathbb{R}^3\setminus\Gamma\to\mathbb{H}^2$ be as in the statement of this theorem, and consider the cut-and-paste map $\Psi_{\boldsymbol{\varepsilon}}$ for $\boldsymbol{\varepsilon}>0$. Let $\Psi_{\boldsymbol{\varepsilon}}^t=(V^t_{\boldsymbol{\varepsilon}},W^t_{\boldsymbol{\varepsilon}})$ be the geodesic deformation from $\Psi_{o}$ to $\Psi_{\boldsymbol{\varepsilon}}$ in $\mathbb{H}^2$. In particular, for each $x\in\mathbb{R}^3\setminus\Gamma$ we have that $\Psi^{\bullet}_{\boldsymbol{\varepsilon}}(x):[0,1]\to\mathbb{H}^2$ is the geodesic with 
$\Psi^1_{\boldsymbol{\varepsilon}}(x)=\Psi_{\boldsymbol{\varepsilon}}(x)$ and $\Psi^0_{\boldsymbol{\varepsilon}}(x)=\Psi_o(x)$. Let $\boldsymbol{\varepsilon}>\boldsymbol{\varsigma}>0$
and observe that the second variation of energy \cite[(2.4)]{schoen2013convexity} yields
\begin{equation}
\frac{d^{2}}{dt^{2}}E_{\Omega_{\boldsymbol{\varsigma}}}(\Psi_{\boldsymbol{\varepsilon}}^{t})
\geq 2\int_{\Omega_{\boldsymbol{\varsigma}}}|\nabla\operatorname{dist}_{\mathbb{H}^2}(\Psi_{\boldsymbol{\varepsilon}},\Psi_{0})|^{2}dx.
\end{equation}
Combining this with \eqref{relationenergy} produces
\begin{equation}
\begin{split}
\frac{d^{2}}{dt^{2}}\mathcal{I}_{\Omega_{\boldsymbol{\varsigma}}}(\Psi^{t}_{\boldsymbol{\varepsilon}})
&=\frac{d^{2}}{dt^{2}}E_{\Omega_{\boldsymbol{\varsigma}}}
(\Psi^{t}_{\boldsymbol{\varepsilon}})-
\frac{d^{2}}{dt^{2}}\int_{\partial\Omega_{\boldsymbol{\varsigma}}}\left(\nu(V_o)\left(V^{t}_{\boldsymbol{\varepsilon}}-V_o\right)+\nu(W_o)\left(W^{t}_{\boldsymbol{\varepsilon}}-W_o\right)\right)dA\\
&\geq  2\int_{\Omega_{\boldsymbol{\varsigma}}}|\nabla\operatorname{dist}_{\mathbb{H}^2}(\Psi_{\boldsymbol{\varepsilon}},\Psi_{0})|^{2}dx-\int_{\partial\Omega_{\boldsymbol{\varsigma}}}\left(\nu(V_o)\ddot{V}^t_{\boldsymbol{\varepsilon}}+\nu(W_o)\ddot{W}^{t}_{\boldsymbol{\varepsilon}}\right)dA,
\end{split}
\end{equation}
where the `dot' derivatives are with respect to $t$. Now integrate from $0$ to $t$ to find
\begin{equation}\label{eq48}
\begin{split}
\frac{d}{dt}\mathcal{I}_{\Omega_{\boldsymbol{\varsigma}}}(\Psi^{t}_{\boldsymbol{\varepsilon}})-\frac{d}{dt}\mathcal{I}_{\Omega_{\boldsymbol{\varsigma}}}(\Psi^{t}_{\boldsymbol{\varepsilon}})|_{t=0}
&\geq 2t\int_{\Omega_{\boldsymbol{\varsigma}}}|\nabla\operatorname{dist}_{\mathbb{H}^2}(\Psi_{\boldsymbol{\varepsilon}},\Psi_{0})|^{2}dx\\
&\quad -\int_{\partial\Omega_{\boldsymbol{\varsigma}}}\left(\nu(V_o)(\dot{V}^t_{\boldsymbol{\varepsilon}}-\dot{V}^0_{\boldsymbol{\varepsilon}})+\nu(W_o)(\dot{W}^t_{\boldsymbol{\varepsilon}}-\dot{W}^0_{\boldsymbol{\varepsilon}})\right)dA.
\end{split}
\end{equation}
On the other hand, from the first variation of \eqref{relationenergy} we have
\begin{equation}
\begin{split}
\frac{d}{dt}\mathcal{I}_{\Omega_{\boldsymbol{\varsigma}}}(\Psi^{t}_{\boldsymbol{\varepsilon}})&=-\int_{\Omega_{\boldsymbol{\varsigma}}}
\dot{V}^t_{\boldsymbol{\varepsilon}}\mathrm{div}\left(\cosh^2W^{t}_{\boldsymbol{\varepsilon}}\nabla V^{t}_{\boldsymbol{\varepsilon}}\right)\,dx\\
&\quad-\int_{\Omega_{\boldsymbol{\varsigma}}}
\dot{W}^t_{\boldsymbol{\varepsilon}}\left(\Delta W^{t}_{\boldsymbol{\varepsilon}}-\sinh W^{t}_{\boldsymbol{\varepsilon}}\cosh W^{t}_{\boldsymbol{\varepsilon}}|\nabla V^{t}_{\boldsymbol{\varepsilon}}|^2\right)\,dx\\
&\quad+\int_{\partial \Omega_{\boldsymbol{\varsigma}}}\left(\dot{W}^t_{\boldsymbol{\varepsilon}}\left(\nu(W^{t}_{\boldsymbol{\varepsilon}})-\nu(W_o)\right)+\dot{V}^t_{\boldsymbol{\varepsilon}}\left(\nu(V^{t}_{\boldsymbol{\varepsilon}})\cosh^2 W^{t}_{\boldsymbol{\varepsilon}}-\nu(V_o)\right)\right)dA.
\end{split}
\end{equation}
Since $\Psi^t_{\boldsymbol{\varepsilon}}$ at $t=0$ is a harmonic map and satisfies \eqref{ELeqn}, it follows that
\begin{equation}\label{eq410}
\begin{split}
\frac{d}{dt}\mathcal{I}_{\Omega_{\boldsymbol{\varsigma}}}(\Psi^{t}_{\boldsymbol{\varepsilon}})|_{t=0}&=\int_{\partial \Omega_{\boldsymbol{\varsigma}}}\dot{V}^0_{\boldsymbol{\varepsilon}}\nu(V_o)\sinh^2 W_o dA.
\end{split}
\end{equation}
Putting this together with \eqref{eq48} gives rise to
\begin{equation}
\begin{split}
\frac{d}{dt}\mathcal{I}_{\Omega_{\boldsymbol{\varsigma}}}(\Psi^{t}_{\boldsymbol{\varepsilon}})
&\geq 2t\int_{\Omega_{\boldsymbol{\varsigma}}}|\nabla\operatorname{dist}_{\mathbb{H}^2}(\Psi_{\boldsymbol{\varepsilon}},\Psi_{0})|^{2}dx\\
&\qquad-\int_{\partial\Omega_{\boldsymbol{\varsigma}}}\left(\nu(V_o)(\dot{V}^t_{\boldsymbol{\varepsilon}}-\dot{V}^0_{\boldsymbol{\varepsilon}}-\dot{V}^0_{\boldsymbol{\varepsilon}}\sinh^2 W_o)+\nu(W_o)(\dot{W}^t_{\boldsymbol{\varepsilon}}-\dot{W}^0_{\boldsymbol{\varepsilon}})\right)dA.
\end{split}
\end{equation}
Now integrate again from $0$ to $1$ and use the fact that $\mathcal{I}_{\Omega_{\boldsymbol{\varsigma}}}(\Psi^{0}_{\boldsymbol{\varepsilon}})=0$ to obtain
\begin{equation}\label{boundaryenergy}
\begin{split}
\mathcal{I}_{\Omega_{\boldsymbol{\varsigma}}}(\Psi_{\boldsymbol{\varepsilon}})
&\geq \int_{\Omega_{\boldsymbol{\varsigma}}}|\nabla\operatorname{dist}_{\mathbb{H}^2}(\Psi_{\boldsymbol{\varepsilon}},\Psi_{0})|^{2}dx\\
&\quad\underbrace{-\int_{\partial\Omega_{\boldsymbol{\varsigma}}}\left(\nu(V_o)(V_{\boldsymbol{\varepsilon}}-V_o-\dot{V}^0_{\boldsymbol{\varepsilon}}\cosh^2W_o)+\nu(W_o)(W_{\boldsymbol{\varepsilon}}-W_o-\dot{W}^0_{\boldsymbol{\varepsilon}})\right)dA}_{:=I_{\partial\Omega_{\boldsymbol{\varsigma}}}}\,.
\end{split}
\end{equation}

We will use the geodesic equations and distance function in the hyperbolic plane to estimate each term within $I_{\partial\Omega_{\boldsymbol{\varsigma}}}$. Applying a Taylor expansion about $\Psi^0_{\boldsymbol{\varepsilon}}=\Psi_o$ yields
\begin{equation}\label{taylor1}
\Psi^t_{\boldsymbol{\varepsilon}}=\Psi_o+\dot{\Psi}^0_{\boldsymbol{\varepsilon}}+\frac{1}{2}\ddot{\Psi}^{t'}_{\boldsymbol{\varepsilon}},\qquad \text{for some } t'\in(0,1)\,,
\end{equation}
while the geodesic equations may be written explicitly as
\begin{equation}\label{gedesic}
\ddot{V}^{t}_{\boldsymbol{\varepsilon}}+2\tanh W^{t}_{\boldsymbol{\varepsilon}}\,\dot{V}^{t}_{\boldsymbol{\varepsilon}}\dot{W}^{t}_{\boldsymbol{\varepsilon}}=0,\qquad \ddot{W}^{t}_{\boldsymbol{\varepsilon}}-\sinh W^{t}_{\boldsymbol{\varepsilon}}\cosh W^{t}_{\boldsymbol{\varepsilon}}\,\left(\dot{V}^{t}_{\boldsymbol{\varepsilon}}\right)^2=0\,.
\end{equation}
Moreover, since for each $x\in\mathbb{R}^3\setminus\Gamma$ the geodesic $\Psi^t_{\boldsymbol{\varepsilon}}(x)$ has constant velocity and there is a unique minimizing geodesic between any two points of $\mathbb{H}^2$, it holds that
\begin{equation}\label{distancelenght}
\mathbf{d}^2:=\mathrm{dist}^2_{\mathbb{H}^2}\left(\Psi_{\boldsymbol{\varepsilon}}, \Psi_o\right)= \left(\dot{W}^t_{\boldsymbol{\varepsilon}}\right)^2+\cosh^2 W^t_{\boldsymbol{\varepsilon}}\left(\dot{V}^t_{\boldsymbol{\varepsilon}}\right)^2\,\qquad \text{for all } t\in[0,1]\,.
\end{equation}
Then the boundary integral of \eqref{boundaryenergy} may be rewritten with the Taylor expansion \eqref{taylor1} and geodesic equations \eqref{gedesic} by
\begin{equation}\label{4.28}
I_{\partial\Omega_{\boldsymbol{\varsigma}}}\!=\!\int_{\partial\Omega_{\boldsymbol{\varsigma}}}\!\!\!\left(\!\nu(V_o)\left(\dot{V}^0_{\boldsymbol{\varepsilon}}\sinh^2W_o+\tanh W_{\boldsymbol{\varepsilon}}^{t'}\dot{V}^{t'}_{\boldsymbol{\varepsilon}}\dot{W}^{t'}_{\boldsymbol{\varepsilon}}\right)
\!-\!\frac{1}{2}\nu(W_o)\sinh W_{\boldsymbol{\varepsilon}}^{t'}\cosh W_{\boldsymbol{\varepsilon}}^{t'}\left(\dot{V}^{t'}_{\boldsymbol{\varepsilon}}\right)^2\right)dA,
\end{equation}
and therefore
\begin{equation}\label{4.29}
|I_{\partial\Omega_{\boldsymbol{\varsigma}}}|\leq \int_{\partial\Omega_{\boldsymbol{\varsigma}}}
\left(|\nu(V_o)|\left(|\sinh W_o|\mathbf{d}
+\mathbf{d}^2\right)+\frac{1}{2}|\nu(W_0)|
\mathbf{d}^2\right)dA.
\end{equation}

Recall the distance function in the hyperbolic plane 
\begin{equation}
\cosh \mathbf{d}=\cosh (W_{\boldsymbol{\varepsilon}}-W_o)\cosh(V_{\boldsymbol{\varepsilon}}-V_o)+\sinh W_{\boldsymbol{\varepsilon}}\sinh W_o\left(\cosh(V_{\boldsymbol{\varepsilon}}-V_o)-1\right).
\end{equation}
Using the asymptotics of Section \ref{sec7:infinity} for the asymptotic end, we then obtain decay rates
for the distance function
\begin{equation}\label{gfobain0qhh}
\mathbf{d}=
\begin{cases}
    O(r^{-1-\kappa}) & \text{ALE}\\
    O(r^{-\frac{1}{2}-\kappa}) & \text{ALF}\\
    O(r^{-\frac{1}{2}-\kappa}) & \text{AF$_{\beta\ell}$}\\
\end{cases}\,.
\end{equation}
Similarly, using Section \ref{sec7:corner} near each corner $z_n \in \Gamma$ we find
\begin{equation}
\mathbf{d}= O(|\log\sin2\theta|),
\end{equation}
as $r_n \rightarrow 0$. Next, observe that $\partial\Omega_{\boldsymbol{\varsigma}}$ may be decomposed into disjoint portions contained in the three regions: $\partial\mathcal{A}_{\boldsymbol{\varsigma}}$, $\cup_{n=1}^{N}\partial B_{\varsigma_2}(z_n)$, and $\partial B_{2/\varsigma_3}$. Since $\varsigma_1 < \varepsilon_1$ we have that $\Psi_{\boldsymbol{\varepsilon}}=\Psi_o$ on $\partial\mathcal{A}_{\boldsymbol{\varsigma}}\cap \partial\Omega_{\boldsymbol{\varsigma}}$, and thus this portion of the integral \eqref{4.28} vanishes. Moreover, applying 
\eqref{4.29} and the distance function estimates, along with the asymptotics of Sections \ref{sec7:infinity} and \ref{sec7:corner}, we find that over the remaining two regions the integral tends to zero as $\varsigma_2, \varsigma_3 \rightarrow 0$. 
Therefore, with the aid of the Sobolev inequality it follows that
\begin{equation}\label{79}
\begin{split}
\mathcal{I}_{\Omega_{\boldsymbol{\varsigma}}}(\Psi_{\boldsymbol{\varepsilon}})
\geq C\left(\int_{\Omega_{\boldsymbol{\varsigma}}}\operatorname{dist}^6_{\mathbb{H}^2}(\Psi_{\boldsymbol{\varepsilon}},\Psi_{o})dx\right)^{1/3}+o(1),
\end{split}
\end{equation}
for some constant $C>0$ independent of $\boldsymbol{\varsigma}$ and $\boldsymbol{\varepsilon}$. It should be noted that error $o(1)$ is independent of $\varepsilon_1$ as well as $\varsigma_1$ and converges to zero when $\varsigma_2,\varsigma_3\to 0$. 

We will now take a series of limits to arrive at the desired conclusion. First note that the integrand on the right-hand side of \eqref{79} vanishes on $\mathcal{A}_{\boldsymbol{\varsigma}}$, so that this integral may be taken over $\tilde{B}_{2/\varsigma_3}$. Moreover, on the left-hand side, by Proposition \ref{Prop:finiteness} we may take the limit as $\varsigma_1\rightarrow 0$ to obtain the reduced energy over this same domain. Next take the liminf on both sides as $\varepsilon_1 \rightarrow 0$, and apply Lemma \ref{lemma3.4} as well as Fatou's lemma to find
\begin{equation}
\mathcal{I}_{\tilde{B}_{2/\varsigma_3}}(\Psi)\geq C\left(\int_{\tilde{B}_{2/\varsigma_3}}\operatorname{dist}^6_{\mathbb{H}^2}(\Psi,\Psi_{o})dx\right)^{1/3}+o(1),
\end{equation}
where we have used that the error terms are uniform in $\varepsilon_1$ and $\varsigma_1$. Now take the liminf on both sides, first as $\varsigma_2\rightarrow 0$ and then as $\varsigma_3\rightarrow 0$, utilizing Proposition \ref{Prop:finiteness} on the left-hand side and Fatou's lemma again on the right-hand side to obtain
\begin{equation}
\mathcal{I}(\Psi)\geq C\left(\int_{\mathbb{R}^3}\operatorname{dist}^6_{\mathbb{H}^2}(\Psi,\Psi_{o})dx\right)^{1/3}.
\end{equation}

\end{proof}

\section{The Boundary Integrals} 
\label{sec5} \setcounter{equation}{0}
\setcounter{section}{5}

The purpose of this section is to investigate the boundary integrals appearing in \eqref{Bon1}. The axis integral will be treated first. In particular, using the asymptotics of Section \ref{sec7.axis}, it is shown that the limit of this integral may be expressed in terms of logarithmic angle defects. Throughout, $(M,g)$ will denote a simply connected toric ALE or toric ALF manifold, possibly having conical singularities and corners along finite axes, and $(M,g_o)$ will denote the corresponding toric gravitational instanton sharing the same asymptotic ALE or ALF structure, and the same rod data set consisting of intervals 
$\{\Gamma_n\}_{n=1}^{N+1}$. 

\begin{lemma}\label{lem4.1}
Let $\pmb{\vartheta}^n$ and $\pmb{\vartheta}^n_o$, $n=1,\cdots,N+1$ be the logarithmic angle defects on axis rod $\Gamma_n$ for  $(M,g)$ and $(M,g_o)$, respectively, then 
\begin{equation}
\lim_{\varsigma_3\to 0}\lim_{\varsigma_2\to 0}\lim_{\varsigma_1\to 0}\mathcal{B}^{\boldsymbol{\varsigma}}_\text{axis}=-4\pi\sum_{n=1}^{N+1}\int_{\Gamma_n}(\pmb{\vartheta}^n -\pmb{\vartheta}^n_o) dz.
\end{equation}
\end{lemma}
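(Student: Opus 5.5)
The boundary $\partial\mathcal{A}_{\boldsymbol\varsigma}\cap\partial\Omega_{\boldsymbol\varsigma}$ is the cylinder $\{\rho=\varsigma_1\}$, truncated by $r<2/\varsigma_3$ and by the balls $B_{\varsigma_2}(z_n)$. Its outward unit normal with respect to $\Omega_{\boldsymbol\varsigma}$ is $\nu=-\partial_\rho$, and its area element is $dA=\varsigma_1\,d\varphi\,dz$. Integrating out $\varphi$ gives
\begin{equation*}
\mathcal{B}^{\boldsymbol\varsigma}_{\text{axis}}=-2\pi\sum_{n=1}^{N+1}\int_{\Gamma_n^{\boldsymbol\varsigma}}\varsigma_1\, X(\partial_\rho)\big|_{\rho=\varsigma_1}\,dz,
\end{equation*}
where $\Gamma_n^{\boldsymbol\varsigma}$ is the portion of $\Gamma_n$ that survives the truncations. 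I would expand $X(\partial_\rho)$ from \eqref{defX} term by term:
\begin{equation*}
-2\partial_\rho(\alpha-\alpha_o+Z)+\frac{2\alpha-2\alpha_o-Z}{\rho}-(V-V_o)\partial_\rho V_o-(W-W_o)\partial_\rho W_o .
\end{equation*}
After multiplying by $\rho=\varsigma_1$, the only term expected to survive as $\varsigma_1\to0$ is $2(\alpha-\alpha_o)-Z$.

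The key steps use the axis asymptotics of Section \ref{sec7.axis}.

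First, $\alpha,\alpha_o,Z$ extend to be bounded with bounded $\rho$-derivatives up to the interior of each rod, and $Z$ is in fact $O(\rho^{1+\zeta})$-close to its boundary value. Hence $\rho\,\partial_\rho(\alpha-\alpha_o+Z)\to0$ uniformly on compact subsets of the rod interior.

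Second, $V-V_o$ and $W-W_o$ are bounded (the cut-and-paste argument already used $|V-V_o|,|W-W_o|=O(1)$). Also $\rho|\partial_\rho V_o|$ and $\rho|\partial_\rho W_o|$ are $O(1)$, and in fact the product tends to $0$. The reason is that the $\log\rho$ singular parts of $V$ and $V_o$ (respectively $W$ and $W_o$) coincide, since both maps share the same rod structure $\mathbf{v}_n$. Consequently the differences are $O(\rho^{\zeta})$ near the axis, up to a function of $z$ that is killed by $\rho\partial_\rho V_o=O(1)$ only if the difference actually vanishes. This point must be checked separately in cases I, II$_0$, II$_\beta$, and III, exactly as in Lemma \ref{lemma3.4}.

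Third, the remaining term is computed from \eqref{coneangleaxis}. Writing $G=\rho e^{Z}\Phi$, we have $\rho^2e^{2\alpha}/(G_{ij}v^iv^j)\to e^{2\pmb{\vartheta}^n}$. Comparing with the same expression for $g_o$, for which $Z_o=0$ by Proposition \ref{detG=rho}, the limit of $2(\alpha-\alpha_o)-Z$ on $\Gamma_n$ equals
\begin{equation*}
2(\pmb{\vartheta}^n-\pmb{\vartheta}^n_o)+\lim_{\rho\to0}\bigl[\log(e^{Z}\Phi_{ij}v^iv^j)-\log(\Phi^o_{ij}v^iv^j)\bigr]-Z .
\end{equation*}
To make this work I need $\Phi_{ij}v^iv^j/\Phi^o_{ij}v^iv^j\to1$ and $Z|_{\Gamma}=0$. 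The first relation is the same statement as the matching of singular parts of $V,W$ above. For $Z$, the fact that $\det G/\rho^2\to$ const along a regular or conical axis should follow from the cone model \eqref{conem}, whose determinant in Brill coordinates is $\rho^2$ times a normalization; I would verify this against Section \ref{sec7.axis}. Granting both, the pointwise limit of the integrand is $-2\pi\cdot2(\pmb{\vartheta}^n-\pmb{\vartheta}^n_o)$, which gives the claimed $-4\pi\sum_n\int_{\Gamma_n}(\pmb{\vartheta}^n-\pmb{\vartheta}^n_o)\,dz$ on each truncated rod.

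Finally, the limits $\varsigma_2\to0$ and $\varsigma_3\to0$ are handled by dominated convergence. Near corners the integrand is bounded by the corner asymptotics (logarithmic at worst in $\theta$, hence integrable in $z$). On the semi-infinite rods the integrability of $\pmb{\vartheta}^n-\pmb{\vartheta}^n_o$ follows from the decay \eqref{decay12}, and the same bound controls the error terms. I expect the main obstacle to be the case analysis showing that the cross terms $(V-V_o)\rho\partial_\rho V_o$ and $(W-W_o)\rho\partial_\rho W_o$ vanish in the limit, together with the normalization $Z\to0$ on the axis. These are exactly the places where the fine axis asymptotics, and the fact that $g$ and $g_o$ share rod structures, must be used.
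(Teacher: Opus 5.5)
There is a genuine gap, and it sits exactly at the step you flagged as the main obstacle. The cross terms do not vanish in the limit, and $\Phi_{ij}v_n^iv_n^j/\Phi^o_{ij}v_n^iv_n^j$ does not tend to $1$.

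The maps $\Psi$ and $\Psi_o$ share only the singular $\pm\log\rho$ behaviour dictated by $\mathbf{v}_n$. The finite part along the rod is geometric data of each metric and is not prescribed. Indeed, $\Psi_o$ is fixed by the rod data and asymptotic structure alone, since it is unique among maps asymptotic to the model, so it cannot be adjusted to match $\Psi$.

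Concretely, in Case I the asymptotics of Section \ref{sec7.axis} give $V=\log(\rho\sqrt{c_{11}/c_{22}})+O_1(\rho^{1+\zeta})$. The analogous formula holds for $V_o$ with its own coefficients $c^o_{11},c^o_{22}$. Hence $\rho\partial_\rho V_o\to1$, while $V-V_o\to\frac12\log\frac{c_{11}c^o_{22}}{c_{22}c^o_{11}}$, which is nonzero in general. Likewise $\Phi_{11}/\Phi^o_{11}\to e^{\lim(V-V_o)}$.

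For an explicit instance, take the Reissner--Nordstr\"om/Schwarzschild comparison of Section \ref{sec8}. There one finds $\Phi_{22}/\Phi^o_{22}\to 4z_2^2/(z_2+\sqrt{z_2^2+\ell^2c_1})^2\neq1$ on $\Gamma_2$ whenever $c_1\neq0$. So the limit of $\rho X(\partial_\rho)$ is not $2(\alpha-\alpha_o)-Z$, and the two facts you planned to verify in the case analysis are false.

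Your requirement $Z|_{\Gamma}=0$ is also false. In Case I, $Z\to\frac12\log(c_{11}c_{22})$, and Brill coordinates for a non-Ricci-flat $g$ impose no normalization of $\det G/\rho^2$. This one is harmless, though, because $Z$ enters $\rho X(\nu)$ and \eqref{regphi} identically and cancels.

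The missing idea is that these two nonzero discrepancies cancel each other, and this cancellation is the actual content of the lemma. On each rod exactly one component carries the logarithmic singularity: $V$ in Cases I and II$_0$, and $W$ in Cases II$_\beta$ and III. For that component $\rho\partial_\rho V_o\to\pm1$ (resp.\ $\rho\partial_\rho W_o\to\pm1$), while the other cross term is genuinely $o(1)$. Hence $\rho X(\nu)\to-(2\alpha-2\alpha_o-Z)\pm(V-V_o)$, resp.\ $\pm(W-W_o)$.

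You must then rewrite $\log(\Phi_{ij}v_n^iv_n^j)$ in terms of $(V,W)$ case by case. In Cases I, II$_0$, II$_\beta$ and III it equals, respectively:
\begin{itemize}
\item $V+o(1)$;
\item $-V+o(1)$;
\item $W+\log(2\beta\ell)+o(1)$;
\item $-V-\log\cosh W+2\log|v_n^2|+O(\rho^2)$.
\end{itemize}
The last uses $\Phi_{ij}v^iv^j=\big((v^2)^2+(\Phi_{11}v^1+\Phi_{12}v^2)^2\big)/\Phi_{11}$ together with $v^1\Phi_{11}+v^2\Phi_{12}=O(\rho)$.

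Substituting these into \eqref{regphi} for $g$ and for $g_o$ (with $Z_o=0$) shows that $2(\pmb{\vartheta}^n-\pmb{\vartheta}^n_o)$ is exactly the negative of the surviving limit. That is, $X_{\text{axis}}=-2(\pmb{\vartheta}^n-\pmb{\vartheta}^n_o)$. Your final formula comes out right only because your two incorrect claims offset one another.

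A minor point: on $\Gamma_1$ and $\Gamma_{N+1}$ the integrand vanishes because neither metric has conical singularities there. This is not a consequence of the decay \eqref{decay12}.
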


\begin{proof}
Observe that \eqref{defX} yields  
\begin{equation}
X(\nu)=2\partial_\rho(\alpha-\alpha_o+Z)-(2\alpha-2\alpha_o-Z)\partial_\rho\log\rho+\left(V-V_o\right)\partial_\rho V_o+\left(W-W_o\right)\partial_\rho W_o,
\end{equation}
where we have used $\nu=-\partial_\rho$. Each term of $X(\nu)$ has potentially different asymptotics depending on the type of rod structure at the axis. In particular, there are three rod structure cases to consider on $\Gamma_n$: (I) $\mathbf{v}_n=(1,0)$, (II) $\mathbf{v}_n=(0,1)$, (III) $\mathbf{v}_n=(v^1_n,v^2_n)\in(\mathbb{Z}\backslash \{0\})^2$. The asymptotics of Section \ref{sec7.axis} imply  
\begin{align}
\begin{split}
\text{Case I:}&\qquad X(\nu)\rho =-2\alpha+2\alpha_o+Z+\left(V-V_o\right)+O(\rho^{\min\{1,\zeta\}}),\\
\text{Case II$_0$:}&\qquad X(\nu)\rho =-2\alpha+2\alpha_o+Z-\left(V-V_o\right)+O(\rho^{\min\{1,\zeta\}}),\\
\text{Case II$_{\beta \ell }$:}&\qquad X(\nu)\rho =-2\alpha+2\alpha_o+Z+\left(W-W_o\right)+O(\rho^{\min\{1,\zeta\}}),\\
\text{Case III:}&\qquad X(\nu)\rho =-2\alpha+2\alpha_o+Z-\left(W-W_o\right)+O(\rho),
\end{split}
\end{align}
where the leading terms have well-defined finite limits as $\rho\rightarrow 0$ away from corner points; the sum of leading terms will be denoted by $X_\text{axis}$. Moreover, the asymptotics of Sections \ref{sec7.axis} and \ref{sec7:corner} guarantee that each integral over a finite rod is well-defined and finite. The integral over semi-infinite axis rods $\Gamma_1$ and $\Gamma_{N+1}$ will be shown to vanish, due to the absence of conical singularities.
We may then write
\begin{equation}\label{int1}
\begin{split}
\lim_{\varsigma_3\to 0}\lim_{\varsigma_2\to 0}\lim_{\varsigma_1\to 0}\mathcal{B}^{\boldsymbol{\varsigma}}_\text{axis}&=\lim_{\varsigma_3\to 0}\lim_{\varsigma_2\to 0}\lim_{\varsigma_1\to 0}\int_{\partial \mathcal{A}_{\boldsymbol{\varsigma}}\cap\partial\Omega_{\boldsymbol{\varsigma}}}\!\!\!\!\!\!\!\!\!\! X(\nu) \, dA\\
&=2\pi\sum_{n=1}^{N+1}\int_{\Gamma_n}X_\text{axis}\, dz.
\end{split}
\end{equation}

Let $\mathbf{v}_n=(v^1_n,v^2_n)\in\mathbb{Z}^2$ be the rod structure for $\Gamma_n$.
Then according to the definition \eqref{coneangleaxis} of logarithmic angle defect, at interior points of $\Gamma_n$ we have 
\begin{equation}\label{re1}
\begin{split}
   e^{\pmb{\vartheta}^n}=&\lim_{\rho\to 0}\frac{\rho e^{\alpha}}{\sqrt{G_{ij}v_n^iv_n^j}}=\lim_{\rho\to 0}\frac{e^{\alpha}}{ \exp\left(\frac{1}{2}Z-\frac{1}{2}\log\rho+\frac{1}{2}\log(\Phi_{ij}v^i_nv^j_n)\right)},
\end{split}
\end{equation}
and therefore
\begin{equation}\label{regphi}
2\pmb{\vartheta}^n= \lim_{\rho\rightarrow 0}\left(  2\alpha-Z+\log\rho-\log(\Phi_{ij}v^i_nv^j_n)\right).
\end{equation}
This limit exists and is finite according to the asymptotics of Section \ref{sec7.axis}. Moreover,
this implies that in
\begin{align}
\begin{split}
\text{Case I:}&\qquad 2\pmb{\vartheta}^n=\lim_{\rho\rightarrow 0}\left(  2\alpha-Z+\log\rho-V\right),\\
\text{Case II$_0$:}&\qquad 2\pmb{\vartheta}^n= \lim_{\rho\rightarrow 0}\left( 2\alpha-Z+\log\rho+V\right),\\
\text{Case II$_{\beta \ell }$:}&\qquad 2\pmb{\vartheta}^n=\lim_{\rho\rightarrow 0}\left(  2\alpha-Z+\log\rho-W -\log(2\beta \ell)\right).
\end{split}
\end{align}
To obtain the expression in case III recall that $\textbf{v}_n\in\text{Ker } G$. Then with the help 
of asymptotics from Section \ref{sec7.axis}, upon approach to interior points of $\Gamma_n$ it holds that
\begin{equation}
v_n^1\Phi_{11}+v_n^2\Phi_{12}=O(\rho),\quad\quad \quad c(z)^{-1}\leq \rho\Phi_{11}\leq c(z),
\end{equation}
where $c(z)$ is a positive function. Next, observe that an algebraic manipulation using $\det\Phi=1$ yields
\begin{equation}
\Phi_{ij}v_n^i v_n^j=\frac{(v_n^2)^2}{\Phi_{11}}+\frac{(\Phi_{11}v_n^1 +\Phi_{12} v_n^2)^2}{\Phi_{11}}.
\end{equation}
Since $v_n^2 \neq 0$ we then have
\begin{equation}
\log\left(\Phi_{ij}v^i_nv^j_n\right)= -V-\log\cosh W +2\log |v_n^2|+O(\rho^2),
\end{equation}
and hence
\begin{equation}\label{reg3}
\text{Case III:}\qquad 2\pmb{\vartheta}^n=\lim_{\rho\rightarrow 0}\left(  2\alpha-Z+\log\rho+V+\log\cosh W -2\log |v_n^2|\right).
\end{equation}

Similar statements hold that relate the harmonic map $\Psi_o=(V_o,W_o)$ to the corresponding logarithmic angle defect $\pmb{\vartheta}^n_o$. Combining these formulas shows that
\begin{equation}
\begin{split}
X_\text{axis}&=-2\left(\pmb{\vartheta}^n-\pmb{\vartheta}^n_o\right).
\end{split}
\end{equation}
The desired result then follows from \eqref{int1}.
\end{proof}


\begin{lemma}\label{lem4.2}
The boundary integral about corners vanishes, that is 
\begin{equation}
\lim_{\varsigma_3\to 0}\lim_{\varsigma_2\to 0}\lim_{\varsigma_1\to 0}\mathcal{B}^{\boldsymbol{\varsigma}}_\text{corner}=0.
\end{equation}
\end{lemma}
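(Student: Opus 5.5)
We treat each corner $z_n$ separately, working in the radial Brill coordinates $(r_n,\theta)$ centered at $z_n$ given by \eqref{cornerrho}. The plan is to show that the integrand $X(\nu)$ on $\partial B_{\varsigma_2}(z_n)\cap\partial\Omega_{\boldsymbol\varsigma}$ is at most mildly singular near the axis, integrate in $\theta$, and then multiply by the area element. In the flat metric $\delta_3$, the sphere $\partial B_{\varsigma_2}(z_n)$ has area element of order $\varsigma_2^2\sin\theta\,d\theta\,d\varphi$ in the relevant coordinate, or the corresponding expression after the change of variables \eqref{cornerrho}; for example, if $\rho = r_n^2\sin 2\theta/2$ the area element scales like $r_n^{3}$ times a bounded angular density. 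The boundary portion excludes the tubes $\rho\le\varsigma_1$, so after $\varsigma_1\to0$ the integral runs over the full sphere minus the two axis points. The aim is to show that the limit of $\int X(\nu)\,dA$ as $\varsigma_1\to0$ is $O(\varsigma_2^{\delta})$ for some $\delta>0$, and hence vanishes as $\varsigma_2\to0$.

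\textbf{Step 1: pointwise bounds on $X(\nu)$ near the corner.} Write $\nu=-\partial_{r_n}$ (suitably normalized) and expand $X$ from \eqref{defX}. For the gradient terms $\nabla(\alpha-\alpha_o)$ and $\nabla Z$, the corner asymptotics \eqref{cccc} and Section \ref{sec7:corner} give $\alpha-\alpha_o$ and $Z$ as smooth functions of $\theta$ plus $O(r_n^{2})$ corrections. The reason is that $g$ and $g_o$ share the same model corner structure $g_{\mathrm{corner}}$ up to the constants $c_1,c_2$, and these only contribute bounded functions of $\theta$. Consequently the radial derivatives are $O(r_n^{-1})$ at worst, or better. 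The term $(2\alpha-2\alpha_o-Z)\nabla\log\rho$ has size $|\partial_{r_n}\log\rho|\sim r_n^{-1}$ multiplied by a bounded quantity. For the terms $(V-V_o)\nu(V_o)$ and $(W-W_o)\nu(W_o)$, I will use the bound $\mathbf d=O(|\log\sin2\theta|)$ established in the proof of Theorem \ref{thm.convexity}, together with $|\partial_{r_n}V_o|,|\partial_{r_n}W_o|=O(r_n^{-1}|\log\sin 2\theta|)$ from Section \ref{sec7:corner}; these hold because the model harmonic map near a nut is homogeneous of degree zero in $r_n$ up to a logarithmic term.

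\textbf{Step 2: integrate.} Combining the bounds gives $|X(\nu)|\le C r_n^{-1}(1+|\log\sin2\theta|^2)$. Multiplying by the area element, which is of order $r_n^{2}$ times a density integrable against $\log^2\sin2\theta$, yields
\[
\Big|\int_{\partial B_{\varsigma_2}(z_n)\cap\partial\Omega_{\boldsymbol\varsigma}}X(\nu)\,dA\Big|\le C\varsigma_2\int_0^{\pi/2}(1+\log^2\sin2\theta)\sin 2\theta\,d\theta\le C\varsigma_2 ,
\]
uniformly in $\varsigma_1$. This tends to zero as $\varsigma_2\to0$, and summing over $n=1,\dots,N$ completes the argument.

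\textbf{Main obstacle.} The delicate point is the term $(2\alpha-2\alpha_o-Z)\nabla\log\rho$ and the radial derivative of $\alpha-\alpha_o$ near the two axis directions $\theta\to0,\pi/2$. There each quantity individually behaves like a logarithm in $\rho$, matching the axis expansions used in Lemma \ref{lem4.1}. I would handle this by comparing with the axis expansions: the leading combination is $-2\pmb\vartheta^n+2\pmb\vartheta^n_o$ plus the bounded quantities $V-V_o$ or $W-W_o$. Since $g$ and $g_o$ share the rod data, and the corner metric forces consistent $\theta$-dependence, this combination stays bounded up to the axes. If some term is only $O(r_n^{-1})$ times a $\theta$-function with a nonintegrable singularity, I would instead absorb it using the $\sin2\theta$ weight in the area element coming from the Jacobian of \eqref{cornerrho}.
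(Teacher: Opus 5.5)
Your proposal is correct and is essentially the paper's argument: you bound $X(\nu)$ pointwise from the corner asymptotics of Section~\ref{sec7:corner}, and the area element of $\partial B_{\varsigma_2}(z_n)$ then supplies a positive power of $\varsigma_2$; the paper in fact has $V-V_o,\,W-W_o=O(1)$ and $\partial_{r_n}V_o,\,\partial_{r_n}W_o=O(r_n)$, so your logarithmic bounds via $\mathbf{d}$ are more than enough. Tidy the bookkeeping: in the coordinates \eqref{cornerrho} one has $\nu=-r_n^{-1}\partial_{r_n}$, $X(\nu)=O(r_n^{-2})$ and $dA=\tfrac12 r_n^4\sin2\theta\,d\theta\,d\varphi$ (not $r_n^3$ or $r_n^2$); your ``main obstacle'' does not arise, since $\alpha-\alpha_o=O_1(r_n^2)$ and $Z=\log|c_1c_2|+O_1(r_n^2)$ uniformly near the nut; and for non-canonical rod structures at the corner you should invoke the $SL(2,\mathbb{Z})$ reduction, as the paper does.
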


\begin{proof}
Consider a corner point at the intersection of two rods $\Gamma_n \cap \Gamma_{n+1}$. Let us assume first that the rod structures on either side are $\mathbf{v}_{n}=(1,0)$ and $\mathbf{v}_{n+1}=(0,1)$.
Applying the asymptotics of Section \ref{sec7:corner} yields
\begin{equation}\label{aoijfnoainofi}
\begin{split}
r_nX(\nu)&=\underbrace{2\partial_{r_n}(\alpha-\alpha_o+Z)}_{O(r_n)}-\underbrace{(2\alpha-2\alpha_o-Z)}_{O(1)}\underbrace{\partial_{r_n}\log\rho}_{O(r_n^{-1})}\\
&\qquad+\underbrace{\left(V-V_o\right)}_{O(1)}\underbrace{\partial_{r_n} V_o}_{O(r_n)}+\underbrace{\left(W-W_o\right)}_{O(1)}\underbrace{\partial_{r_n} W_o}_{O(r_n)},
\end{split}
\end{equation}
where we have used that $\nu=-\frac{1}{r_n}\partial_{r_n}$. Since the area element induced from $\mathbb{R}^3$ on
$\partial \mathcal{B}_{\varsigma_2}(z_n)$ is given in the polar coordinates of \eqref{cornerrho} by
$\tfrac{1}{2}r_n^4\sin 2\theta d\theta d\phi$, the desired result follows. In the general case, when the rod structures are not in canonical form, an $SL(2,\mathbb{Z})$ congruence transformation may be applied to $\Phi$ and $\Phi_o$ giving a reduction back to the canonical rod structures. In this process $\alpha$, $\alpha_o$, and $Z$ remain unchanged, while the asymptotics of the remaining terms in \eqref{aoijfnoainofi} are the same. Thus, in the general case, the desired outcome is achieved.
\end{proof}

In order to treat the boundary integral at infinity, we will first provide three preliminary propositions to compute
the mass \eqref{mass.def} for metrics of the form \eqref{m1}.  

\begin{prop}\label{?}
Let $(M,g)$ be as in Theorem \ref{main.theorem}, and let $b$ denote the corresponding asymptotic model metric
as given in Section \ref{modelgeom}. If both metrics $g$ and $b$ are expressed in radial Brill coordinates, then the mass density takes the form
\begin{equation}\label{massdensity}
\begin{split}
\left(\mathrm{div}_b \mathbf{e}-d\mathrm{Tr}_b \mathbf{e}\right)(\partial_r)&= \frac{1}{2} b^{rr}b^{\theta\theta}g_{rr}\partial_rb_{\theta\theta}+\frac{1}{2}(b^{\theta\theta})^2 g_{\theta\theta}\partial_rb_{\theta\theta}-b^{\theta\theta}\partial_rg_{\theta\theta}\\
&\quad +b^{rr}g_{rr}\partial_r\log\rho -\mathrm{Tr}\left(G_b^{-1}\partial_rG\right)-\frac{1}{2}\mathrm{Tr}\left(G\partial_rG_b^{-1}\right)\\
&\quad +b^{\theta\theta}\partial_\theta g_{\theta r}+b^{\theta\theta}g_{\theta r}\partial_\theta\log\rho ,
\end{split}
\end{equation}
where $\mathbf{e}=g-b$ is the error tensor.
\end{prop}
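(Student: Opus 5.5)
We compute directly in radial Brill coordinates $(r,\theta,\phi^1,\phi^2)$. In these coordinates the model metric $b$ from Section \ref{modelgeom} is block diagonal: $b=b_{rr}dr^2+b_{\theta\theta}d\theta^2+(G_b)_{ij}d\phi^i d\phi^j$. Its coefficients depend only on $(r,\theta)$, and there are no $dr\,d\theta$ cross terms. The metric $g$ from \eqref{m1}, after the change of variables \eqref{alerho}/\eqref{alfrho}, has an orbit-space block $g_{rr},g_{r\theta},g_{\theta\theta}$, a fiber block $G$, and mixed terms $G_{ij}A^j_a$. The first step is to record the volume form $d\mathcal{V}$ and the Christoffel symbols of $b$. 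We will use that $\sqrt{\det b}$ factors as $\sqrt{b_{rr}b_{\theta\theta}\det G_b}$. Since $b$ is itself of Brill form, its orbit-space part is conformal to $d\rho^2+dz^2$ and $\det G_b=\rho^2$ exactly. This is where the $\log\rho$ terms will come from.

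We then expand $\mathrm{div}_b\mathbf{e}(\partial_r)=b^{ab}(\mathring\nabla_a\mathbf{e})_{br}$ and $\partial_r\mathrm{Tr}_b\mathbf{e}$. The divergence becomes $\frac{1}{\sqrt{\det b}}\partial_a(\sqrt{\det b}\,b^{ab}\mathbf{e}_{br})$ minus the Christoffel correction $b^{ab}\mathring\Gamma^c_{ar}\mathbf{e}_{bc}$. Torus invariance kills all $\partial_{\phi^i}$ derivatives. The $(r,r)$ and $(\theta,r)$ components produce the terms $b^{rr}g_{rr}\partial_r\log\rho$, $b^{\theta\theta}\partial_\theta g_{\theta r}$, and $b^{\theta\theta}g_{\theta r}\partial_\theta\log\rho$, using $\partial\log\sqrt{\det b}=\partial\log\rho+\tfrac12\partial\log(b_{rr}b_{\theta\theta})$. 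The Christoffel corrections $\mathring\Gamma^\theta_{\theta r}=\tfrac12 b^{\theta\theta}\partial_r b_{\theta\theta}$ and $\mathring\Gamma^k_{ir}=\tfrac12 G_b^{kl}\partial_r (G_b)_{li}$ give the $g_{\theta\theta}\partial_r b_{\theta\theta}$ term and a fiber term $-\tfrac12\mathrm{Tr}(G_b^{-1}G\,G_b^{-1}\partial_r G_b)=\tfrac12\mathrm{Tr}(G\partial_r G_b^{-1})$.

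Next we subtract $\partial_r\mathrm{Tr}_b\mathbf{e}=\partial_r\big(b^{rr}g_{rr}+b^{\theta\theta}g_{\theta\theta}+\mathrm{Tr}(G_b^{-1}G)\big)$. The pieces with $\mathbf{e}=g-b$ where $b$ appears alone combine to constants or cancel. This is because $\mathrm{Tr}_b b=4$, and for $b$ itself the divergence expression vanishes identically. It is cleanest to compute for $g$ and then note that the same formula applied to $g=b$ gives zero, so only $g$-coefficients survive. The $\partial_r$ hitting $b^{rr}$ combines with the radial divergence terms. If the radial coordinate is normalized with $b_{rr}=1$, as in all models, the $\partial_r b^{rr}$ contributions drop out, which leaves $\tfrac12 b^{rr}b^{\theta\theta}g_{rr}\partial_r b_{\theta\theta}$ from $\partial_r\log\sqrt{b_{\theta\theta}}$. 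The fiber trace derivative produces $-\mathrm{Tr}(G_b^{-1}\partial_r G)-\mathrm{Tr}(G\partial_r G_b^{-1})$. Added to the $+\tfrac12$ correction, this yields the stated $-\tfrac12\mathrm{Tr}(G\partial_rG_b^{-1})$. The mixed $A$-terms $g_{r\phi^i}$ contribute only through $\partial_{\phi}$ derivatives, which vanish, or through Christoffels $\mathring\Gamma^{\phi}_{\phi r}$ paired with $\mathbf{e}_{\phi r}$. We must check that the latter cancel; we expect they do because in \eqref{m1} the $g_{\phi r}$ terms enter $\mathrm{div}$ only via $b^{\phi\phi}\mathbf{e}_{\phi r}$ contracted against $\mathring\Gamma^r_{\phi\phi}$, and this piece is absent since it multiplies $\mathbf{e}_{rr}$-type combinations. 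We will verify this carefully.

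The main obstacle is bookkeeping. We must track which of the $b$-only terms cancel against one another, confirm that the orbit-space part of $b$ in radial coordinates has no $r\theta$ cross term and $b_{rr}=1$ (or otherwise absorb the $\partial_r b_{rr}$ terms), and handle the off-diagonal $g_{r\phi}$ contributions. We will organize the computation by first deriving the general formula for a torus-invariant $b$ with block structure and then specializing.
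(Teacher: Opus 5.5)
Your proposal is correct and follows the paper's route: write $b$ block-diagonally in radial Brill coordinates, replace $\mathbf{e}$ by $g$ (since $\mathring\nabla b=0$ and $\mathrm{Tr}_b b=4$), contract against the Christoffel symbols of $b$ using $\det G_b=\rho^2$, and subtract the trace term; the only cosmetic difference is that you absorb $\mathring\Gamma^A_{Ar}$ into $\partial_r\log\sqrt{\det b}$ instead of listing each symbol. One correction to your heuristic: the mixed components $g_{ir}=G_{ij}A^j_r$ do not drop out through $\mathring\Gamma^r_{ij}$, which multiplies $g_{rr}$ and produces the $b^{rr}g_{rr}\partial_r\log\rho$ term; they drop out because $b^{ai}=0$ and $\mathring\Gamma^c_{ir}=\mathring\Gamma^k_{ar}=0$ for the $\phi$-independent block-diagonal $b$, so $g_{ir}$ appears only inside $\partial_{\phi^i}(\cdot)=0$.
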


\begin{proof}
Brill coordinates may be placed in radial form with the transformations \eqref{alerho} and \eqref{alfrho}, depending on the asymptotic structure. In these coordinates, the model metrics from Section \ref{modelgeom} may be written as
\begin{equation}
b=b_{rr}dr^2+b_{\theta\theta}d\theta^2+G_{bij}d\phi^id\phi^j,
\end{equation}
where $b_{\theta\theta}=r^2 b_{rr}$ and $\det G_b =\rho^2$.
The components of $g$ and its inverse in these coordinates are given by
\begin{equation}
\begin{split}
g_{ac}=e^{2\alpha}\delta_{2ac}+G_{ij}A^i_aA^i_c,\quad\quad g_{ij}=G_{ij}, \quad\quad g_{ia}=G_{ij}A^j_a,
\end{split}
\end{equation}
\begin{equation}
\begin{split}\label{asympt}
g^{ab}=e^{-2\alpha}\delta_2^{ac},\quad\quad g^{ij}=G^{ij}+e^{-2\alpha}\delta_2^{ac}A^i_aA^j_c,\quad\quad g^{ia}=-e^{-2\alpha}\delta_2^{ac}A^i_c
\end{split}
\end{equation}
where here and below $a,c,d=r,\theta$ and $i,j,k,l=1,2$ index the torus fiber, while
the 2-dimensional flat metric is 
\begin{equation}
\delta_2=e^{2\alpha}\frac{r^2}{p^2}\left(dr^2+r^2 d\theta^2\right)\quad \text{ALE},\quad\quad
\delta_2=e^{2\alpha}\ell^2\left(dr^2+r^2 d\theta^2\right)\quad \text{ALF and AF$_{\beta\ell}$}.
\end{equation}

We begin by computing the Christoffel symbols associated with the metric $b$:
\begin{equation}
\Gamma_{rr}^r=\frac{1}{2}b^{rr}\partial_rb_{rr},\qquad \Gamma^r_{\theta r}=\frac{1}{2}b^{rr}\partial_\theta b_{rr},\qquad \Gamma^r_{\theta\theta}=-\frac{1}{2}b^{rr}\partial_r b_{\theta\theta},\qquad \Gamma^r_{ij}=-\frac{1}{2}b^{rr}\partial_rG_{bij},
\end{equation}
\begin{equation}
\Gamma^\theta_{rr}=-\frac{1}{2}b^{\theta\theta}\partial_\theta b_{rr},\qquad \Gamma^{\theta}_{\theta\theta}=\frac{1}{2}b^{\theta\theta}\partial_\theta b_{\theta\theta},\qquad \Gamma^{\theta}_{\theta r}=\frac{1}{2}b^{\theta\theta}\partial_rb_{\theta\theta},\qquad \Gamma^{\theta}_{ij}=-\frac{1}{2}b^{\theta\theta}\partial_\theta G_{b ij},
\end{equation}
\begin{equation}
\Gamma^{r}_{ri}=\Gamma^{r}_{\theta i}=\Gamma^{\theta}_{\theta i}=\Gamma^{\theta}_{r i}=\Gamma^k_{rr}=\Gamma^{k}_{\theta\theta}=\Gamma^k_{r\theta}=\Gamma^k_{ij}=0,
\end{equation}
\begin{equation}
\Gamma^i_{rj}=\frac{1}{2}G_b^{ik}\partial_rG_{bkj},\qquad \Gamma^i_{\theta j}=\frac{1}{2}G_b^{ik}\partial_\theta G_{bkj}.
\end{equation}
Then the divergence term of the mass density becomes
\begin{align}\label{diver}
\begin{split}
\left(\text{div}_b \mathbf{e}\right)(\partial_r)&=b^{ac}\left(\partial_a g_{cr}-\Gamma^d_{ac} g_{dr}-\Gamma^d_{ar}g_{cd}\right)+b^{ij}\left(-\Gamma^c_{ij} g_{cr}-\Gamma^c_{ir}g_{jc}-\Gamma^k_{ir}g_{jk}\right)\\
&=b^{rr}\partial_rg_{rr}+b^{\theta\theta}\partial_\theta g_{\theta r}-\frac{b^{rr}}{2}\left(b^{rr}\partial_rb_{rr}-b^{\theta\theta}\partial_rb_{\theta\theta}-\mathrm{Tr}\left(G_b^{-1}\partial_rG_b\right)\right)g_{rr}\\
&\quad+\frac{1}{2}b^{\theta\theta}\mathrm{Tr}\left(G_b^{-1}\partial_\theta G_b\right) g_{\theta r}-\frac{1}{2}\left(b^{rr}\right)^2 g_{rr}\partial_rb_{rr}-\frac{1}{2}(b^{\theta\theta})^2g_{\theta\theta}\partial_rb_{\theta\theta}\\
&\quad+\frac{1}{2}\mathrm{Tr}\left(G\partial_rG_b^{-1}\right).
\end{split}
\end{align}
Similarly, for the trace term we have
\begin{align}\label{dtraceer}
\begin{split}
(d\mathrm{Tr}_b \mathbf{e})(\partial_r)&=b^{ac}\left(\partial_rg_{ac}-2\Gamma^d_{ra}g_{cd}-2\Gamma^k_{ra}g_{ck}\right)+b^{ij}\left(\partial_rg_{ij}-2\Gamma^d_{ri}g_{jd}-2\Gamma^k_{ri}g_{jk}\right)\\
&=b^{ac}\left(\partial_rg_{ac}-2\Gamma^d_{ra}g_{cd}\right)+b^{ij}\left(\partial_rg_{ij}-2\Gamma^k_{ri}g_{jk}\right)\\
&= b^{rr}\partial_rg_{rr}+b^{\theta\theta}\partial_rg_{\theta\theta}+\mathrm{Tr}\left(G^{-1}_b\partial_r G\right)
-(b^{rr})^2 g_{rr}\partial_rb_{rr}\\
&\quad - (b^{\theta\theta})^2g_{\theta\theta} \partial_r b_{\theta\theta}+\mathrm{Tr}\left(G\partial_rG_b^{-1} \right).
\end{split}
\end{align}
Subtracting this from \eqref{diver} yields the desired result.
\end{proof}

\begin{prop}
Assume the hypotheses and setting of Proposition \ref{?}. Write the torus fiber parts of the metrics $g$ and $b$ as $G=\rho e^Z\Phi$ and $G_b=\rho\Phi_b$, where $\Phi$ and $\Phi_b$ are unimodular. Let $(V,W)$ and $(V_b,W_b)$ be the parameterizations of $\Phi$ and $\Phi_b$ given by \eqref{Def:VW}, then
\begin{equation}\label{GbG1}
\begin{split}
\mathrm{Tr}\left(G_b^{-1}\partial_rG\right)&=e^Z\left(\partial_r\log\rho+\partial_r Z\right) \left(2\cosh (V-V_b)\cosh W_b\cosh W-2\sinh W_b\sinh W\right)\\
&\quad +e^Z\left(2\sinh (V-V_b)\cosh W_b\cosh W\partial_rV-2\cosh W\sinh W_b \partial_rW\right.\\&
\left. \quad+2\cosh (V-V_b)\sinh W\cosh W_b\partial_rW\right),
\end{split}
\end{equation} 
and 
\begin{equation}\label{GbG2}
\begin{split}
\mathrm{Tr}\left(G\partial_rG_b^{-1}\right)&=-e^Z(\partial_r\log\rho) \left(2\cosh (V-V_b)\cosh W_b\cosh W-2\sinh W_b\sinh W\right)\\
&\quad +e^Z\left(2\sinh (V_b-V)\cosh W_b\cosh W\partial_rV_b-2\cosh W_b\sinh W \partial_rW_b\right.\\&
\left. \quad+2\cosh (V-V_b)\sinh W_b\cosh W\partial_rW_b\right).
\end{split}
\end{equation}
\end{prop}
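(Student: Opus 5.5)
The plan is a direct algebraic computation. It is organized around a single scalar function, the pairing $\mathcal{T}(\Phi,\Phi_b):=\mathrm{Tr}(\Phi_b^{-1}\Phi)$. Both identities \eqref{GbG1} and \eqref{GbG2} are partial derivatives of $\mathcal{T}$, one with respect to the entries of $\Phi$ and one with respect to the entries of $\Phi_b$, plus the contributions of the scalar factors $\rho e^{Z}$ and $\rho^{-1}$.

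First I reduce the AF$_{\beta\ell}$ parameterization in \eqref{Def:VW} to the standard one. Set $S=\begin{pmatrix}1&0\\ -\beta\ell&1\end{pmatrix}$; in the AF$_{\beta\ell}$ case the matrix $S\Phi S^{T}$ is unimodular with entries $e^V\cosh W$, $\sinh W$, $e^{-V}\cosh W$, and similarly for $\Phi_b$. Since $S$ is constant, replacing $\Phi,\Phi_b$ by $S\Phi S^T, S\Phi_bS^T$ leaves $\mathrm{Tr}(\Phi_b^{-1}\Phi)$, $\mathrm{Tr}(\Phi_b^{-1}\partial_r\Phi)$ and $\mathrm{Tr}(\Phi\,\partial_r\Phi_b^{-1})$ unchanged. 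The trace is invariant under the conjugation $\Phi_b^{-1}\Phi\mapsto S^{-T}\Phi_b^{-1}\Phi S^{T}$. So it suffices to treat $\Phi_{11}=e^V\cosh W$, $\Phi_{12}=\sinh W$, $\Phi_{22}=e^{-V}\cosh W$, and likewise for $\Phi_b$.

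Next, since $\det\Phi_b=1$, the inverse is the adjugate, $\Phi_b^{-1}=\begin{pmatrix}\Phi_{b22}&-\Phi_{b12}\\ -\Phi_{b12}&\Phi_{b11}\end{pmatrix}$, which is linear in $\Phi_b$. Hence
\begin{equation*}
\mathcal{T}=\Phi_{b22}\Phi_{11}+\Phi_{b11}\Phi_{22}-2\Phi_{b12}\Phi_{12}=2\cosh(V-V_b)\cosh W\cosh W_b-2\sinh W\sinh W_b .
\end{equation*}
The function $\mathcal{T}$ is bilinear in the entries of $(\Phi,\Phi_b)$. Therefore $\mathrm{Tr}(\Phi_b^{-1}\partial_r\Phi)=\partial_V\mathcal{T}\,\partial_rV+\partial_W\mathcal{T}\,\partial_rW$. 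Likewise, because the adjugate is linear, $\mathrm{Tr}(\Phi\,\partial_r\Phi_b^{-1})=\partial_{V_b}\mathcal{T}\,\partial_rV_b+\partial_{W_b}\mathcal{T}\,\partial_rW_b$.

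For \eqref{GbG1}, I write $G_b^{-1}\partial_rG=\rho^{-1}\Phi_b^{-1}\,\partial_r(\rho e^Z\Phi)=e^Z\big[(\partial_r\log\rho+\partial_rZ)\Phi_b^{-1}\Phi+\Phi_b^{-1}\partial_r\Phi\big]$ and take traces. Differentiating $\mathcal{T}$ in $V$ and $W$ gives exactly the three remaining terms. For \eqref{GbG2}, I write $\partial_rG_b^{-1}=\rho^{-1}\big(-\partial_r\log\rho\,\Phi_b^{-1}+\partial_r\Phi_b^{-1}\big)$ and multiply by $G=\rho e^Z\Phi$, which gives $e^Z\big[-\partial_r\log\rho\,\mathcal{T}+\mathrm{Tr}(\Phi\,\partial_r\Phi_b^{-1})\big]$. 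Differentiating $\mathcal{T}$ in $V_b$ and $W_b$ yields the stated expression; the factor $\sinh(V_b-V)$ comes from $\partial_{V_b}\cosh(V-V_b)$.

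There is no real obstacle. The only point requiring care is the AF$_{\beta\ell}$ reduction: one must check that the shear $S$ used to define $(V,W)$ and $(V_b,W_b)$ is the same constant matrix for $g$ and $b$, so that the conjugation invariance of the trace applies. Beyond that, the work is bookkeeping of signs in the derivatives of $\mathcal{T}$.
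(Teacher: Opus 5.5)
Your proposal is correct and follows essentially the same route as the paper. You factor out the scalar parts $\rho e^Z$ and $\rho^{-1}$ to reduce to $\mathrm{Tr}(\Phi_b^{-1}\Phi)$, $\mathrm{Tr}(\Phi_b^{-1}\partial_r\Phi)$ and $\mathrm{Tr}(\Phi\,\partial_r\Phi_b^{-1})$, and you handle the AF$_{\beta\ell}$ case by a constant shear; your $S$ is exactly the paper's $P_c^{-T}$, so $S\Phi S^T=H(V,W)$. The only difference is cosmetic: you obtain the two derivative traces as partial derivatives of the single pairing $\mathcal{T}=2\cosh(V-V_b)\cosh W\cosh W_b-2\sinh W\sinh W_b$, which makes transparent the sign bookkeeping that the paper leaves to direct computation.
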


\begin{proof}
We will first consider the $\beta =0$ case, in which
\begin{equation}
\Phi=\begin{pmatrix}
        e^{V}\cosh W & \sinh W\\
        \sinh W & e^{-V}\cosh W
    \end{pmatrix},\qquad \Phi_b=\begin{pmatrix}
        e^{V_b}\cosh W_b & \sinh W_b\\
        \sinh W_b & e^{-V_b}\cosh W_b
    \end{pmatrix}    .
\end{equation} 
A computation shows that
\begin{align}
\begin{split}
\mathrm{Tr}\left(G_b^{-1}\partial_rG\right) &=e^Z\left(\partial_r\log\rho+\partial_rZ\right)\mathrm{Tr}\left(\Phi_b^{-1}\Phi\right)+e^Z\mathrm{Tr}\left(\Phi_b^{-1}\partial_r\Phi\right),\\
\mathrm{Tr}\left(G\partial_rG_b^{-1}\right) &=-e^Z\mathrm{Tr}\left(\Phi\Phi_b^{-1}\right)\partial_r\log\rho +e^Z\mathrm{Tr}\left(\Phi\partial_r\Phi_b^{-1}\right).
\end{split}
\end{align}
The desired result then follows from
\begin{equation}\label{GbG3}
\mathrm{Tr}\left(\Phi \Phi_b^{-1}\right)=\mathrm{Tr}\left(\Phi_b^{-1}\Phi\right)
=2\cosh (V-V_b)\cosh W_b\cosh W-2\sinh W\sinh W_b,
\end{equation}
\begin{align}\label{GbG4}
\begin{split}
\mathrm{Tr}\left(\Phi_b^{-1}\partial_r\Phi\right)
&=2\sinh (V-V_b)\cosh W_b\cosh W\partial_rV+2\cosh (V-V_b)\cosh W_b\sinh W\partial_rW\\
&\quad-2\cosh W\sinh W_b\partial_rW,
\end{split}
\end{align}
and
\begin{align}\label{GbG5}
\begin{split}
\mathrm{Tr}\left(\Phi\partial_r\Phi_b^{-1}\right)
&=-2\sinh(V-V_b)\cosh W\cosh W_b\partial_rV_b+2\cosh (V-V_b)\cosh W\sinh W_b\partial_rW_b\\
&\quad-2\cosh W_b\sinh W\partial_rW_b.
\end{split}
\end{align}

In the case $\beta\neq 0$, set \(c=\beta\ell\) and define
\begin{equation}
        P_c=
        \begin{pmatrix}
        1 & c\\
        0 & 1
        \end{pmatrix}.
\end{equation}
Then
\begin{equation}
        \Phi=P_c^T H(V,W)P_c,
        \qquad
        \Phi_b=P_c^T H(V_b,W_b)P_c,
\end{equation}
where
\begin{equation}
        H(V,W)=
        \begin{pmatrix}
        e^V\cosh W & \sinh W\\
        \sinh W & e^{-V}\cosh W
        \end{pmatrix}.
\end{equation}
Since \(P_c\) is constant
\begin{equation}
        \operatorname{Tr}(\Phi_b^{-1}\partial_r\Phi)
        =
        \operatorname{Tr}(H_b^{-1}\partial_rH),
\quad\quad\quad
        \operatorname{Tr}(\Phi\,\partial_r\Phi_b^{-1})
        =
        \operatorname{Tr}(H\,\partial_rH_b^{-1}).
\end{equation}
Therefore, the computation is identical to the \(\beta=0\) case. 
\end{proof}

\begin{prop}\label{massinfinity}
Assume the hypotheses and setting of Proposition \ref{?}. In radial Brill coordinates the mass takes the following
form in the ALE case
\begin{equation}\label{massAEALE}
\mathrm{mass}_b(M,g)=\lim_{r\to\infty}\frac{1}{4\pi}\int_{\mathcal{S}_r}\left(-2\partial_r\left(\alpha-\alpha_b+Z\right)+\left(2\alpha-2\alpha_b-Z\right)\partial_r\log\rho\right)d\mathcal{V}
\end{equation}
where $d\mathcal{V}=\frac{r^3}{2p}\sin2\theta d\theta d\phi^1 d\phi^2$, whereas in the ALF and AF$_{\beta\ell}$ cases it takes the form
\begin{equation}\label{massAFALF}
\mathrm{mass}_b(M,g)\!=\!\lim_{r\to\infty}\frac{1}{4\pi}\!\int_{\mathcal{S}_r}\!\!\left(-2\partial_r\left(\alpha-\alpha_b+\!Z\right)+\left(2\alpha-2\alpha_b-Z\right)\partial_r\log\rho-(V-V_b)\partial_r V_b\right)d\mathcal{V}
\end{equation}
where $d\mathcal{V}=\ell r^2\sin\theta d\theta d\phi^1 d\phi^2$. 
\end{prop}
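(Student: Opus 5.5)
We start from the mass density formula \eqref{massdensity} of Proposition \ref{?}, integrate it over $\mathcal{S}_r$ with the model volume form $d\mathcal{V}$, and identify the surviving terms as $r\to\infty$. The work is to put each term of \eqref{massdensity} into the language of the Brill coefficients $\alpha$, $Z$, $(V,W)$ and their model counterparts $\alpha_b$, $(V_b,W_b)$ (with $Z_b=0$ since $\det G_b=\rho^2$). Then we use the decay \eqref{decay12} and the asymptotics of Section \ref{sec7:infinity} to discard everything that is $o(r^{-3})$ (ALE) or $o(r^{-2})$ (ALF/AF$_{\beta\ell}$) after multiplying by the area growth.

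\textbf{First group: the $r,\theta$ components.} In radial Brill coordinates $g_{rr}=e^{2\alpha}\lambda+O(|A|^2)$ and $g_{\theta\theta}=e^{2\alpha}\lambda r^2+O(|A|^2)$, where $\lambda$ is the conformal factor relating $d\rho^2+dz^2$ to $dr^2+r^2d\theta^2$ via \eqref{alerho} or \eqref{alfrho}. Likewise $b_{rr}=e^{2\alpha_b}\lambda$ and $b_{\theta\theta}=r^2b_{rr}$. Substituting, the first three terms of \eqref{massdensity} combine to $-2\partial_r(\alpha-\alpha_b)$ plus terms quadratic in $\alpha-\alpha_b$ or $A$. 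The term $b^{rr}g_{rr}\partial_r\log\rho=e^{2(\alpha-\alpha_b)}\partial_r\log\rho$ is kept exactly for now. The cross terms $g_{\theta r}=G_{ij}A^i_rA^j_\theta$ are quadratic in $A$. Their $\theta$-derivative integrates against $\sin 2\theta$ or $\sin\theta$, so after integrating by parts in $\theta$ they are of the same quadratic order and vanish in the limit by the $F$/$A$ decay used in Proposition \ref{Prop:finiteness}.

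\textbf{Second group: the fiber traces.} Insert \eqref{GbG1} and \eqref{GbG2}. Write $\mathrm{Tr}(\Phi_b^{-1}\Phi)=2+O(\mathbf d^2)$, where $\mathbf d$ is the hyperbolic distance between $(V,W)$ and $(V_b,W_b)$, and expand $e^Z=1+Z+O(Z^2)$. The $\partial_r\log\rho$ pieces then give
\[
-(\partial_r\log\rho)\bigl(2e^{Z}-e^{Z}\bigr)\approx-(1+Z)\partial_r\log\rho ,
\]
and the $\partial_rZ$ piece gives $-2\partial_r Z$. Combined with $e^{2(\alpha-\alpha_b)}\partial_r\log\rho\approx(1+2\alpha-2\alpha_b)\partial_r\log\rho$, the zeroth-order $\partial_r\log\rho$ terms cancel. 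What remains is $(2\alpha-2\alpha_b-Z)\partial_r\log\rho-2\partial_r Z$, which together with $-2\partial_r(\alpha-\alpha_b)$ yields the common integrand of \eqref{massAEALE} and \eqref{massAFALF}.

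\textbf{Third group: the map terms, linear in $V-V_b$, $W-W_b$.} From \eqref{GbG1}, \eqref{GbG2} these are, to first order,
\[
-2\cosh^2W_b\,\partial_r(V-V_b)+\ldots-(V-V_b)\cosh^2W_b\,\partial_rV_b-(W-W_b)\,\partial_r W_b,
\]
with the precise coefficients to be checked. In the ALE case the model is $W_b=0$ with $\partial_rV_b=O(r^{-1})$, or better. Here the main obstacle is showing these terms vanish: $\partial_r(V-V_b)$ is a total $r$-derivative of an $O(r^{-2-\kappa})$ quantity, so it contributes nothing; and $(V-V_b)\partial_rV_b=O(r^{-3-\kappa})$ is also negligible, since $V_b$ depends only on $\theta$ up to $\log r$ corrections that cancel in $\partial_r\log\rho$. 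In the ALF/AF case $\mathbf d=O(r^{-1/2-\kappa})$, which is not enough to kill $(V-V_b)\partial_rV_b$ against $r^2$ area growth, because $V_b$ contains $\log r$-type growth with $\partial_rV_b\sim r^{-1}$; this is why that term survives in \eqref{massAFALF}. The $W$-terms should drop because $\partial_rW_b$ decays faster (for $\beta\neq0$ one conjugates by $P_c$ as in the preceding proposition). The step needing the most care is controlling the total-derivative term $\partial_r(V-V_b)$ and the quadratic remainders uniformly in $\theta$ near the axes $\theta=0,\pi$, where $V_b$ blows up. I would handle it using the axis asymptotics of Section \ref{sec7.axis} near the poles of $\mathcal{S}_r$, after splitting $\mathcal S_r$ into a polar cap and its complement.
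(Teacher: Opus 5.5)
Your overall route is the paper's: substitute \eqref{GbG1}--\eqref{GbG2} into \eqref{massdensity} and Taylor expand using the decay. Your first two groups are correct and reproduce the paper's treatment of $I_0$, $I_1$ and the $g_{\theta r}$ terms. Your remark that the $g_{\theta r}$ terms are an exact $\theta$-derivative of $\rho\,g_{\theta r}$ (since $d\mathcal V=r\rho\,d\theta\,d\phi^1d\phi^2$ and $b^{\theta\theta}=r^{-2}$) is even cleaner than the paper's estimate.

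The gap is in the third group, which is exactly where \eqref{massAEALE} and \eqref{massAFALF} differ. Your tentative linearization contains a term $-2\cosh^2W_b\,\partial_r(V-V_b)$ that is not actually there. Your reasons for discarding it also do not work. A radial derivative integrated over $\mathcal S_r$ is not a boundary term on $\mathcal S_r$. In the ALE case $V-V_b=\rho^2O_1(r^{-5-\kappa})$ is only $O(r^{-1-\kappa})$, so such a term would contribute $O(r^{1-\kappa})$. In the ALF/AF$_{\beta\ell}$ case you do not address it at all, although it would have the same order $r^{-3/2-\kappa}$ as the term $(V-V_b)\partial_rV_b$ you keep. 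As written, your derivation would produce an extra term in \eqref{massAFALF}.

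The missing observation is one you already used in the second group, namely $\mathrm{Tr}(\Phi_b^{-1}\Phi)=2\cosh\mathbf d$. Differentiating it in $r$ and using $\mathrm{Tr}(\Phi_b\,\partial_r\Phi_b^{-1})=0$ gives
\[
-\mathrm{Tr}\left(\Phi_b^{-1}\partial_r\Phi\right)-\tfrac12\mathrm{Tr}\left(\Phi\,\partial_r\Phi_b^{-1}\right)=-\partial_r\left(2\cosh\mathbf d\right)+\tfrac12\mathrm{Tr}\left((\Phi-\Phi_b)\,\partial_r\Phi_b^{-1}\right).
\]
So no derivative falls on $\Phi-\Phi_b$ at first order, and by \eqref{GbG5} the linear part is $-(V-V_b)\cosh^2W_b\,\partial_rV_b-(W-W_b)\,\partial_rW_b$; everything else is quadratic.

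The three cases then go as follows.
\begin{itemize}
\item \textbf{ALE.} This vanishes identically, because the cone is scale invariant ($G_b$ and $\rho$ are both $r^2$ times functions of $\theta$), so $\partial_r\Phi_b=0$. Your assertion that $W_b=0$ there is false for $q\neq0$ (one has $\sinh W_b=q\cot\theta$), but it becomes irrelevant.
\item \textbf{AF$_{\beta\ell}$.} With the adapted definition \eqref{Def:VW} one has $W_b=0$, which gives precisely $-(V-V_b)\partial_rV_b$.
\item \textbf{ALF-$A_{k-1}$.} Here $\sinh W_b=\ell^2k\cos^2(\theta/2)/\rho$ is not small near $\theta=0$, and $\partial_rW_b$ is only $O(r^{-1})$ there. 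Discarding the $\sinh^2W_b$ correction and $(W-W_b)\partial_rW_b$ therefore requires the $\rho^2$ factors in $V-V_b$ and $W-W_b$ from Section \ref{sec7:infinity}, not merely ``faster decay of $\partial_rW_b$''. This is what the paper's estimates of $I_2$ and $I_4-I_3$ encode.
\end{itemize}
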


\begin{proof}
Consider first the following term from \eqref{GbG1}, broken into smaller subexpressions
\begin{equation}\label{GbG1A}
\begin{split}
\mathrm{Tr}\left(G_b^{-1}\partial_rG\right)&=\underbrace{e^Z\left(\partial_r\log\rho+\partial_r Z\right)}_{I_0}\underbrace{\left(2\cosh (V-V_b)\cosh W_b\cosh W-2\sinh W_b\sinh W\right)}_{I_1}\\
&\quad +e^Z\left(\underbrace{2\sinh (V-V_b)\cosh W_b\cosh W\partial_rV}_{I_2}-\underbrace{2\cosh W\sinh W_b \partial_rW}_{I_3}\right.\\&
\left.\quad +\underbrace{2\cosh (V-V_b)\sinh W\cosh W_b\partial_rW}_{I_4}\right)
\end{split}
\end{equation}
We may use the asymptotics of Section \ref{sec7:infinity} to isolate the leading terms and estimate each of these subexpressions, namely
\begin{equation}
\begin{split}
I_0&=e^Z \left(\partial_r\log\rho+\partial_r Z\right)\\
&=\begin{cases}
(1+Z)\partial_r\log\rho+\partial_rZ+O(r^{-3-2\kappa}) & \text{for ALE}\\
(1+Z)\partial_r\log\rho+\partial_rZ+O(r^{-2-2\kappa}) & \text{for ALF and AF$_{\beta\ell}$}
\end{cases},
\end{split}
\end{equation}
while the hyperbolic distance estimates \eqref{gfobain0qhh} give
\begin{equation}
I_1=2\cosh \left(\mathrm{dist}_{\mathbb{H}^2}(\Psi,\Psi_b)\right)
=\begin{cases}
2+O(r^{-2-2\kappa}) & \text{for ALE}\\
2+O(r^{-1-2\kappa}) & \text{for ALF and AF$_{\beta\ell}$}
\end{cases}.
\end{equation} 
Furthermore
\begin{equation}
I_2=\begin{cases}
O(r^{-3-2\kappa}) & \text{for ALE}\\
2(V-V_b)\partial_rV+O(r^{-\frac{5}{2}-2\kappa}) & \text{for ALF and AF$_{\beta\ell}$}
\end{cases},
\end{equation}
and
\begin{equation}
I_4-I_3=\begin{cases}
O(r^{-3-2\kappa}) & \text{for ALE }\\
O(r^{-2-2\kappa}) & \text{for ALF and AF$_{\beta\ell}$}
\end{cases} .
\end{equation}
Therefore, we have 
\begin{equation}\label{G111}
\begin{split}
\mathrm{Tr}\left(G_b^{-1}\partial_rG\right)\!&=\!\begin{cases}
2(1\!+\!Z)\partial_r\log\rho+2\partial_rZ+O(r^{-3-2\kappa}) & \text{for ALE}\\
2(1\!+\!Z)\partial_r\log\rho+2\partial_rZ+(V\!-\!V_b)\partial_rV+O(r^{-2-2\kappa}) & \text{for ALF and AF$_{\beta\ell}$}
\end{cases}.
\end{split}
\end{equation}
Similar computations produce
\begin{equation}\label{G112}
\begin{split}
\mathrm{Tr}\left(G\partial_rG_b^{-1}\right)&=\begin{cases}
-2(1+Z)\partial_r\log\rho+O(r^{-3-2\kappa}) & \text{for ALE}\\
-2(1+Z)\partial_r\log\rho-2(V-V_b)\partial_rV_b+O(r^{-2-2\kappa}) & \text{for ALF and AF$_{\beta\ell}$}
\end{cases} .
\end{split}
\end{equation}
Also observe that
\begin{equation}
g_{\theta r}=G_{ij}A^j_rA^i_\theta=\begin{cases}
O(r^{-1-2\kappa}) & \text{for ALE}\\
O(r^{-2\kappa}) & \text{for ALF and AF$_{\beta\ell}$}
\end{cases},
\end{equation}
and thus
\begin{equation}\label{G113}
b^{\theta\theta}\partial_\theta g_{\theta r}+b^{\theta\theta}g_{\theta r}\partial_\theta\log\rho =\begin{cases}
O(r^{-3-2\kappa}) & \text{for ALE}\\
O(r^{-2-2\kappa}) & \text{for ALF and AF$_{\beta\ell}$}
\end{cases}.
\end{equation}

In the ALE case we combine \eqref{massdensity}, \eqref{G111}, \eqref{G112}, and \eqref{G113} together with
\begin{equation}
b_{rr}=r^{-2}b_{\theta\theta}=1, \quad\quad \quad
g_{rr}=r^{-2}g_{\theta\theta}=p^{-2}r^2e^{2\alpha}=e^{2\alpha-2\alpha_b},
\end{equation}
to obtain
\begin{equation}
\begin{split}
\left(\mathrm{div}_b \mathbf e-d\mathrm{Tr}_b\mathbf e\right)(\partial_r)&=-\partial_r\left(2\alpha-2\alpha_b+2Z\right)+\left(2\alpha-2\alpha_b-Z\right)\partial_r\log\rho+O_1(r^{-3-\kappa}),
\end{split}
\end{equation} 
whereas in the ALF and AF$_{\beta\ell}$ cases we use
\begin{equation}
b_{rr}=r^{-2}b_{\theta\theta}=1, \qquad\quad g_{rr}=r^{-2}g_{\theta\theta}=e^{2\alpha}\ell^2=e^{2\alpha-2\alpha_b},
\end{equation}
to find
\begin{equation}
\begin{split}
\left(\mathrm{div}_b \mathbf e-d\mathrm{Tr}_b\mathbf e\right)(\partial_r)&=-\partial_r\left(2\alpha-2\alpha_b+2Z\right)+\left(2\alpha-2\alpha_b-Z\right)\partial_r\log\rho\\
&\quad-(V-V_b)\partial_r V_b+O_1(r^{-2-\kappa}).
\end{split}
\end{equation}
\end{proof}

We are now ready to compute the flux integral of \eqref{Bon1} from the asymptotic end, in terms of the difference
of masses.
\begin{lemma}\label{LemmaIinfinity}
The boundary integral at infinity takes the form
\begin{equation}
\lim_{\varsigma_3\to 0}\lim_{\varsigma_2\to 0}\lim_{\varsigma_1\to 0}\mathcal{B}^{\boldsymbol{\varsigma}}_\infty=2\left(\mathrm{mass}_b(M,g)-\mathrm{mass}_b(M,g_o)\right)\,.
\end{equation}
\end{lemma}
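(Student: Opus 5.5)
The plan is to compute $\mathcal{B}^{\boldsymbol\varsigma}_\infty$ directly from the definition of $X$ in \eqref{defX} and compare it with Proposition \ref{massinfinity}, applied once to $g$ and once to $g_o$, both measured against the same model $b$. The $\varsigma_1,\varsigma_2$ limits are harmless here, since on $\partial B_{2/\varsigma_3}$ the integrand is integrable up to the axis by the asymptotics of Section \ref{sec7:infinity}. The task therefore reduces to a limit as $r\to\infty$ of a flux over the large sphere in $\mathbb{R}^3$.

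First I will rewrite that flux over the coordinate sphere $\mathcal{S}_r$ of the 4-dimensional model. The outward normal and area element of $\partial B_{2/\varsigma_3}\subset\mathbb{R}^3$ are related to $\partial_r$ and $d\mathcal{V}$ through the transformations \eqref{alerho}, \eqref{alfrho}. After integrating out the angles $\phi^1,\phi^2$, which contributes a factor $(2\pi)^2$ against the $2\pi$ of the azimuthal angle of $\delta_3$, one expects
\begin{equation*}
\int_{\partial B_{2/\varsigma_3}} X(\nu)\,dA=\frac{1}{2\pi}\int_{\mathcal{S}_r}X(\partial_r)\,d\mathcal{V}+o(1).
\end{equation*}
The $\frac{1}{2\pi}$ against the $\frac{1}{4\pi}$ in Proposition \ref{massinfinity} is what produces the factor $2$ in the statement. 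I would verify this normalization explicitly in the ALE case, where $\rho=\frac{r^2}{2}\sin 2\theta$ up to constants, and in the ALF case, where $\rho\sim r\sin\theta$ with the $\ell$ factor.

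Next I will insert $\alpha-\alpha_o=(\alpha-\alpha_b)-(\alpha_o-\alpha_b)$ and use $Z_o=0$ (Proposition \ref{detG=rho}, Remark). Then
\begin{equation*}
X(\partial_r)=\big[-2\partial_r(\alpha-\alpha_b+Z)+(2\alpha-2\alpha_b-Z)\partial_r\log\rho\big]-\big[-2\partial_r(\alpha_o-\alpha_b)+(2\alpha_o-2\alpha_b)\partial_r\log\rho\big]-(V-V_o)\partial_rV_o-(W-W_o)\partial_rW_o .
\end{equation*}
In the ALE case the last two terms are $O(r^{-1-\kappa})\cdot O(r^{-2-\kappa})$ against area $r^3$, so they vanish. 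The two brackets then give $4\pi(\mathrm{mass}_b(g)-\mathrm{mass}_b(g_o))$ by \eqref{massAEALE}.

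The ALF/AF$_{\beta\ell}$ case is the main obstacle. There $\partial_rV_b$ need not decay fast enough, and the terms $-(V-V_b)\partial_rV_b$ in \eqref{massAFALF} must match. Writing $V-V_o=(V-V_b)-(V_o-V_b)$, I need
\begin{equation*}
(V-V_o)\partial_rV_o=(V-V_b)\partial_rV_b-(V_o-V_b)\partial_rV_b+o(r^{-2}).
\end{equation*}
This should follow from $\partial_r(V_o-V_b)=O(r^{-3/2-\kappa})$ times $V-V_o=O(r^{-1/2-\kappa})$. I would first check it against the asymptotics of Section \ref{sec7:infinity}, where the ALF decay is sharper for the instanton, whose deviation from $\tilde b_{ALF}$ should be $O(r^{-1})$ with derivative $O(r^{-2})$. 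The $W$ term should be negligible because $\partial_r W_o$ decays like $r^{-2}$ for these models. Finally the terms $Z\partial_r\log\rho$ cancel consistently, since $Z_o=0$, and summing the two mass formulas gives the claimed $2(\mathrm{mass}_b(M,g)-\mathrm{mass}_b(M,g_o))$.
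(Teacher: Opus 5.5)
Your proposal is correct and follows the paper's proof: you rewrite the $\delta_3$-flux through $\partial B_{2/\varsigma_3}$ as $\frac{1}{2\pi}\int_{\mathcal{S}_r}X(\partial_r)\,d\mathcal{V}$, using $\nu=pr^{-1}\partial_r$, $dA=\frac{r^4}{2p^2}\sin 2\theta\,d\theta\, d\varphi$ in the ALE case and $\nu=\ell^{-1}\partial_r$, $dA=\ell^2 r^2\sin\theta\,d\theta\, d\varphi$ in the ALF/AF case, and then compare with Proposition \ref{massinfinity} applied to $g$ and to $g_o$. Your explicit checks that the cross term $(V-V_o)\partial_r(V_o-V_b)=O(r^{-2-2\kappa})$ and that the $W$-terms are negligible only spell out what the paper leaves to ``comparing with Proposition \ref{massinfinity}.''
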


\begin{proof} 
Recall from \eqref{Bon1} that 
\begin{equation}
\mathcal{B}^{\boldsymbol{\varsigma}}_\infty=\int_{\partial B_{2/\varsigma_3}\cap\partial\Omega_{\boldsymbol{\varsigma}}}\!\!\!\!\!\!\!\!\!\! X(\nu)\, dA\, ,
\end{equation} 
where $X$ is given by \eqref{defX}. Using the asymptotics in Section \ref{sec7:infinity}, for ALE geometries we have 
\begin{equation}
p^{-1}r X(\nu)=-2\partial_r\left(\alpha-\alpha_o+Z\right)+\left(2\alpha-2\alpha_o-Z\right)\partial_r\log\rho+O(r^{-3-\kappa}),
\end{equation}
with $dA=\frac{r^4}{2p^2}\sin 2\theta d\theta d\varphi$ and $\nu=pr^{-1}\partial_r$. Moreover, for ALF and AF$_{\beta\ell}$ geometries we have
\begin{equation}
\ell X(\nu)=-2\partial_r\left(\alpha-\alpha_o+Z\right)+\left(2\alpha-2\alpha_o-Z\right)\partial_r\log\rho-\left(V-V_o\right)\partial_rV_o+O(r^{-2-\kappa}),
\end{equation}with $dA=\ell^2 r^2\sin\theta d\theta d\varphi$ and $\nu=\ell^{-1}\partial_r$. 
The desired conclusion now follows by comparing with Proposition \ref{massinfinity}.
\end{proof}

\section{Proof of Theorem \ref{main.theorem}} 
\label{sec6} \setcounter{equation}{0}
\setcounter{section}{6}

By the scalar curvature equation \eqref{integralscalarcurvature}, Proposition \ref{Prop:finiteness}, and Theorem \ref{thm.convexity}, we have 
\begin{equation}\label{fpapfj-qa9hj-y09qh}
\lim_{\varsigma_3\to 0}\lim_{\varsigma_2\to 0}\lim_{\varsigma_1\to 0}\left(\mathcal{B}^{\boldsymbol{\varsigma}}_\text{axis}+\mathcal{B}^{\boldsymbol{\varsigma}}_\text{corner}+\mathcal{B}^{\boldsymbol{\varsigma}}_{\infty}\right)\geq \mathcal{I}(\Psi) \geq C\left(\int_{\mathbb{R}^3}
\operatorname{dist}_{\mathbb{H}^2}^{6}(\Psi,\Psi_{o})\,dx
\right)^{1/3}. 
\end{equation}Combining this with Lemmas \ref{lem4.1}, \ref{lem4.2}, and \ref{LemmaIinfinity} produces the following desired inequality
\begin{equation}\label{PMTinequality1}
\begin{split}
\mathrm{mass}_b\,(M,g)- \mathrm{mass}_b\,(M,{g}_o)&\geq 2\pi\sum_{n=1}^{N+1}\int_{\Gamma_n}(\pmb{\vartheta}^n -\pmb{\vartheta}^n_o) dz \,. 
\end{split}
\end{equation}
If equality holds, then the bulk integral terms of \eqref{integralscalarcurvature} yield $R=Z=0$, and $|F^i|=|dA^i|=0$, $i=1,2$. The latter equation implies, as in the proof of Proposition \ref{propA0}, that by choosing appropriate coordinates we may assume $A^i=0$ for $i=1,2$. Moreover, the right-hand side of \eqref{fpapfj-qa9hj-y09qh} shows that $\Psi=\Psi_o$. The scalar curvature formula \eqref{scal2} then shows that
\begin{equation}
\Delta_2\left(\alpha-\alpha_o\right)=0.
\end{equation} 
Furthermore, the asymptotics in Section \ref{sec7:infinity} give boundary conditions at infinity, namely $\alpha-\alpha_o=o(1)$ as $r\to\infty$. Additionally, by \eqref{regphi} we have $\alpha=\alpha_o$ on the semi-infinite rods $\Gamma_1$ and $\Gamma_{N+1}$ since conical singularities are absent at those locations. If $\alpha-\alpha_o$ is positive somewhere, then according to the maximum principle the global maximum must be achieved at a point on a finite axis rod, $p_{\mathrm{max}}\in\Gamma_i$ where $1<i<N+1$, and the maximum value is not achieved at any interior point. Moreover, $p_{\mathrm{max}}$ cannot be at a corner point since \eqref{alphar} shows that $\alpha=\alpha_o$ at such points. Thus, $p_{\mathrm{max}}$ must be in the interior of $\Gamma_i$, and according to \eqref{alphaax} the difference $\alpha-\alpha_o$ is $C^1$ at the interior of axis rods. We may then apply the Hopf lemma to find that $\partial_{\rho}(\alpha-\alpha_o)(p_{\mathrm{max}})<0$. However, this contradicts the asymptotics \eqref{alphaax}, and hence we conclude that $\alpha\leq \alpha_o$ holds globally. A similar argument yields the opposite inequality, so that $\alpha=\alpha_0$ on $\mathbb{R}^3$. Therefore, $(M,g)$ is isometric to $(M,g_o)$.



\section{Asymptotics Near Infinity, Axes, and Corners}
\label{sec7:asymptotics} \setcounter{equation}{0}
\setcounter{section}{7}

In this section, we record the asymptotic behavior in Brill coordinates of the metric and related functions of toric ALE and ALF manifolds, in a neighborhood of the asymptotic region, the axes, and corners. 

\subsection{Asymptotic end}\label{sec7:infinity}
Consider a toric ALE, ALF-$A_{k-1}$, or AF$_{\beta\ell}$ manifold $(M,g)$, with metric expressed in the Brill coordinates of Section \ref{brill}. To determine the asymptotic behavior of the metric components, and the harmonic map component functions, we will use
the explicit expression of model metrics $b_{ALE}$, $b_{ALF}$, and $b_{AF}$ from Section \ref{modelgeom}, along with the decay relations \eqref{decay12}.\medskip


\noindent\emph{ALE asymptotics}. 
The coordinate transformation between polar and cylindrical Brill coordinates in the ALE case is given by
\begin{equation}\label{alerho}
\rho=\frac{r^2}{2p}\sin 2\theta, \qquad z=\frac{r^2}{2p}\cos 2\theta,
\end{equation} 
where $r > 0$, $\theta \in (0,\pi/2)$.
As a simple example, consider the standard Euclidean metric on $\mathbb{R}^4$, expressed in polar Brill (or polar Hopf) coordinates  $(r,\theta, \phi^1, \phi^2)$ by
\begin{equation}
    \delta_4 = d r^2 + r^2 \left(d \theta^2 + \sin^2\theta (d\phi^1)^2 + \cos^2\theta (d\phi^2)^2 \right),
\end{equation} 
where $\phi^1$, $\phi^2$ each generate $2\pi-$periodic rotations. Using \eqref{alerho} with $p=1$ produces the Euclidean metric in cylindrical Brill coordinates
\begin{equation}
    \delta_4 = \frac{d\rho^2 + d z^2}{2\sqrt{\rho^2 + z^2}} + \left(\sqrt{\rho^2 + z^2} - z) (d\phi^1)^2 + (\sqrt{\rho^2 + z^2} + z) (d \phi^2)^2 \right).
\end{equation} 
Note that the conformal factor of the orbit space is not a constant as in the ALF setting. 

In the genreal case of \eqref{m1},
the decay
\begin{equation}
     |\mathring{\nabla}^l(g-b_{ALE})|_{b_{ALE}}=O(r^{-1-\kappa-l}),\quad\quad l=0,1,2,
\end{equation}
where $\kappa>0$, combined with the asymptotics at semi-infinite axes, implies the following fall-off for the metric components
 \begin{equation}
    \alpha=-\log\left(\frac{r}{p}\right)+O_1(r^{-1-\kappa}),\quad \quad \frac{q}{p}A^{1}_a+A^2_a=\frac{1}{\cos\theta}O_1(r^{-3-\kappa}),
\end{equation}
\begin{equation}
    A^1_a= \frac{1}{\sin\theta}O_1(r^{-3-\kappa}),\qquad G_{12}=\frac{q}{p}r^2\cos^2\theta\left(1+\rho^2O_1(r^{-5-\kappa})\right)+\rho^2O_1(r^{-3-\kappa}),
\end{equation}
\begin{equation}
G_{11}=r^2\left(\frac{1}{p^2}\sin^2\theta+\frac{q^2}{p^2}\cos^2\theta\right)\left(1+\rho^2 O_1(r^{-5-\kappa})\right),\qquad G_{22}= r^2\cos^2\theta\left(1+\rho^2 O_1(r^{-5-\kappa})\right).
\end{equation}
Furthermore, using the definition of the hyperbolic Fermi coordinate functions $V$, $W$ in \eqref{Def:VW}, and the function $Z$ from \eqref{def:Z}, we obtain the following asymptotics
\begin{equation}\label{ALE:VW}
 V=\frac{1}{2}\log\left(\frac{\tan^2\theta+q^2}{p^2}\right)+\rho^2O_1(r^{-5-\kappa}),\quad\sinh W=q\cot\theta\left(1+\rho^2 O_1(r^{-5-\kappa})\right)+\rho O_1(r^{-3-\kappa})\,,
\end{equation}
\begin{equation}\label{ALE:ZDvDWDZ}
 |\nabla V|^2=\frac{p^2\tan^2\theta}{r^4\left(\sin^2\theta+q^2\cos^2\theta\right)^2}+\frac{p^2\sin^2\theta}{\left(\sin^2\theta+q^2\cos^2\theta\right)^2}O(r^{-5-\kappa})+\sin^2\theta O(r^{-6-2\kappa}),
\end{equation} 
\begin{equation}\label{ALE:ZDvDWDZ2}
Z=O_1(r^{-1-\kappa}),\qquad|\nabla W|^2=\frac{p^2q^2\left(r^{-4}+O(r^{-5-\kappa})\right)}{\sin^2\theta( \sin^2\theta + q^2 \cos^2\theta)}+O(r^{-6-2\kappa}),\qquad |\nabla Z|= O(r^{-3-\kappa})\,.
\end{equation}

Now consider the corresponding toric gravitational instanton $(M,g_o)$.
By \cite[Theorem, page 314]{bando1989construction}, any ALE Ricci flat 4-manifold with $L^2$-Riemann tensor admits fourth order decay, that is 
\begin{equation}
     |\mathring{\nabla}^l(g_o-b_{ALE})|_{b_{ALE}}=O(r^{-4-l}),\quad\quad l=0,1,2.
 \end{equation}
Therefore we have 
 \begin{equation}
    \alpha_o=-\log\left(\frac{r}{p}\right)+O_1(r^{-4}),\quad  \quad G_{o12}=\frac{q}{p}r^2\cos^2\theta\left(1+\rho^2O_1(r^{-8})\right)+\rho^2O_1(r^{-6}),
\end{equation}
\begin{equation}
G_{o11}=r^2\left(\frac{1}{p^2}\sin^2\theta+\frac{q^2}{p^2}\cos^2\theta\right)\left(1+\rho^2O_1(r^{-8})\right),\qquad G_{o22}= r^2\cos^2\theta\left(1+\rho^2O_1(r^{-8})\right).
\end{equation}
Again using the definitions of $V_o$, $W_o$ from \eqref{Def:VW} and $Z$ from \eqref{def:Z}, it follows that
\begin{equation}\label{ALE:VWhatmonic}
 V_o=\frac{1}{2}\log\left(\frac{\tan^2\theta+q^2}{p^2}\right)+\rho^2O_1(r^{-8}),\qquad \sinh W_o=q\cot\theta\left(1+\rho^2O_1(r^{-8})\right)+\rho O_1(r^{-6}).
\end{equation}
Moreover, we have 
\begin{equation}
\alpha-\alpha_o=O_1(r^{-1-\kappa}),\qquad V-V_o=\rho^2 O_1(r^{-5-\kappa}),\qquad W-W_o=\rho^2 O_1(r^{-5-\kappa}).
\end{equation}
\smallskip

\noindent\emph{ALF-$A_{k-1}$ asymptotics}. 
The coordinate transformation between polar and cylindrical Brill coordinates in the ALF-$A_{k-1}$ case is given by
\begin{equation}\label{alfrho}
\rho=\ell r\sin\theta,\qquad z=\ell r\cos\theta. 
\end{equation}
The asymptotic expansion of the metric 
 \begin{equation}
     |\mathring{\nabla}^l (g-b_{ALF})|_{b_{ALF}}=O(r^{-\frac{1}{2}-\kappa-l}),\quad\quad l=0,1,2,
 \end{equation}
where $\kappa>0$, combined with the asymptotics at semi-infinite axes, implies the following fall-off for the metric components
\begin{equation}
    \alpha=-\log\ell+ O_1(r^{-\frac{1}{2}-\kappa}),\quad  A^1_a= \rho^{-1}O_1(r^{-\frac{1}{2}-\kappa}),\quad A^2_a+k\cos^2\left(\frac{\theta}{2}\right)A^1_a= O_1(r^{-\frac{1}{2}-\kappa}),
\end{equation}
\begin{equation}
G_{12}=\ell^2k \cos^2\left(\frac{\theta}{2}\right)\left(1+\rho^2 O_1(r^{-\frac{5}{2}-\kappa})\right),\qquad G_{22}=\ell^2\left(1 +\rho^2O_1(r^{-\frac{5}{2}-\kappa})\right),
\end{equation}
\begin{equation}
G_{11}=\left(r^2\sin^2\theta+\ell^2 k^2\cos^4\left(\frac{\theta}{2}\right)\right)\left(1+\rho^2 O_1(r^{-\frac{5}{2}-\kappa})\right).
\end{equation}
As above, using the definition of the hyperbolic Fermi coordinate functions $V$, $W$ in \eqref{Def:VW}, and the function $Z$ from \eqref{def:Z}, we obtain the following asymptotics
\begin{equation}\label{asyminfALF1}
    V = \frac{1}{2} \log \left[ \frac{\rho^2}{\ell^4} + \frac{k^2}{4} \left(1 + \frac{z}{\sqrt{\rho^2 + z^2}} \right)^2\right] +\rho^2O_1(r^{-\frac{5}{2}-\kappa}),\qquad |\nabla V|=\tan\left(\frac{\theta}{2}\right)O(r^{-1}), 
\end{equation}
\begin{equation}\label{asyminfALF12}
\sinh W =\frac{\ell^2 k}{\rho}\cos^2\left(\frac{\theta}{2}\right)\left(1+\rho^2O_1(r^{-\frac{5}{2}-\kappa})\right), \qquad |\nabla W| = O(r^{-2}),
\end{equation} 
\begin{equation} 
Z=O_1(r^{-\frac{1}{2}-\kappa}),\text{ }\quad\qquad\text{ }|\nabla Z|=O(r^{-\frac{3}{2}-\kappa}).
\end{equation}
The corresponding quantities associated with the toric gravitational instanton $(M,g_o)$ satisfy analogous asymptotics, as can be shown from \cite[Proposition 2.7]{Kunduri:2026xvc}.
In particular, we have 
\begin{equation}
\alpha-\alpha_o=O_1(r^{-\frac{1}{2}-\kappa}),\qquad V-V_o=\rho^2O_1(r^{-\frac{5}{2}-\kappa}),\qquad W-W_o=\rho^2O_1(r^{-\frac{5}{2}-\kappa}).
\end{equation}
\smallskip

\noindent\emph{AF$_{\beta\ell}$ asymptotics}. 
The coordinate transformation between polar and cylindrical Brill coordinates in the AF$_{\beta\ell}$ case is the same as \eqref{alfrho},
while the asymptotic expansion of the metric is given by
 \begin{equation}
     |\mathring{\nabla}^l (g-b_{AF})|_{b_{AF}}=O(r^{-\frac{1}{2}-\kappa-l}),\quad\quad l=0,1,2,
 \end{equation}
where $\kappa>0$. When combined with the asymptotics at semi-infinite axes, this implies the following fall-off for the metric components
\begin{equation}
    \alpha=-\log\ell+O_1(r^{-\frac{1}{2}-\kappa}),\quad  A^2_a=O_1(r^{-\frac{1}{2}-\kappa}),\quad A^1_a+\beta \ell A^2_a=\rho^{-1}O_1(r^{-\frac{1}{2}-\kappa}), 
\end{equation}
\begin{equation}
    G_{12}=\beta\ell r^2\sin^2\theta\left(1+ \rho^2 O_1(r^{-\frac{5}{2}-\kappa})\right),\qquad G_{11}=r^2\sin^2\theta\left(1+\rho^2 O_1(r^{-\frac{5}{2}-\kappa})\right),
\end{equation} 
\begin{equation}
G_{22}=\left(\ell^2+\beta^2\ell^2r^2\sin^2\theta\right)\left(1+\rho^2O_1(r^{-\frac{5}{2}-\kappa})\right),
\end{equation} 
or 
\begin{equation}
    G_{12}-\beta\ell G_{11}=\rho^2 O_1(r^{-\frac{3}{2}-\kappa}),\qquad G_{11}=\ell^{-2}\rho^2\left(1+\rho^2O_1(r^{-\frac{5}{2}-\kappa})\right),
\end{equation}
\begin{equation}
    G_{22}-2\beta\ell G_{12}+\beta^2\ell^2G_{11}=\ell^2+\rho^2O_1(r^{-\frac{5}{2}-\kappa}).
\end{equation}
Using the definition of the hyperbolic Fermi coordinate functions $V$, $W$ in \eqref{Def:VW}, and the function $Z$ from \eqref{def:Z}, we obtain the following asymptotics
\begin{equation}\label{asyminfAF1}
 V=\log(\ell^{-2}\rho)+\rho^2O_1(r^{-\frac{5}{2}-\kappa}),\qquad  W=\rho O_1(r^{-\frac{3}{2}-\kappa}),\qquad Z=O_1(r^{-\frac{1}{2}-\kappa}),
\end{equation}
\begin{equation}\label{asyminfAF2}
 |\nabla V|=\frac{1}{\rho}+O(r^{-\frac{3}{2}-\kappa}),\qquad |\nabla W|=O(r^{-\frac{3}{2}-\kappa}),\qquad |\nabla Z|=O(r^{-\frac{3}{2}-\kappa}).
\end{equation}
The corresponding quantities associated with the toric gravitational instanton $(M,g_o)$ satisfy analogous asymptotics, as may be derived from \cite[Section 4]{LiSun}.
In particular, we have 
\begin{equation}
\alpha-\alpha_0=O_1(r^{-\frac{1}{2}-\kappa}),\qquad V-V_o=\rho^2O_1(r^{-\frac{5}{2}-\kappa}),\qquad W-W_o=\rho^3O_1(r^{-\frac{7}{2}-\kappa})\,.
\end{equation}\smallskip

\subsection{Axes}\label{sec7.axis} 
To derive the asymptotic behavior of metric components near interior points of an axis rod $\Gamma_n$, it is convenient to distinguish between three types of rod structure $\mathbf{v}_n$. 
\medskip

\noindent\emph{Case I: $\mathbf{v}_n=(1,0)$.} In this situation the model metric expressed in Brill coordinates takes the form 
\begin{equation}
g_\mathrm{cone}=e^{2\alpha_c(z)}\left(d\rho^2+dz^2\right)+c_{11}(z)\rho^2(d\phi^1+c_{12}(z)d\phi^2)^2+c_{22}(z)(d\phi^2)^2\,,
\end{equation}
where $c_{ii}(z)>0$ for $i=1,2$. According to \eqref{ccc1},
upon approach to the interior of the rod we have
\begin{equation}
|\mathring{\nabla}^l (g-g_{\mathrm{cone}})|_{g_{\mathrm{cone}}}=O(\rho^{1+\zeta-l}),\quad\quad l=0,1,
\end{equation}
for some $\zeta> 0$.
It follows that the metric components satisfy 
\begin{equation}\label{alphaax}
\alpha=\alpha_c+O_1(\rho^{1+\zeta}),\qquad G_{11}=c_{11}(z)\rho^2+O_1(\rho^{3+\zeta}),\qquad G_{22}=c_{22}(z)+O_1(\rho^{1+\zeta}),
\end{equation}
\begin{equation}
G_{12}=c_{12}(z)c_{11}(z)\rho^2+O_1(\rho^{2+\zeta}),\qquad A^1_a=O_1(\rho^{\zeta}),\qquad A^2_a=O_1(\rho^{1+\zeta})\,.
\end{equation}
Moreover, using the relation $\Phi=\rho^{-1}e^{-Z}G$ and the fact that $\det\Phi=1$, we have 
\begin{equation}\label{phi88}
  Z=\frac{1}{2}\log\left(c_{11}c_{22}\right)+O_1(\rho^{1+\zeta}),\qquad    \Phi=\begin{pmatrix}
        \rho\sqrt{\frac{c_{11}}{c_{22}}}+O_1(\rho^{2+\zeta}) &  c_{12}\sqrt{\frac{c_{11}}{c_{22}}}\rho+O_1(\rho^{1+\zeta})\\
         c_{12}\sqrt{\frac{c_{11}}{c_{22}}}\rho+O_1(\rho^{1+\zeta}) & \rho^{-1}\sqrt{\frac{c_{22}}{c_{11}}}+O_1(\rho^\zeta)
    \end{pmatrix}\,.
\end{equation} 
Furthermore, combining this with the definitions of $V$ and $W$ in \eqref{Def:VW} produces
\begin{equation}\label{v1app1}
V=\log\left(\rho\sqrt{\frac{c_{11}}{c_{22}}}\right)+O_1(\rho^{1+\zeta}),\qquad  W=\left(c_{12}-\beta\ell\right)\sqrt{\frac{c_{11}}{c_{22}}}\rho+O_1(\rho^{1+\zeta}),
\end{equation} 
\begin{equation}\label{v1app12}
|\nabla V|^2=\rho^{-2}+\left(\partial_z\log\left(\sqrt{\frac{c_{11}}{c_{22}}}\right)\right)^2+O(\rho^{\zeta-1}),\qquad |\nabla W|^2=\frac{c_{11}}{c_{22}}\left(c_{12}-\beta\ell\right)^2+O(\rho^{\zeta}),
\end{equation}
\begin{equation}\label{g-0u-9}
\nabla V \cdot \nabla W
=
\frac{1}{\rho}\sqrt{\frac{c_{11}}{c_{22}}}\,(c_{12}-\beta\ell)
+ O(\rho^{\zeta-1}).
\end{equation}
By \cite[Proposition 4.12]{LiSun}, the harmonic map $\Psi_o =(V_o,W_o)$ admits the same asymptotics as \eqref{v1app1}--\eqref{g-0u-9}. However, note that the $\alpha_c$-component functions associated with $g$ and $g_o$ do not necessarily agree.\medskip

\noindent\emph{Case II: $\mathbf{v}_n=(0,1)$}. In this situation the model metric expressed in Brill coordinates takes the form 
\begin{equation}
g_\mathrm{cone}=e^{2\alpha_c(z)}\left(d\rho^2+dz^2\right)+ + c_{11}(z) (d\phi^1)^2 + c_{22}(z)\rho^2(d\phi^2+c_{12}(z)d\phi^1)^2 \,.
\end{equation} 
where $c_{ii}(z)>0$ for $i=1,2$. As in Case I we may use \eqref{ccc1} to find
\begin{equation}
  Z=\frac{1}{2}\log\left(c_{11}c_{22}\right)+O_1(\rho^{1+\zeta}),\qquad   \Phi=\begin{pmatrix} \rho^{-1}\sqrt{\frac{c_{11}}{c_{22}}}+O_1(\rho^\zeta)
 &  c_{12}\sqrt{\frac{c_{22}}{c_{11}}}\rho+O_1(\rho^{1+\zeta})\\
    c_{12}\sqrt{\frac{c_{22}}{c_{11}}}\rho+O_1(\rho^{1+\zeta}) &         \rho\sqrt{\frac{c_{22}}{c_{11}}}+O_1(\rho^{2+\zeta})
    \end{pmatrix}\,.
\end{equation} 
When $\beta =0$ it follows that
\begin{equation}\label{v1app2}
    V=-\log\left(\rho\sqrt{\frac{c_{22}}{c_{11}}}\right)+O_1(\rho^{1+\zeta})\,, \qquad  W=c_{12}\sqrt{\frac{c_{22}}{c_{11}}}\rho+O_1(\rho^{1+\zeta}),
\end{equation} 
\begin{equation}
    |\nabla V|^2 = \frac{1}{\rho^2}  + O(\rho^{-1 + \zeta}), \quad |\nabla W|^2 = \frac{c_{12}^2 c_{22}}{c_{11}} + O(\rho^\zeta), \quad \nabla V \cdot \nabla W = -\frac{c_{12}}{\rho} \sqrt{\frac{c_{22}}{c_{11}}} + O(\rho^{\zeta-1}),
\end{equation} 
whereas when $\beta \neq 0$ we have
\begin{equation}
    V = -\log |\beta \ell| + O_1(\rho^{1 + \zeta}), \qquad \pm W = \log \rho - 
    \log(2|\beta \ell|)+ O_1(1),
\end{equation} 
\begin{equation}\label{???}
  |\nabla V|^2 = O(\rho^{2\zeta}), \qquad  |\nabla W|^2 = \frac{1}{\rho^2} + O(\rho^{-1}), \quad \nabla V \cdot \nabla W = O(\rho^{\zeta-1}),
\end{equation}
where $\pm =\mathrm{sgn}(\beta)$.
Again, the harmonic map $\Psi_o =(V_o,W_o)$ admits the same asymptotics as \eqref{v1app2}--\eqref{???}.\medskip

\noindent\emph{Case III: $\mathbf{v}_n=(v^1_n,v^2_n)\in(\mathbb{Z}\backslash \{0\})^2$}. 
In this general situation, we may use a transformation matrix $B=(b_{kl})\in SL(2,\mathbb{Z})$ to reduce back to Case I. In particular, using \eqref{phi88} from Case I it holds that
\begin{equation}
    \begin{split}
    \Phi&=B^t\begin{pmatrix}
        \rho\sqrt{\frac{c_{11}}{c_{22}}}+O_1(\rho^{2+\zeta}) &  c_{12}\sqrt{\frac{c_{11}}{c_{22}}}\rho+O(\rho^{1+\zeta})\\
         c_{12}\sqrt{\frac{c_{11}}{c_{22}}}\rho+O(\rho^{1+\zeta}) & \rho^{-1}\sqrt{\frac{c_{22}}{c_{11}}}+O(\rho^\zeta)
    \end{pmatrix}B\, , \quad\qquad  B = \begin{pmatrix} w^2_n & -w^1_n \\ -v^2_n & v^1_n \end{pmatrix},
    \end{split}
\end{equation} 
where $w^i_n \in \mathbb{Z}$ are chosen so that $\det B = 1$. This implies that the individual components of $\Phi$ admit the expansions
\begin{equation}
\Phi_{kl}=\rho^{-1}\sqrt{\frac{c_{22}}{c_{11}}}b_{2k}b_{2l}+\rho\sqrt{\frac{c_{11}}{c_{22}}}B_k^t\begin{pmatrix}
        1 &  c_{12}\\
         c_{12} & 0    \end{pmatrix}B_l+O_1(\rho^{1+\zeta})\,,
\end{equation}
where $B_k$ is the kth column of $B$. 
Note that it may be assumed that $v^1_n+\beta\ell v^2_n \neq 0$, otherwise $\beta\ell \in \mathbb{Q}$ and we can change the torus generators to reduce back to the case when $\beta=0$. We then have
\begin{equation}\label{axiscase2}
V=\log\left(\frac{|b_{21}|}{|b_{22}-\beta\ell b_{21}|}\right)+O_1(\rho^2),\quad \pm W=-\log\rho+\log\left(2|b_{21}(b_{22}-\beta\ell b_{21})|\sqrt{\frac{c_{22}}{c_{11}}}\right)+O_1(\rho^{2}),
\end{equation} 
where $\pm =\sgn(b_{21}(b_{22}-\beta\ell b_{21}))$, and 
\begin{equation}\label{axiscase2234}
    |\nabla V|=O(\rho),\qquad \qquad\quad|\nabla W|=\frac{1}{\rho}+O(1)\,.
\end{equation} 
As before, the harmonic map $\Psi_o=(V_o,W_o)$ admits the same asymptotics as \eqref{axiscase2} and \eqref{axiscase2234}. 
\medskip

\subsection{Corners}\label{sec7:corner}
In a neighborhood of corner point occuring at height $z_n$ on the $z$-axis, the model metric $g_{\mathrm{corner}}$ expressed in Brill coordinates takes the form \eqref{metriccornercone}, and according to \eqref{cccc} we have
\begin{equation}
|\mathring{\nabla}^l (g-g_{\mathrm{corner}})|_{g_{\mathrm{corner}}}=O(r_n^{2-l}),\quad\quad l=0,1,
\end{equation}
where $r_n$ denotes the $g_{\mathrm{corner}}$-distance to the corner point and
the relation between radial and cylindrical Brill coordinates is given by
\begin{equation}\label{cornerrho}
\rho=\frac{r_n^2}{2}\sin 2\theta,\qquad z-z_n=\frac{r_n^2}{2}\cos 2\theta.
\end{equation}
It follows that
\begin{equation}\label{alphar}
\alpha=-\log(r_n)+O_1(r_n^{2}),\qquad G_{11}=c_1^2r_n^2\sin^2\theta\left(1+O_1(r_n^{2})\right),\qquad G_{12}=\rho O_1(r_n^{2})\,,
\end{equation}
\begin{equation}
G_{22}=c_2^2r_n^2\cos^2\theta\left(1+O_1(r_n^{2})\right),\qquad A^1_a=\frac{1}{\sin\theta}O_1(1),\qquad 
A^2_a =\frac{1}{\cos\theta}O_1(1).
\end{equation}
Next, using the definition of the hyperbolic Fermi coordinate functions $V$, $W$ in \eqref{Def:VW}, and the function $Z$ from \eqref{def:Z}, we obtain the following asymptotics
\begin{equation}\label{--}
V=\frac{1}{2}\log\left(\frac{c_1^2\tan\theta}{c_2^2\cot\theta+\beta^2\ell^2c_1^2\tan\theta}\right)+O_1(r_n^{2}),\qquad W=\sinh^{-1}\left(-\beta\ell\frac{c_1^2}{|c_1c_2|}\tan\theta\right)+O_1(r_n^{2})\,,
\end{equation}
\begin{equation}
|\nabla V|^2=\frac{c_2^4\cos^4\theta}{\rho^2\left(c_2^2\cos^2\theta+\beta^2\ell^2c_1^2\sin^2\theta\right)^2}+ O(r_n^{-2}),\qquad Z=\log|c_1c_2|+O_1(r_n^{2}),
\end{equation}
\begin{equation}\label{---}
|\nabla W|^2=\frac{\beta^2\ell^2c_1^2\sec^4\theta}{r_n^4\left(c_2^2+\beta^2\ell^2c_1^2\tan^2\theta\right)}+|\beta| O(r_n^{-2})+O(1), \qquad |\nabla Z|=O(1).
\end{equation}
The harmonic map $\Psi_o=(V_o,W_o)$ admits the same asymptotics as \eqref{--}--\eqref{---}, as may be derived from
\cite[Section 4.2.2]{LiSun}.

\section{Examples} 
\label{sec8} \setcounter{equation}{0}
\setcounter{section}{8}
In this section, we provide computations of the mass and other relevant quantities for some well-known explicitly known families of gravitational instantons in the three asymptotic classes. Moreover, Theorem \ref{main.theorem} will be illustrated in the context of
the Reissner-Nordstr\"{o}m manifold.

\subsection{AF\texorpdfstring{$_{\beta\ell}$}{AF betal} manifolds} 
We consider here two explicit examples of asymptotically flat gravitational instantons, with $\beta\neq 0$ and $\beta=0$, and determine their masses and harmonic map components.

\subsubsection{Kerr instanton}
The two-parameter family of Kerr instantons $(\mathbb{R}^2 \times \mathbb{S}^2, g_K)$ are AF$_{\beta\ell}$, with the smooth Ricci flat metric expressed in radial Brill coordinates as
\begin{equation}
    \begin{aligned}
    g_K &= \frac{\ell^2 f}{\Sigma}(d\phi^2 + \frac{a}{\ell} \sin^2\theta(d\phi^1 + \beta \ell d\phi^2))^2 + \frac{\sin^2\theta}{\Sigma} ((r^2 - a^2) (d\phi^1 + \beta \ell d\phi^2) - a \ell d\phi^2)^2\\
    &\quad + \Sigma \left(\frac{dr^2}{f} + d\theta^2\right),
\end{aligned} 
\end{equation} 
where $(\phi^1,\phi^2)$ are independent $2\pi-$periodic coordinates, $f = r^2 - 2 m r - a^2$, and $\Sigma = r^2 - a^2 \cos^2\theta$. The solution is parametrized by $(m, a)$ where without loss of generality we may assume that $a\geq 0$. The radial coordinate $r \in (r_+, \infty)$ where $r_+:= m + \sqrt{m^2 + a^2}$ is the real, positive root of $f$ and $\theta \in (0,\pi)$. It is convenient to eliminate the parameter $m$ using
\begin{equation}
    m = \frac{r_+^2 - a^2}{2r_+}.
\end{equation} 
Notice that positive-definiteness of the metric requires $r_+ > a$ to ensure $\Sigma > 0$.
The asymptotic geometry is characterized by $(\ell, \beta)$ where
\begin{equation}
    \beta = \frac{a}{r_+^2 - a^2}, \qquad \ell = \frac{2r_+(r_+^2 - a^2)}{r_+^2 + a^2},
\end{equation} 
and the canonical coordinates $(\rho,z)$ are related to $(r,\theta)$ by
\begin{equation}
    \rho = \ell \sqrt{f} \sin\theta, \qquad z = \ell (r - m) \cos\theta,
\end{equation} 
where $\gamma = L^2$ (c.f. \eqref{asympt}).  There are three axis rods, namely
\begin{enumerate}[label=(\roman*)]
    \item a semi-infinite rod $(-\infty, z_1]$, with $z_1 = -\ell(r_+ - m)$ and rod structure $\mathbf{v}_1 = (1,0)$;
    \item a finite rod $[z_1,z_2]$, with $z_2 = \ell(r_+ - m)$ (corresponding to $r=r_+$ and $\theta\in[0, \pi]$) having rod structure $\mathbf{v}_2=(0,1)$; 
    \item a semi-infinite rod $[z_2,\infty)$ with rod structure $\mathbf{v}_3=(1,0)$.
\end{enumerate} 
As $r\to\infty$, we can read off
\begin{equation}
    \alpha_{Kerr} = \frac{1}{2}\log\left(\frac{\Sigma}{\ell^2(f + (m^2+a^2)\sin^2\theta)}\right) = -\log \ell+\frac{m}{r}+O_1(r^{-2}),
\end{equation} 
and 
\begin{equation}
V_{Kerr}=\log \left(\frac{\rho}{\ell^2}\right) + \frac{2m}{r} + O_1(r^{-2}), \qquad W_{Kerr} = -\frac{2m a \sin\theta}{r^2} + O_1(r^{-3}).
\end{equation} 
Therefore 
\begin{equation}
\alpha_{Kerr}-\alpha_b = m/r + O_1(r^{-2}), \quad V_{Kerr}-V_b=2m/r + O_1(r^{-2}),\quad W_{Kerr} - W_b =O_1(r^{-2}).
\end{equation}
Since $\partial_r \log \rho = 1/r + m/r^2 + O(r^{-3})$, it follows from the formula \eqref{massAFALF} that the mass is
\begin{equation}
\begin{split}\label{massKerr}
\mathrm{mass}_b(M,g_K)& =4\pi m\ell =4\pi \frac{(r_+^2 - a^2)^2}{(r_+^2 + a^2)} > 0.
\end{split}
\end{equation} 
The one-parameter family of AF$_0$ Schwarzschild instantons is recovered when $a=0$, and its asymptotic $S^1$ has bounded length given by $2\pi \ell = 8 \pi m$, and thus its mass is $16\pi m^2$. 

\subsubsection{Chen--Teo instanton} 
The two-parameter family \cite{Chen:2011tc} of Chen-Teo gravitational instantons $(CP^2 \setminus S^1, g_{CT})$  are AF$_{\beta\ell}$ with a smooth Ricci flat metric given explicitly in \cite[Appendix B2]{KunduriLucietti}: 
\begin{align}
    g_{CT} & = \frac{F(x,y)}{(x-y) H(x,y)} \left(d \bar\tau + \frac{G(x,y)}{F(x,y)} d\bar\phi\right)^2 + \frac{\kappa H(x,y)}{(x-y)^3} \left( \frac{dx^2}{X(x)} - \frac{dy^2}{Y(y)} - \frac{X(x)Y(y)}{\kappa F(x,y)} d\bar\phi^2 \right),
\end{align} 
where the auxiliary angles $(\bar\tau, \bar\phi)$ are related to the canonical $2\pi-$periodic coordinates $(\phi^1, \phi^2)$ by
\begin{equation}
    \bar\tau = \frac{b_1}{k_1} \phi^2 + \frac{b_2}{k_2} \phi^1, \qquad \bar\phi = \frac{\phi^2}{k_1} + \frac{\phi^1}{k_2},
\end{equation} 
in which the explicit expressions for the constants $(b_i,k_i)$ are given in \cite[Eqs. 174-175]{KunduriLucietti} and the metric functions are given in \cite[Eq. 165]{KunduriLucietti}. The coordinates $(y,x)$ parameterize the interior of a rectangle $x_1 < y < x_2 < x < x_3$ where $x_i$ are the roots of a quartic $P(u)$ with $X(x) = P(x), Y(y) = P(y)$. There is a (twisted) AF end, not covered in this coordinate chart, which arises as $x \to x_2^+, y \to x_2^-$. The remaining functions $F(x,y), G(x,y), H(x,y)$ are bivariate polynomials of degrees 6, 8, and 3 respectively. This is a two-parameter family characterized by an overall scale parameter $\kappa > 0$ and a parameter $\xi \in (1/2, 1/\sqrt{2})$. We may pass to the standard $(r,\theta)$ chart by setting 
\begin{equation} x = x_2 - \frac{x_2 \sqrt{(1 - \nu^2)\kappa} \cos^2 \left(\frac{\theta}{2}\right)}{r}, \qquad y = x_2 + \frac{x_2 \sqrt{(1 - \nu^2) \kappa } \sin^2 \left(\frac{\theta}{2}\right)}{r}.
\end{equation}
In terms of these, the asymptotic moduli are 
\begin{equation}
    \ell = \frac{8 \sqrt{\kappa} \xi^4}{\sqrt{1 - 4 \xi^4}(2 \xi^2 - 2 \xi + 1)^2}, \qquad \beta = \frac{(1- \xi)^2\sqrt{1 - 4 \xi^4}}{2\sqrt{\kappa} \xi^2}.
\end{equation} The canonical variables are $(\rho,z)$ are then 
\begin{equation}\begin{aligned}
    \rho &= \left(\frac{b_2 - b_1}{k_1 k_2}\right) \cdot \frac{\sqrt{-X(x)Y(y)}}{(x-y)^2}, \\ z &= \left(\frac{b_2 - b_1}{k_1 k_2}\right)  \cdot \frac{2(a_0 + a_2 x y + a_4 x^2 y^2) + (x+y)(a_1 + a_3 x y)}{2(x-y)^2},
    \end{aligned} 
\end{equation} 
where $a_i$ are constants (see \cite[pg. 28]{KunduriLucietti}). A computation yields the expansions
\begin{equation}
    \begin{aligned}
    e^{2\alpha} &= \frac{1}{\ell^2}\left[ 1 + \frac{(1 + 2\xi^2)\sqrt{\kappa(1 - 4 \xi^4)}}{(1 - 2 \xi^2)r} + O(r^{-2}) \right] , \\ V  &=\log \left(\frac{\rho}{\ell^2}\right) + \frac{\sqrt{\kappa} ( 1+ 2\xi^2)^2}{\sqrt{1 - 4\xi^4} r} + O(r^{-2}).
\end{aligned}
\end{equation} 
Using $\partial_r \log \rho = \frac{1}{r}  + O(r^{-2})$ and the formula \eqref{massAFALF} the mass of the two-parameter family of Chen-Teo instantons is 
\begin{equation} \label{massCT}
\mathrm{mass}_b(M,g_{CT}) = \frac{2\pi \ell (1 + 2\xi^2)^2 \sqrt{\kappa}}{\sqrt{1 - 4 \xi^4}} > 0.
\end{equation}

\subsubsection{Reissner-Nordstr\"{o}m instanton}
This is a two-parameter family of scalar-flat Einstein-Maxwell instantons $(\mathbb{R}^2 \times S^2, g_{RN})$ with smooth metric
\begin{equation}
    g_{RN}  = \ell^2 U(r) (d\phi^2)^2 + \frac{dr^2}{U(r)} + r^2 \left(d\theta^2 + \sin^2\theta(d\phi^1)^2 \right), \qquad U(r):=1 - \frac{2m}{r} + \frac{c_1}{r^2},
\end{equation} 
where $(\phi^1, \phi^2)$ have $2\pi$-period, $r > r_+:= m + \sqrt{m^2 - c_1}$, $\theta \in (0,\pi)$, and regularity requires 
\begin{equation}
   \ell = \frac{r_+^2}{\sqrt{m^2 - c_1}}.
\end{equation} 
Note that $m$ and $c_1$ must satisfy $m^2 >c_1$. In practice, it is convenient to express the solution in terms of $r_+$ and  $c_1$, namely by using
$m = (2r_+)^{-1}(r_+^2 + c_1)$ we have 
\begin{equation}
U(r) = \frac{(r-r_+)(r - c_1 r_+^{-1})}{r^2}, \qquad \ell = \frac{2r_+^3}{r_+^2 - c_1}.
\end{equation}
Here $r_+$ is defined to be the positive root of $U(r)$, and from previous restrictions $r_+^2 > c_1$ (note that $c_1$ can have either sign).  
Furthermore, the canonical coordinates are obtained by setting 
\begin{equation}
   \rho=\ell\sqrt{r^2-2mr+c_1}\sin\theta, \qquad z=\ell\left(r-m\right)\cos\theta,
\end{equation} 
with
\begin{equation}
    \alpha_{RN} = \frac{1}{2} \log \left[\frac{r^2}{\ell^2((r-m)^2 - (m^2 - c_1) \cos^2\theta)}\right], \qquad V_{RN} = \frac{1}{2} \log \left[\frac{\rho^2}{\ell^4 U(r)^2}\right].
\end{equation}
It is straightforward to read off the rod structure associated to the solution. In particular, let $z_2=-z_1:= \ell \sqrt{m^2 - c_1} >0$, then we find
\begin{enumerate}[label=(\roman*)]
    \item a semi-infinite rod $\Gamma_1=(-\infty, z_1]$ with rod structure $\mathbf{v}_1= (1,0)$;
    \item a finite rod $\Gamma_2=[z_1, z_2]$ with rod structure $\mathbf{v}_2=(0,1)$; 
    \item a semi-infinite rod $\Gamma_3=[z_2, \infty)$ with rod structure $\mathbf{v}_3=(1,0)$.
\end{enumerate}
The mass is given by
\begin{equation}
\begin{split}
\mathrm{mass}_b(M,g)=\frac{1}{4\pi}\lim_{r\to\infty}\int_{\mathcal{S}_r}&\left(-\partial_r\left(2\alpha-2\alpha_b+2Z\right)\right.\\
&\left.\quad+\left(2\alpha-2\alpha_b-Z\right)\partial_r\log\rho-\left(V-V_b\right)\partial_rV_b\right)\ell r^2\sin\theta d\theta d\phi^1 d\phi^2 ,
\end{split}
\end{equation} 
where $\alpha_b=-\log \ell$ and $V_b=\log\rho-2\log \ell$. Then since
\begin{equation}
   \partial_r\log\rho=\frac{1}{r}+\frac{m}{r^2}+O(r^{-3}),\quad \alpha_{RN}=\alpha_b +\frac{m}{r}+O(r^{-2}), \qquad V_{RN} = V_b  + \frac{2m}{r}+O(r^{-2}),
\end{equation} 
it follows that
\begin{equation}
\begin{split}
\mathrm{mass}_b(M,g)&=\frac{1}{4\pi }\lim_{r\to\infty}\int_{\mathcal{S}_r}\left(\frac{2m}{r^2}\right)\ell r^2\sin\theta d\theta d\phi^1 d\phi^2=4\pi m\ell.
\end{split}
\end{equation} 
Notice that $\mathrm{mass}_b(M,g)$ can be negative provided that $m < 0, c_1 < 0$. 

\subsubsection{Checking Theorem \ref{main.theorem} for Reissner-Nordstr\"{o}m} 
Here we provide a simple illustration of the main theorem by using the Schwarzschild instanton as the equilibrium geometry for the Reissner-Nordstr\"{o}m manifold. The Schwarzschild instanton is chosen to have the same rod structure as the given Reissner-Nordstr\"{o}m manifold, and note that both have vanishing cone angles on the two semi-infinite rods $\Gamma_1$ and $\Gamma_3$.

\begin{lemma}\label{lemma8.1}
The logarithmic angle defect of the finite rod $\Gamma_2$ for the Reissner-Nordstr\"{o}m manifold and Schwarzschild instanton takes the form
\begin{equation}
   \pmb{\vartheta}^2_{RN}=\frac{1}{2}\log\left
   (\frac{(-z_1+\sqrt{z_1^2+\ell^2c_1})^4}{\ell^4z_1^2}\right),\qquad 
   \pmb{\vartheta}^2_{S}=\frac{1}{2}\log\left
   (16\ell^{-4}z_1
   ^2\right).
\end{equation}
\end{lemma}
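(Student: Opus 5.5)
The plan is to compute $\pmb{\vartheta}^2$ directly from the defining limit \eqref{coneangleaxis}, using the explicit Brill data of the Reissner-Nordstr\"{o}m family listed above, and then to specialize to $c_1=0$ to get the Schwarzschild value.

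On $\Gamma_2$ the rod structure is $\mathbf{v}_2=(0,1)$, so $G_{ij}v_2^iv_2^j=G_{22}=\ell^2U(r)$. The interior of $\Gamma_2$ corresponds to $r\to r_+$ with $\theta\in(0,\pi)$ fixed. The identity $r^2-2mr+c_1=r^2U$ gives $\rho=\ell rU^{1/2}\sin\theta$. Inserting this and the stated formula for $\alpha_{RN}$ into \eqref{coneangleaxis} gives
\begin{equation*}
e^{2\pmb{\vartheta}^2_{RN}}=\lim_{r\to r_+}\frac{\rho^2e^{2\alpha_{RN}}}{\ell^2 U}
=\lim_{r\to r_+}\frac{r^4\sin^2\theta}{\ell^2\big((r-m)^2-(m^2-c_1)\cos^2\theta\big)} .
\end{equation*}
The factor $U$ cancels, so the limit can be evaluated by direct substitution. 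Since $r_+-m=\sqrt{m^2-c_1}$, the denominator becomes $\ell^2(m^2-c_1)\sin^2\theta$. The $\theta$-dependence then drops out, as it must for a cone model along the rod, and we obtain $e^{2\pmb{\vartheta}^2_{RN}}=r_+^4/\big(\ell^2(m^2-c_1)\big)$.

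The remaining step is to rewrite this in terms of $z_1$, $\ell$ and $c_1$. From the rod endpoints we have $z_1^2=\ell^2(m^2-c_1)$, which gives the factor $\ell^{-2}z_1^{-2}$ after dividing by $\ell^2$. We also need $\ell r_+=\ell m+\ell\sqrt{m^2-c_1}=\ell m-z_1$, together with $\ell^2m^2=z_1^2+\ell^2c_1$. On the branch $m\ge0$ this yields $\ell r_+=-z_1+\sqrt{z_1^2+\ell^2c_1}$, and substituting produces exactly the stated $\pmb{\vartheta}^2_{RN}$.

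Two points need care here. First, $\ell$ must be treated as the common asymptotic parameter shared with the comparison geometry, and the regularity relation $\ell=r_+^2/\sqrt{m^2-c_1}$ must not be imposed. Imposing it would force $\pmb{\vartheta}^2_{RN}=0$, so I would state explicitly which parameters are held fixed. Second, the sign of $m$ has to be tracked when taking the square root $\sqrt{z_1^2+\ell^2c_1}=\ell|m|$. If $m<0$, the root enters with the opposite sign, and I would record the modified expression $\ell r_+=-z_1-\sqrt{z_1^2+\ell^2c_1}$ in that case. I expect this sign and parameter bookkeeping to be the main obstacle; the limit computation itself is routine.

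For the Schwarzschild instanton I set $c_1=0$ and write $m=m_S>0$. Then $r_+=2m_S$ and $z_1=-\ell m_S$. The same computation gives $e^{2\pmb{\vartheta}^2_S}=16m_S^4/(\ell^2m_S^2)=16m_S^2\ell^{-2}$. Since $m_S^2=z_1^2\ell^{-2}$, this equals $16\ell^{-4}z_1^2$, which is the stated formula for $\pmb{\vartheta}^2_S$. This is also consistent with the RN formula at $c_1=0$: there $(-z_1+|z_1|)^4=16z_1^4$ because $z_1<0$. Finally, I would remark that both geometries are taken with the same $z_1$ and $\ell$, so that they share the same rod data set as required in Theorem \ref{main.theorem}.
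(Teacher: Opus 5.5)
Your proof is correct, but it follows a somewhat different computational route from the paper's.

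\textbf{How the two routes differ.} The paper rewrites $g_{RN}$ in Weyl form using $R_{z_i}=\sqrt{\rho^2+(z-z_i)^2}$ and applies \eqref{regphi} with $Z=0$ and $\Phi_{22}=G_{RN22}/\rho$. It then expands $R_{z_1}$, $R_{z_2}$, $R_{z_1}R_{z_2}$ and $(R_{z_1}+R_{z_2})^2-2(z_1^2+z_2^2)$ to leading order in $\rho$ at a fixed height $z\in(z_1,z_2)$. Schwarzschild is obtained at the end by setting $c_1=0$.

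You stay in the $(r,\theta)$ chart instead. There $\rho=\ell rU^{1/2}\sin\theta$ makes $U$ cancel exactly, so the limit is a direct substitution. This is legitimate along $\theta=\mathrm{const}$, because the quotient extends continuously to the interior of the bolt. It also yields the transparent intermediate value $e^{2\pmb{\vartheta}^2_{RN}}=r_+^4/(\ell^2(m^2-c_1))$, which visibly equals $1$ under the regularity relation $\ell=r_+^2/\sqrt{m^2-c_1}$. Your route is shorter and avoids expansions. The paper's route stays within the $(\rho,z)$ formalism in which rod data and defects are defined for Theorem \ref{main.theorem}.

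\textbf{Your sign caveat is right.} From $R_{z_1}+R_{z_2}=2\ell(r-m)$, the correct quantity in the Weyl-form expressions is $2\ell m$, but the paper writes $2\sqrt{z_2^2+\ell^2c_1}=2\ell|m|$. So the lemma as stated tacitly assumes $m\geq 0$. For $m<0$ (which requires $c_1<0$, and is exactly the negative-mass regime), the correct numerator is $(-z_1-\sqrt{z_1^2+\ell^2c_1})^4$, as you say. The paper's subsequent formula for $\mathcal{P}(m,c_1)$ is written through $\sqrt{m^2-c_1}+m=r_+$, so it is correct for either sign.

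\textbf{Two minor overstatements.}
\begin{itemize}
\item The formula holds identically in $\ell$, so imposing regularity does no harm; both sides then simply vanish.
\item A logarithmic angle defect need not be constant along a rod in general, since $c=c(t)$ is allowed in \eqref{conem}. Here it is constant because the cone angle at $r=r_+$ is governed by $U'(r_+)$ alone.
\end{itemize}
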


\begin{proof}
The Reissner-Nordstr\"{o}m metric may be expressed as
\begin{equation}
g_{RN}=e^{2\alpha_{RN}}\left(d\rho^2+dz^2\right)+G_{RN11}(d\phi^1)^2+G_{RN22}(d\phi^2)^2,
\end{equation}
where
\begin{equation}
\alpha_{RN}=\frac{1}{2}\log\left(\frac{\left(R_{z_1}+R_{z_2}+2\sqrt{z_2^2+\ell^2c_1}\right)^2}{4\ell^2R_{z_1}R_{z_2}}\right),
\quad\quad R_{z_i}:=\sqrt{\rho^2+(z-z_i)^2},
\end{equation}
\begin{equation}
G_{RN22}=\frac{\ell^2\left(R_{z_1}+R_{z_2}\right)^2-2\ell^2(z_2^2+z_1^2)}{(R_{z_1}+R_{z_2}+2\sqrt{z_2^2+\ell^2c_1})^2}=\rho\Phi_{RN22},
\end{equation}
\begin{equation}
G_{RN11}=\rho^2\frac{\left(R_{z_1}+R_{z_2}+2\sqrt{z_2^2+\ell^2c_1}\right)^2}{\ell^2\left(R_{z_1}+R_{z_2}\right)^2-2\ell^2(z_2^2+z_1^2)}=\rho\Phi_{RN11}.
\end{equation}
By \eqref{regphi} the desired angle defect may be obtained from the quantity
\begin{equation}\label{RNregularity2}
\begin{split}
   \alpha_{RN}+\frac{1}{2}\log\rho-\frac{1}{2}\log(\Phi_{RN22})=\frac{1}{2}\log\left(\frac{\rho^2(R_{z_1}+R_{z_2}+2\sqrt{z_2^2+\ell^2c_1})^4}{4\ell^4R_{z_1}R_{z_2}\left(\left(R_{z_1}+R_{z_2}\right)^2-2(z_2^2+z_1^2)\right)}\right).
\end{split} 
\end{equation}
Observe that near the interior of $\Gamma_2$ we have
\begin{equation}
    R_{z_1}=\sqrt{\rho^2+(z-z_1)^2}=(z-z_1)+\frac{1}{2(z-z_1)}\rho^2+O(\rho^4),
\end{equation}
\begin{equation}
    R_{z_2}=\sqrt{\rho^2+(z-z_2)^2}=-(z-z_2)-\frac{1}{2(z-z_2)}\rho^2+O(\rho^4),
\end{equation} 
and hence
\begin{equation}
    \left(R_{z_1}+R_{z_2}+2\sqrt{z_2^2+L^2c_1}\right)^4=\left(z_2-z_1+2\sqrt{z_2^2+L^2c_1}\right)^4+O(\rho^2),
\end{equation}
\begin{equation}
    \begin{split}
        (R_{z_1}+R_{z_2})^2-2(z_2^2+z_1^2)=\frac{(z_2-z_1)^2}{(z_2-z)(z-z_1)}\rho^2+O(\rho^4),\qquad \text{since}\quad z_2=-z_1,
    \end{split}
\end{equation}
\begin{equation}
    R_{z_1}R_{z_2}=-(z-z_1)(z-z_2)+O(\rho^2).
\end{equation}
It follows that
\begin{equation}\label{RNregularity3}
\begin{split}
\pmb{\vartheta}^2_{RN}=\frac{1}{2}\log\left
   (\frac{(z_2-z_1+2\sqrt{z_2^2+\ell^2c_1})^4}{4\ell^4(z_2-z_1)^2}\right)
   =\frac{1}{2}\log\left
   (\frac{(-z_1+\sqrt{z_1^2+\ell^2c_1})^4}{\ell^4z_1^2}\right).
\end{split} 
\end{equation}
Moreover, setting $c_1=0$ yields the corresponding formula for the Schwarzschild instanton.
\end{proof}

To compare the Reissner-Nordstr\"om and Schwarzschild families of instanton, and verify Theorem \ref{main.theorem}, we should have both with the same rod length for $\Gamma_2$. To achieve this, the mass parameter of Schwarzschild instanton must be chosen to be $\sqrt{m^2-c_1}$, where $m$ and $c_1$ are Reissner-Nordstr\"{o}m parameters. In this setting define the following quantity
\begin{equation}
\begin{split}
\mathcal{P}(m,c_1):=\mathrm{mass}_b\,(M,g_{RN})- \mathrm{mass}_b\,(M,{g}_{S})-2\pi\sum_{n=1}^{N+1}\int_{\Gamma_n}(\pmb{\vartheta}^n -\pmb{\vartheta}^n_S) dz.
\end{split}
\end{equation}

\begin{lemma}
For all $m, c_1\in\mathbb{R}$ with $m^2 > c_1$ it holds that $\mathcal{P}(m,c_1)\geq 0$ . Moreover, equality holds if and only if $c_1=0$, in which case the Reissner-Nordstr\"{o}m manifold reduces to the Schwarzschild instanton.
\end{lemma}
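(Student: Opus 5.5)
The plan is to evaluate $\mathcal{P}(m,c_1)$ in closed form and reduce its sign to an elementary one-variable inequality. Set $s:=\sqrt{m^2-c_1}>0$. The comparison Schwarzschild instanton $g_S$ has mass parameter $s$ but the same asymptotic length $\ell=r_+^2/s$ as $g_{RN}$. Regularity would force Schwarzschild to have $\ell=4s$, so $g_S$ will in general carry a conical singularity on $\Gamma_2$. Both geometries have rod points $z_2=-z_1=\ell s$, so $\Gamma_2$ has length $2\ell s$ in each.

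\emph{Step 1: the mass difference.} By the computation following \eqref{massKerr}, which covers both the Reissner--Nordstr\"{o}m and Schwarzschild ($c_1=0$) cases through \eqref{massAFALF} with the same model $b$,
\[
\mathrm{mass}_b(M,g_{RN})-\mathrm{mass}_b(M,g_S)=4\pi\ell\,(m-s).
\]

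\emph{Step 2: the angle defects.} There is no contribution from $\Gamma_1,\Gamma_3$, since neither metric has a defect there. On $\Gamma_2$ I would use Lemma \ref{lemma8.1} with $z_1=-\ell s$, so that $\sqrt{z_1^2+\ell^2c_1}=\ell\sqrt{m^2}$. The quantity $-z_1+\sqrt{z_1^2+\ell^2c_1}$ should be read as $\ell r_+=\ell(m+s)$; this is the root appearing in the derivation, and the condition $r_+>0$ ensures $m+s>0$. The main point to check carefully is precisely this sign/branch issue when $m<0$, by revisiting the expansion in the proof of Lemma \ref{lemma8.1}. With this reading,
\[
\pmb{\vartheta}^2_{RN}=2\log r_+-\log s-\log\ell=0,
\]
consistent with $g_{RN}$ being smooth when $\ell=r_+^2/s$. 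For the instanton, $\pmb{\vartheta}^2_S=\log(4s/\ell)=\log(4s^2/r_+^2)$, which is constant along $\Gamma_2$. Therefore
\[
2\pi\int_{\Gamma_2}(\pmb{\vartheta}^2_{RN}-\pmb{\vartheta}^2_S)\,dz=-4\pi\ell s\log\!\left(\frac{4s^2}{r_+^2}\right).
\]

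\emph{Step 3: the inequality.} Combining Steps 1 and 2 gives
\[
\mathcal{P}=4\pi\ell\left(m-s+2s\log\frac{2s}{r_+}\right).
\]
Introduce $x:=2s/r_+>0$, so that $m=2s/x-s$ and $m-s=2s(1/x-1)$. Then
\[
\mathcal{P}=8\pi\ell s\left(\frac1x-1+\log x\right)\ge 0,
\]
by the elementary inequality $\log x\ge 1-1/x$. Equality holds exactly when $x=1$, that is when $r_+=2s$, which means $m=s$ and hence $c_1=0$. In that case $g_{RN}$ coincides with the smooth Schwarzschild instanton, since then $\ell=4s$. This matches the rigidity in Theorem \ref{main.theorem}.
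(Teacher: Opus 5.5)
Your proposal is correct and follows the paper's proof essentially verbatim. With $s=\sqrt{m^2-c_1}$ you reach the same expression $\mathcal{P}=4\pi\ell\bigl(m-s+2s\log\tfrac{2s}{s+m}\bigr)$ and close with the same elementary inequality: your $x=2s/r_+$ with $\log x\ge 1-1/x$ is the exponential form of the paper's $x=\log\tfrac{2s}{s+m}$ with $1-e^x+xe^x\ge 0$. Your reading of $\sqrt{z_1^2+\ell^2c_1}$ as $\ell m$ rather than $\ell|m|$, which requires $r_+=m+s>0$ and not merely $m^2>c_1$, is used only implicitly in the paper; and your observation that $\pmb{\vartheta}^2_{RN}=0$ by smoothness of $g_{RN}$ avoids that branch issue altogether.
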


\begin{proof}
Recall that $\mathrm{mass}_b\,(M,g_{RN})=4\pi m\ell$ and $\mathrm{mass}_b\,(M,g_S)=4\pi\ell\sqrt{m^2-c_1}$, and $z_2=-z_1=\ell\sqrt{m^2-c_1}$. Acording to Lemma \ref{lemma8.1} it follows that
\begin{equation}\label{RNSinequality1}
\begin{split}
(4\pi\ell)^{-1}\mathcal{P}(m,c_1)
=m-\sqrt{m^2-c_1}+2\sqrt{m^2-c_1}\log\left(\frac{2\sqrt{m^2-c_1}}{\sqrt{m^2-c_1}+m}\right).
\end{split}
\end{equation}
Define 
\begin{equation}
x\equiv\log\left(\frac{2\sqrt{m^2-c_1}}{\sqrt{m^2-c_1}+m} \right),
\end{equation}
Then we have 
\begin{equation}
 \begin{split}
        \mathcal{P}(m,c_1)&=8\pi \ell e^{-x}\sqrt{m^2-c_1}\left(1-e^x+xe^x\right).
    \end{split}
\end{equation}
Clearly, $\mathcal{P}\geq 0$ since $1-e^x+xe^x$ is decreasing for $x<0$ and increasing for $x>0$ and zero at $x=0$. Moreover, $x=0$ is equivalent to $c_1=0$.
\end{proof}

\subsection{ALF manifolds}
We present here three explicit families of ALF-$A_{k-1}$ geometries with $k=1$ so that the boundary at infinity is $S^3$; the first two are Ricci flat while the third is scalar flat. The mass of each will be computed along with other relevant quantities associated with the main theorem.

\subsubsection{Taub-NUT instanton}
The (Ricci-flat) Taub-NUT space $(\mathbb{R}^4, g_{TN})$ is a complete, ALF-$A_0$ gravitational instanton. In local coordaintes the metric is given by
\begin{equation}\label{TaubNUTmet}
\begin{aligned}
g_{TN} &= H^{-1}\ell^2 \left(d\phi^2 + \cos^2\left(\frac{\theta}{2}\right) d \phi^1 \right)^2 + H (d r^2 + r^2 d \theta^2 + r^2 \sin^2\theta (d\phi^1)^2) , \\
 H & = 1 + \frac{\ell}{2r} ,
 \end{aligned}
\end{equation} 
where $(\phi^1, \phi^2)$ are independently $2\pi-$periodic angles, $r > 0$, and $\theta \in (0,\pi)$. This is a one-parameter family of metrics parametrized by the radius $\ell$ of $S^1$ at infinity.  The associated $2\pi-$periodic generators are $\partial_{\phi^1}$ and $\partial_{\phi^2}$. We then may select
\begin{equation}
\rho = \ell r \sin \theta, \qquad z = \ell r \cos\theta,
\end{equation} 
from which one finds
\begin{equation}
    \alpha_{TN} = \frac{1}{2} \log \left( \frac{H}{\ell^2}\right).
\end{equation}
The rod structure of Taub-NUT consists of two semi-infinite rods $\Gamma_1=(-\infty,0]$ and $\Gamma_2=[0,\infty)$, with rod structures $\mathbf{v}_1=(1,0)$ and $\mathbf{v}_2 =(1,-1)$.
The asymptotic boundary is topologically $L(1,1) = S^3$ with one direction (the $S^1$ fibre) having bounded size, while the $S^2$ base grows to infinite size. Notice that $r=0$ is a corner point where both torus generators degenerate. As is well known, although the local metric has a coordinate singularity at $r =0$, this point may be included so that $g_{TN}$ extends to a smooth metric on $\mathbb{R}^4$. 
The relevant harmonic map is given by
\begin{equation}
\Phi = \frac{1}{\ell r \sin\theta} \begin{pmatrix} 
\frac{\ell^2}{H} \cos^4\left(\frac{\theta}{2}\right) + H r^2 \sin^2\theta & \frac{\ell^2}{H} \cos^2\left(\frac{\theta}{2}\right) \\  \frac{\ell^2}{H} \cos^2\left(\frac{\theta}{2}\right) & 
\frac{\ell^2}{H}
\end{pmatrix},
\end{equation} 
and
\begin{equation}
    V - V_b = \frac{\ell}{2r} + O(r^{-2}), \qquad \alpha - \alpha_b = \frac{\ell}{4r} + O(r^{-2}).
\end{equation} 
Using the formula \eqref{massAFALF}, it follows that
\begin{equation}
    \mathrm{mass}_b(M,g_{TN}) = \pi \ell^2.
\end{equation}

\subsubsection{Taub-Bolt instanton}
The one-parameter family of Euclidean Taub-Bolt gravitational instantons are Ricci-flat and ALF, with $S^3$ asymptotic boundary. In local coordinates the metric takes the form
\begin{equation}
\begin{aligned}
 g_{TB} &=  U(r) \ell^2\left(d \phi^2 + \cos^2\left(\frac{\theta}{2}\right) d \phi^1\right)^2 +  \frac{d r^2}{U(r)} + \left(r^2 - \frac{\ell^2}{16} \right) (d\theta^2 + \sin^2\theta (d \phi^1)^2), \\
 U(r) & = \frac{(2r - \ell)(8r  -\ell)}{16r^2 -\ell^2},
\end{aligned}
\end{equation} 
 where $r \in (\ell/2, \infty), \theta \in (0,\pi)$, and $(\phi^1,\phi^2)$ are independently $2\pi-$periodic angles. The metric is parameterized by $\ell>0$ which characterizes the radius of the $S^1$ in the asymptotic region.  The rod data set consists of:
\begin{enumerate}[label=(\roman*)] 
\item a semi-infinite rod characterized by $r > \ell/2$ and $\theta = \pi$, with rod structure $\mathbf{v}_1=(1,0)$, 
\item a finite rod characterized by $r = \ell/2$ and $\theta \in (0,\pi)$, with rod structure $\mathbf{v}_2=(0,1)$, 
\item a semi-infinite rod characterized by $r > \ell/2$ and $\theta =0$, with rod structure $\mathbf{v}_3=(1,-1)$. 
\end{enumerate}
The topology of Taub-Bolt is thereforfe $M=\mathbb{CP}^2 \setminus \{\mathrm{point}\}$.
Moreover, it has the same asymptotic behavior as the Taub-NUT instanton discussed above. Hence, it may be viewed as Taub-NUT with an additional finite bolt rod corresponding to a two-sphere. 
Canonical coordinates are obtained by defining
\begin{equation}
    \rho = \frac{\ell}{4} \sqrt{(2r - \ell)(8r - \ell)} \sin \theta, \qquad z = \ell\left( r- \frac{5\ell}{16} \right) \cos\theta.
\end{equation}
It follows that
\begin{equation}
    \alpha_{TB} = \frac{1}{2} \log\left( \frac{16 (16 r^2 - \ell^2)}{\ell^2\left[ (16 r - 5\ell)^2 - 9\ell^2 \cos^2\theta \right]}\right),
\end{equation} 
and the associated harmonic map matrix is given by
\begin{equation}
    \Phi = \rho^{-1} \begin{pmatrix} U(r) \ell^2 \cos^4 \left(\frac{\theta}{2}\right) + \left(r^2 - \frac{\ell^2}{16}\right) \sin^2\theta & U(r) \ell^2 \cos^2 \left(\frac{\theta}{2}\right) \\
    U(r) \ell^2 \cos^2 \left(\frac{\theta}{2}\right) & U(r) \ell^2 \end{pmatrix}.
\end{equation} 
To compute the mass, note that the appropriate asymptotic model space is Taub-NUT with the same $\ell$. We find that
\begin{equation}
    \alpha_{TB} - \alpha_b = \frac{5\ell}{16 r} + O(r^{-2}), \qquad V_{TB} - V_b = \frac{5\ell}{8r} + O(r^{-2}).
\end{equation}  
Using formula \eqref{massAFALF} then yields
\begin{equation}
    \mathrm{mass}_b(M,g_{TB}) = \frac{5 \pi \ell^2}{4}.
\end{equation}

\subsubsection{Charged Taub-Bolt instanton}\label{CTaub-bolt} 
The following two-paramter familiy of complete, ALF scalar-flat metrics can be obtained by a suitable analytic continuation of a local family of Lorentzian metrics that satisfy the Einstein-Maxwell equations. It can be thought of as a one-parameter `charged' generalization of the Ricci-flat Taub-Bolt solution in the same way Reisner-Nordstr\"{o}m contains the Schwarzschild instanton. In the standard coordinate system, the metric takes the form
\begin{equation}
    g_{CTB} = \frac{\ell^2 F(r)}{r^2 - \frac{\ell^2}{16}} \left(d \phi^2 + \cos^2 \left(\frac{\theta}{2}\right) d \phi^1 \right)^2 + \left(r^2 - \frac{\ell^2}{16}\right) \left( \frac{d r^2}{F(r)} + d \theta^2 + \sin^2\theta (d \phi^1)^2\right),
\end{equation} 
where
\begin{equation}
    F(r)= (r - r_+)\left( r - 
    \left(r_+ + \frac{\ell}{8} - \frac{2r_+^2}{\ell}\right)\right).
\end{equation} 
The solution is parameterized by the positive parameters $(r_+, \ell)$ with $r_+ > \ell/4$, where the coordinate ranges are $r > r_+$, $\theta \in (0,\pi)$, and $(\phi^1, \phi^2)$ are independently $2\pi-$periodic angles. The apparent singularity of the metric as $r \to r_+$ can be smoothly resolved by adding in a 2-sphere bolt at $r = r_+$.  Observe that $r_+$ is the largest root of $F(r)$ because $r_+ > r_+ + \ell/8 -2r_+^2/\ell$. It is straighgtforward to verify that the asymptotic geometry as $r \to \infty$ is ALF with asymptotic boundary $S^3$. 

We note that the Taub-NUT and Taub-Bolt metrics can be recovered by setting $r_+ = \ell/4$ and $r_+ = \ell/2$ respectively (in the former case, the radial coordinate $r$ must be shifted in order to recover the explicit metric $g_{TN}$ in \eqref{TaubNUTmet}). Moreover, observe that the function $\rho = \sqrt{\det G}$ is harmonic on the 2-dimensional orbit space (in this case, parameterized by $(r,\theta)$), and from this one may find the harmonic conjugate $z$ to produce canonical coordiantes 
\begin{equation}
    \rho = \sqrt{F} \ell \sin\theta, \qquad z = \left(r - \frac{\ell}{16} - r_+ + \frac{r_+^2}{\ell}\right) \ell \cos\theta. 
\end{equation}
There are three rods with rod structures $(1,0)$, $(0,1)$, and $(1,-1)$. Moreover, a computation shows that
\begin{equation}
\alpha_{CTB}=\frac{1}{2} \log \left[ \left(r^2 - \frac{\ell^2}{16}\right)\frac{256}{P^2-(\ell^2\cos\theta-16r_+^2 \cos
\theta )^2} \right]
\end{equation}
where $P=\ell^2-16r\ell+16r_+\ell-16r_+^2$, and
\begin{equation}
    V_{CTB}=\frac{1}{2}\log\left(\frac{\left(r^2 - \frac{\ell^2}{16}\right)^2\sin^2\theta+\ell^2F(r)\cos^4\frac{\theta}{2}}{\ell^2F(r)}\right),
\end{equation}  
as well as
\begin{equation}
    \alpha_{CTB}- \alpha_b = \frac{c}{r} + O((r^{-2}), \qquad V_{CTB} - V_b = \frac{2c}{r} + O((r^{-2}), \qquad c = r_+ + \frac{\ell}{16} - \frac{r_+^2}{\ell}.
\end{equation} 
Using the formula \eqref{massAFALF}, it follows that
\begin{equation}\label{massCTB}
\mathrm{mass}_b(M,g_{CTB})= 4\pi \ell c.
\end{equation} 
We point out that the mass is negative whenever $r_+>\frac{2+\sqrt{5}}{4}\,\ell$. Thus, this family provides a continuous family of smooth, complete, scalar-flat ALF manifolds with negative mass.

\subsection{ALE manifolds}
\subsubsection{Eguchi-Hanson instanton} 
This is a one-parameter family of hyperk\"ahler metrics on $M= T^* S^2$ with metric given by 
\begin{equation} \begin{aligned} 
g_{EH} &= \frac{dr^2}{f(r)} + r^2 \left(d\theta^2 + \frac{f(r)}{4}\left( d\phi^1 + 2 \cos^2\theta d\phi^2\right)^2 + \frac{\sin^2 2\theta}{4}  (d\phi^2)^2 \right) \\
& = \frac{dr^2}{f(r)} + r^2 \left( d\theta^2  + \frac{\sin^2 \theta (d\phi^1)^2}{4} + \cos^2\theta \left(d\phi^2 + \frac{d\phi^1}{2}\right)^2 - \frac{a^4}{4r^4} \left( d\phi^1 + 2 \cos^2\theta d\phi^2\right)^2 \right),
\end{aligned}
\end{equation} 
where $f(r) = 1 - a^4/r^4$ and $a > 0$; here $r>a$, $\theta\in(0,\frac{\pi}{2})$, and $(\phi^1,\phi^2)$ are independently $2\pi$-periodic. The second form of the metric exhibits clearly that the Eguchi-Hanson instanton is ALE with boundary $L(2,1)$ at infinity.  The harmonic map is 
\begin{equation}
\Phi =   \rho^{-1} \begin{pmatrix} \frac{r^2 f(r)}{4} & \frac{r^2 f(r)}{2} \cos^2 \theta \\ \frac{r^2 f(r)}{2} \cos^2 \theta &r^2 \cos^2 \theta \left(f(r)\cos^2\theta + \sin^2\theta\right) \end{pmatrix}
\end{equation} 
where the canonical coordinates are
\begin{equation}
\rho = \frac{1}{4} \sqrt{r^4 - a^4} \sin 2\theta, \qquad z = \frac{r^2}{4} \cos 2\theta.
\end{equation} 
It follows that
\begin{equation}
\alpha_{EH} = \frac{1}{2} \log \left(\frac{4r^2}{r^4 - a^4 \cos^2 2\theta}\right),
\end{equation} 
and after a computation
\begin{equation}
     V_{EH} = \frac{1}{2} \log \left(\frac{f(r)}{4 \cos^2\theta (f(r) \cos^2\theta + \sin^2\theta)} \right), \qquad W_{EH} = \text{arcsinh} \left(\frac{r^2 f(r)}{2\rho} \cos^2 \theta\right).
\end{equation} 
The rod data set consists of:
\begin{enumerate}[label=(\roman*)]
    \item a semi-infinite rod $\Gamma_1=(-\infty, -\frac{a^2}{4}]$ with rod structure $\mathbf{v}_1= (0,1)$;
    \item a finite rod $\Gamma_2=[-\frac{a^2}{4}, \frac{a^2}{4}]$ with rod structure $\mathbf{v}_2=(1,0)$; 
    \item a semi-infinite rod $\Gamma_3=[\frac{a^2}{4}, \infty)$ with rod structure $\mathbf{v}_3=(2,-1)$.
\end{enumerate}
To compute the mass observe that
\begin{equation}
\begin{aligned}
     \alpha_{EH} - \alpha_b &= O(r^{-4}), \\
V_{EH} &= -\log\left(2\cos\theta\right) - \frac{a^4 \sin^2\theta}{2r^4} + O(r^{-8}), \\
W_{EH} & = \log \left(\cot\theta + \csc\theta\right) - \frac{a^4 \cos\theta}{2 r^4} + O(r^{-8}),
\end{aligned}
 \end{equation} and hence from \eqref{massAEALE} we confirm that the Eguchi-Hanson instanton has vanishing mass. 
 
\appendix

\section{Ricci and Scalar Curvature Computations}
\label{Ricc}

Consider a Riemannian manifold $(M,g)$ of dimension $D\geq 4$ whose metric takes the following Brill form \eqref{m1} in local coordinates 
\begin{equation}\label{global}
g = \hat{g}_{ab} d x^a d x^b + G_{ij} (d \phi^i + A^i_a d x^a)(d \phi^j + A^j_b d x^b),
\end{equation} 
where $\partial_{\phi^i}$, $i=1,\ldots, D-2$ are Killing vector fields generating the $T^{D-2}$ isometry group, and $(x^1,x^2)$ are coordinates on the space transverse to the torus action.
The principal orbits of the torus action are $(D-2)$-dimensional, and \eqref{global} describes a class of cohomogeneity-two metrics. All functions appearing in the metric depend only on $(x^1,x^2)$, and the inverse metric coefficients are given by 
\begin{equation}
\begin{split}
g^{ab}=\hat{g}^{ab},\quad\quad g^{ij}=G^{ij}+\hat{g}^{ab}A^i_aA^j_b,\quad\quad g^{ia}=-\hat{g}^{ab}A^i_b .
\end{split}
\end{equation}
The components of the Ricci tensor are (see \cite[Appendix A]{Harmark} or \cite[(4.6)]{Lott}):
\begin{equation}
    \begin{split}
        R_{ij}&=-\frac{1}{2}\hat{\nabla}_a\hat{\nabla}^aG_{ij}-\frac{1}{4}\partial_a\left(\log\det G+\log\det \hat{g}\right)\partial^aG_{ij},\\
        &+\frac{1}{2}\partial^aG_{ik}G^{kl}\partial_bG_{lj}+\frac{1}{4}G_{ik}G_{jl}\hat{g}^{ac}\hat{g}^{bd}F_{ab}^kF_{cd}^l\\
        R_{ia}&=R_{ij}A^j_a+\frac{1}{2\sqrt{\det G\det\hat{g}}}\hat{g}_{ab}\partial_c\left(\sqrt{\det G\det\hat{g}}G_{ij}\hat{g}^{bd}\hat{g}^{ce}F^j_{de}\right),\\
        R_{ab}&=-R_{ij}A^i_aA^j_b+R_{ia}A^i_b+R_{ib}A^i_{a}-\frac{1}{2}\hat{g}^{cd}G_{ij}F^i_{ac}F^{j}_{bd}+\hat{R}_{ab}\\
        &-\frac{1}{2}\hat{\nabla}_a\hat{\nabla}_{b}\log\det G-\frac{1}{4}\mathrm{Tr}\left(G^{-1}\partial_aGG^{-1}\partial_bG\right),
    \end{split}
\end{equation}
where $F_{ab}^i=\partial_aA_b^i-\partial_bA_a^i$ and $\hat{\nabla}$ is the Levi-Civita connection with respect to $\hat{g}$. The scalar curvature then becomes
\begin{equation} \begin{aligned}
R 
&=\hat{R} - \hat{g}^{ab} G^{ij} \hat\nabla_a \hat\nabla_b G_{ij}  + \frac{3}{4} \hat{g}^{ab} G^{ij} \hat\nabla_a G_{jk} G^{kl} \hat \nabla_b G_{li} - \frac{1}{4} \hat{g}^{ab} G^{ij} \hat\nabla_a G_{ij} G^{kl} \hat\nabla_b G_{kl} \\ 
&\quad -\frac{1}{4} \hat{g}^{ac} \hat{g}^{bd} G_{ij} F^i_{ab} F^j_{cd}.
\end{aligned}
\end{equation} 

For simplicity, we will assume in what follows that $D=4$ and that $(M,g)$ satisfies the hypotheses of Theorem \ref{main.theorem}. So far we have not chosen a specific coordinate system on the two-dimensional orbit space. As shown in \cite{JaraczKhuri}, there is a natural set of coordinates $(x^1,x^2) = (\rho,z)$ so that the orbit space $M/T^{2}$ is parameterized by the half-plane $\{(\rho,z)\mid \rho\geq 0, z\in\mathbb{R}\}$.
Moreover, the $z$-axis parameterizing the boundary of the orbit space consists of axes $\Gamma_n$ and corner points $z_n$, where $G$ has rank 1 and 0 respectively, and in these coordinates the orbit space metric is conformally flat 
\begin{equation}\label{conformal}
\hat{g}=e^{2\alpha}\left(d\rho^2+d z^2\right),
\end{equation}
while the scalar curvature becomes
\begin{equation}\label{scalar2} \begin{aligned}
 e^{2\alpha} R &=-2\Delta_2\alpha - \left(\frac{\Delta_2\det G}{\det G}+\frac{1}{4}\mathrm{Tr}\left(G^{-1}\nabla G\right)^2-\frac{3}{4}|\nabla\log\det G|^2\right)\\
&\quad -\frac{1}{4}e^{-2\alpha} \delta_2^{ac} \delta_2^{bd} G_{ij} F^i_{ab} F^j_{cd},
\end{aligned}
\end{equation} 
where the gradient $\nabla$ and Laplacian $\Delta_2$ are with respect to the flat metric $\delta_2 = d \rho^2 + d z^2$. 
Furthermore, the condition $\text{Ric}(g)=0$ applied to \eqref{global} is equivalent to the following system on the orbit space
\begin{equation}\label{ricciflat}
    \begin{split}
      &\partial_a\left(\sqrt{\det G\det\hat{g}}\,\hat{g}^{ab}G^{ik}\partial_bG_{kj}\right)=\frac{1}{2}\sqrt{\det G\det\hat{g}}\,F^i_{ab}G_{jk}\hat{g}^{ac}\hat{g}^{bd}F^k_{cd},\\
      &\partial_a\left(\sqrt{\det G\det\hat{g}}\,G_{ij}\hat{g}^{ac}\hat{g}^{bd}F_{cd}^j\right)=0,\\
      &\hat{R}_{ab}=\frac{1}{4}\mathrm{Tr}\left(G^{-1}\partial_aG G^{-1}\partial_b G\right)+\frac{1}{2}\hat{\nabla}_a\hat{\nabla_b}\log\det G+\frac{1}{2}\hat{g}^{cd}G_{ij}F^i_{ac}F^{j}_{bd} .
    \end{split}
\end{equation} 

\begin{prop}\label{propA0}
If $(M,g)$ satisfies the hypotheses of Theorem \ref{main.theorem} and is Ricci-flat, then the two-plane distribution orthogonal to the torus generators is integrable. In particular, there exist adapted coordinates in which $A_a^i=0$.
\end{prop}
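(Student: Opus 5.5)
This is the classical twist-potential argument for toric Ricci flat metrics (cf. Harmark, Hollands--Yazadjiev). The plan is to use the second equation of \eqref{ricciflat}, namely
\[
\partial_a\bigl(\sqrt{\det G\det\hat g}\,G_{ij}\hat g^{ac}\hat g^{bd}F^j_{cd}\bigr)=0 ,
\]
to show that each ``twist'' of the torus action is constant. From that, the boundary behaviour on the axes will force the constants to vanish.

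First, since the orbit space is two dimensional, $F^j$ is a multiple of the area form, say $F^j_{cd}=f^j\,\epsilon_{cd}$ with $\epsilon$ the $\hat g$-area form. Then $\hat g^{ac}\hat g^{bd}F^j_{cd}=f^j\epsilon^{ab}$, and the equation becomes
\[
\partial_a\bigl(\sqrt{\det G\det\hat g}\,G_{ij}f^j\epsilon^{ab}\bigr)=0 .
\]
As $\sqrt{\det\hat g}\,\epsilon^{ab}$ is the constant Levi-Civita symbol, this says that $c_i:=\sqrt{\det G}\,G_{ij}f^j$ has vanishing gradient on the connected interior of the half-plane, so each $c_i$ is a constant. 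Equivalently, the twist $1$-forms $\omega_i=\star(\eta_1\wedge\eta_2\wedge d\eta_i)$ are closed. On a simply connected $M$ they are then exact, $\omega_i=d\chi_i$, and $c_i$ is identified, up to a universal factor, with the Killing-invariant function $\star(\eta_1\wedge\eta_2\wedge d\eta_i)$.

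Next I will show that $c_i=0$. The function $\eta_1\wedge\eta_2\wedge d\eta_i$ contracted with the volume form is smooth on $M$ away from conical loci. It vanishes at every point where $\eta_1\wedge\eta_2=0$, that is, on the axes $\pi^{-1}(\Gamma)$, which are nonempty. Since $c_i$ is constant, it must be zero.

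At the level of Brill coefficients, the same conclusion comes out as follows. Near an interior axis point of a rod with $\mathbf v_n=(1,0)$, Section~\ref{sec7.axis} gives $\det G=O(\rho^2)$, $G_{ij}=O(1)$, $A^1_a=O_1(\rho^{\zeta})$ and $A^2_a=O_1(\rho^{1+\zeta})$. Hence $F^j=O(\rho^{\zeta-1})$, so that
\[
c_i=O\bigl(\rho\cdot\rho^{\zeta-1}\bigr)=O(\rho^{\zeta})\to0 .
\]
The other rod types reduce to this one by an $SL(2,\mathbb Z)$ change of generators. This is the step I expect to need the most care, because of conical singularities. The limit must be taken in the interior of a rod, where the $O_1(\rho^{1+\zeta})$ control of \eqref{ccc1} is available, rather than at a corner, where the $A^i_a$ blow up like $1/\sin\theta$.

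Finally, $c_i=0$ together with $\det G>0$ off the axes gives $f^j=0$, so $dA^j=0$ on the simply connected interior of the half-plane. Write $A^j=d\lambda^j$ there, and change coordinates by $\phi^j\mapsto\phi^j+\lambda^j(\rho,z)$. This keeps $\partial_{\phi^j}$ as the Killing generators and kills the cross terms, so $A^i_a=0$. This is exactly the statement that the distribution orthogonal to the orbits is integrable, which proves Proposition~\ref{propA0}.
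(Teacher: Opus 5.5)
Your argument is correct, but its decisive step differs from the paper's.

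The paper works invariantly. It obtains $d\omega_i=0$ from the Killing identity $d\star d\eta_i=-2\star\mathrm{Ric}(\eta_i)$ and Cartan's formula. It then kills the constant by asserting that the twist tends to its vanishing model value at infinity, and applies Frobenius to get local adapted coordinates on the regular set.

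You get the same constancy from the $R_{ia}$ equation in \eqref{ricciflat}; your $c_i=\sqrt{\det G}\,G_{ij}f^j$ is exactly $\pm\omega_i$. You then kill the constant at the axis instead, where the factor $\sqrt{\det G}=O(\rho)$ beats the $O(\rho^{\zeta-1})$ blow-up of $F^j$ allowed by the cone asymptotics of Section~\ref{sec7.axis}.

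Your choice of boundary is the more economical one, since it needs control at only a single interior rod point. The route through infinity has to beat the growth of $|\eta_1\wedge\eta_2|$ (of order $r$ in the ALF/AF cases and $r^2$ in the ALE case) with the assumed fall-off of $d\eta_i$, and this is not immediate at the stated rates. In exchange, the paper's version is coordinate free and does not need the reduced equations.

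You also produce a single gauge $\phi^j\mapsto\phi^j+\lambda^j$ on the whole open half-plane, rather than only local adapted coordinates.

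One slip to remove: for a $T^2$ action in dimension four, $\star(\eta_1\wedge\eta_2\wedge d\eta_i)$ is a function, not a $1$-form. Closedness therefore already means constancy, and there is no potential $\chi_i$ to introduce. This plays no role in the rest of your proof.
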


\begin{proof}
Let $\eta_i = \partial_{\phi^i}$, $i=1,2$ denote the set of mutually commuting Killing fields generating the torus symmetry, so that $\mathcal{L}_{\eta_i} g =0$ and $[\eta_i, \eta_j] =0$.  Define the smooth functions
\begin{equation}
\omega_i:= \star (\eta_1 \wedge \eta_2 \wedge \wedge \td \eta_i),
\end{equation} 
where for convenience we have used the same symbol $\eta_i$ to denote the metric dual one-forms $g(\eta_i, \cdot)$.
Using the Killing field identity $d\star d\eta_i = -2\star \text{Ric}(\eta_i)$ together with Cartan's magic formula
yields $d\omega_i =0$. Since $M$ is connected, and the toric asymptotics imply that each twist function converges to its vanishing model counterpart at infinity, we find that $\omega_i \equiv 0$, $i=1,2$. The vanishing of the twist functions implies, by Frobenius' theorem, that the two-plane distribution orthogonal to the torus orbits is integrable. Hence, on the regular set, one may choose local adapted coordinates (without changing notation) such that the vectors $\partial_{x^a}$ are tangent to the orthogonal leaves and the vectors $\partial_{\phi^i}$ generate the torus action. It follows that
$g(\partial_{x^a},\partial_{\phi^i})=0$, and hence $A^i_a=0$ in these coordinates.
\end{proof}

In the Ricci-flat case, this proposition implies that the second equation of \eqref{ricciflat} is automatically satisfied, and that by taking the trace of the first equation, $\sqrt{\det G}$ is harmonic with respect to $\hat{g}$. 
We may then identify the coordinate $\rho$ with $\sqrt{\det G}$. 

\begin{prop}\label{detG=rho}
If $(M,g)$ satisfies the hypotheses of Theorem \ref{main.theorem} and is Ricci-flat, then the Brill coordinate $\rho$ agrees with $\sqrt{\det G}$.
\end{prop}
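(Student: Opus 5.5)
The plan is to show that $\rho_*:=\sqrt{\det G}$ is a harmonic function on the orbit space $(\{\rho\ge 0\},\hat g)$ with the same boundary behavior as the Brill coordinate $\rho$. Harmonicity is conformally invariant in two dimensions, so $\rho-\rho_*$ will then be $\delta_2$-harmonic, and we will conclude by a Liouville/maximum principle argument.

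First, Proposition \ref{propA0} lets us take $A^i_a=0$, so $F^i_{ab}=0$. Taking the trace of the first equation of \eqref{ricciflat} (contract with $\delta^j_i$) gives
\[
\partial_a\bigl(\sqrt{\det G\det\hat g}\,\hat g^{ab}\,\partial_b\log\det G\bigr)=0 .
\]
This is $\mathrm{div}_{\hat g}(\sqrt{\det G}\,\nabla\log\det G)=0$, i.e.\ $\Delta_{\hat g}\rho_*=0$. Since $\hat g=e^{2\alpha}\delta_2$ in Brill coordinates, we get $\Delta_2\rho_*=0$ on the open half-plane. Equivalently, in the language of Lemma \ref{lem:R(g)}, the function $u:=\rho e^{Z}=\rho_*$ satisfies $\Delta_2 u=0$, so $h:=\rho_*-\rho=\rho(e^Z-1)$ is $\delta_2$-harmonic on $\{\rho>0\}$.

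Next we collect the boundary data for $h$ from Section \ref{sec7:asymptotics}.
\begin{itemize}
\item Near interior points of an axis rod, $Z$ is bounded by \eqref{phi88}, so $h=O(\rho)\to 0$.
\item At corners, $Z=\log|c_1c_2|+O(r_n^2)$ is bounded, so again $h\to0$ as $\rho\to0$.
\item At infinity, $Z=O(r^{-1-\kappa})$ in the ALE case and $O(r^{-\frac12-\kappa})$ in the ALF/AF cases. In the ALF/AF cases $\rho\sim \ell r\sin\theta$, so $h=o(\rho)=o(|(\rho,z)|)$. In the ALE case $\rho\sim r^2$, so $h=O(r^{1-\kappa})=o(|(\rho,z)|)$.
\end{itemize}
Thus $h$ is harmonic on the half-plane, continuous up to $\{\rho=0\}$ with zero boundary values, and sublinear at infinity.

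Finally, we reflect $h$ oddly across $\rho=0$. This produces an entire harmonic function on $\mathbb{R}^2$ of growth $o(|x|)$, and gradient estimates then force it to be constant. It vanishes on the axis, so $h\equiv0$ and $\rho=\sqrt{\det G}$, i.e.\ $Z=0$. The odd reflection requires only continuity up to the boundary, which the axis and corner asymptotics provide.

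The main obstacle is justifying the boundary behavior rigorously at the corners and ensuring uniformity of $Z$'s bounds. There the Brill coordinates of \cite{JaraczKhuri} are only known through the model expansions \eqref{cccc}. I would handle this by first establishing $h\to0$ at corners from \eqref{alphar}, using $G_{11}G_{22}-G_{12}^2=c_1^2c_2^2r_n^4\sin^2\theta\cos^2\theta(1+O(r_n^2))=c_1^2c_2^2\rho^2(1+O(r_n^2))$, and then applying the Phragm\'en--Lindel\"of form of the maximum principle on the half-plane.
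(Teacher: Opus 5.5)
Your proof is correct and has the same skeleton as the paper's. The trace of the first equation in \eqref{ricciflat}, with $F^i_{ab}=0$ from Proposition \ref{propA0}, makes $\sqrt{\det G}$ harmonic for $\hat g$ and hence for $\delta_2$. The coordinate $\rho$ is harmonic as well, both functions vanish on the axis, and a uniqueness argument for harmonic functions on the half-plane finishes.

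The only difference is the uniqueness step. The paper asserts $\rho-\sqrt{\det G}=o(1)$ in the asymptotic end and applies the ordinary maximum principle. You instead reflect $h=\sqrt{\det G}-\rho$ oddly across $\rho=0$ and use the Liouville theorem for harmonic functions of sublinear growth (equivalently, Phragm\'en--Lindel\"of on the half-plane).

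Your version is the more robust one. Writing $h=\rho(e^{Z}-1)$, the decay rates for $Z$ recorded in Section \ref{sec7:asymptotics} give only $h=O(r^{1/2-\kappa})$ in the ALF/AF cases, and $O(r^{1-\kappa})$ in the ALE case before invoking the Bando--Kasue--Nakajima fourth-order decay. These need not be $o(1)$, but they are always $o(|(\rho,z)|)$, and that is all your argument requires.

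The price is small. You need $h$ continuous with zero boundary values along the entire axis, including at the corners and far out along the semi-infinite rods. You correctly extract this from the boundedness of $Z$ given by the axis, corner, and infinity asymptotics.
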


\begin{proof} 
Both $\rho$ and $\sqrt{\det G}$ are harmonic with respect to $\hat{g}$. Moreover, considered as functions on the $\rho z$-half plane, they both vanish on the $z$-axis and satisfy $\rho-\sqrt{\det G}=o(1)$ in the asymptotic end. It then follows from the maximum principle that $\rho=\sqrt{\det G}$. 
\end{proof} 

Using the identification $\rho^2 = \det G$, the Ricci-flat equations \eqref{ricciflat} may be rewritten in Brill coordinates as
\begin{align}
      &\partial_a\left(\rho G^{ik}\partial^a G_{kj}\right)=0 ,\label{ricciflat2a} \\
      &\hat{R}_{ab}=\frac{1}{4}\mathrm{Tr}\left(G^{-1}\partial_aG G^{-1}\partial_b G\right)+ \hat{\nabla}_a\hat{\nabla}_b\log \rho, \label{ricciflat2b}
\end{align}
where $\partial^a=\delta_2^{ab}\partial_b$. These equations are triangularly decoupled: equation \eqref{ricciflat2a} is independent of the conformal factor $\alpha$, while \eqref{ricciflat2b} determines $\alpha$ once $G$ is known. As described below, equation \eqref{ricciflat2a} is an axisymmetric harmonic-map equation and implies the integrability condition for the first-order equations determining $\alpha$.

\begin{prop} 
Let $G$ be a solution of \eqref{ricciflat2a}. Then the coefficient $\alpha$ of \eqref{conformal}, and hence the orbit space metric $\hat{g}$, is determined up to a constant.
\end{prop}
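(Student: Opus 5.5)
The plan is to write the orbit-space metric as $\hat g = e^{2\alpha}(d\rho^2+dz^2)$. Then I will show that the Ricci-flat equation \eqref{ricciflat2b} prescribes the full gradient $\nabla\alpha$ in terms of $G$. The first-order system so obtained is integrable exactly because $G$ solves \eqref{ricciflat2a}.

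\textbf{Step 1: the first-order system.} For a conformally flat 2-metric, $\hat R_{ab} = -(\Delta_2\alpha)\,\delta_{2ab}$. For $\hat\nabla_a\hat\nabla_b\log\rho$, I use $\Gamma^c_{ab} = \delta^c_a\partial_b\alpha + \delta^c_b\partial_a\alpha - \delta_{2ab}\partial^c\alpha$ together with $\partial_\rho\log\rho = 1/\rho$ and $\partial_z\log\rho=0$. Taking the trace-free part of \eqref{ricciflat2b} eliminates $\Delta_2\alpha$. In the complex notation $\partial_\zeta=\tfrac12(\partial_\rho - i\partial_z)$, it becomes a single complex equation
\[
\frac{2}{\rho}\,\partial_\zeta\alpha \;=\; -\frac{1}{2\rho^2}\cdot(\text{const}) + \frac14\mathrm{Tr}\big(G^{-1}\partial_\zeta G\, G^{-1}\partial_\zeta G\big) + \text{(explicit terms in } \rho\text{)}.
\]
Written out in components, this is
\[
\partial_\rho\alpha = -\tfrac{1}{2\rho} + \tfrac{\rho}{8}\mathrm{Tr}\big(G^{-1}\partial_\rho G G^{-1}\partial_\rho G - G^{-1}\partial_z G G^{-1}\partial_z G\big),\qquad \partial_z\alpha = \tfrac{\rho}{4}\mathrm{Tr}\big(G^{-1}\partial_\rho G G^{-1}\partial_z G\big).
\]
I will fix the constants by direct computation rather than commit to them here. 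Any two solutions $\alpha$ of this system differ by a function with zero gradient, that is, by a constant on the connected domain $\{\rho>0\}$. This gives the uniqueness up to a constant.

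\textbf{Step 2: integrability.} Existence of $\alpha$ on the simply connected half-plane $\{\rho>0\}$ requires the 1-form $\omega = \partial_\rho\alpha\, d\rho + \partial_z\alpha\, dz$ defined above to be closed. I will compute $d\omega$ using $\det G=\rho^2$ and the harmonic map equation \eqref{ricciflat2a}, written as $\partial_a(\rho\,J^a)=0$ with $J_a = G^{-1}\partial_a G$. I will also use the identity $\partial_a J_b - \partial_b J_a = -[J_a,J_b]$, which follows from $J_a = G^{-1}\partial_a G$. With these, $\partial_z(\text{RHS}_\rho)-\partial_\rho(\text{RHS}_z)$ reduces to traces of commutators, which vanish, plus terms proportional to $\partial_a(\rho J^a)$, which vanish by \eqref{ricciflat2a}. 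A cleaner route is to work with the complex form $\partial_{\bar\zeta}(\rho\,\mathrm{Tr}(J_\zeta J_\zeta))$. This is the classical Ernst/Belinski--Zakharov integrability computation.

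\textbf{Step 3: remaining equation.} The trace part of \eqref{ricciflat2b} determines $\Delta_2\alpha$. I will verify it is implied by the first-order system via \eqref{ricciflat2a}, so it imposes no further condition. The global behavior then fixes the constant: $\alpha$ matches $\alpha_b$ at infinity, or equivalently there is no conical singularity on a semi-infinite rod.

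The main obstacle is the bookkeeping in Step 2. This means tracking the Christoffel contributions of the $\log\rho$ Hessian and confirming that the constant $-1/(2\rho)$ terms are consistent with $\det G=\rho^2$. That consistency relies on the trace identity $\mathrm{Tr}\,J_a = 2\partial_a\log\rho$.
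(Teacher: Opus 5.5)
Your proposal is correct and is essentially the paper's argument. The trace-free part of \eqref{ricciflat2b} gives exactly the paper's system $\partial_z\alpha=\tfrac{\rho}{4}\mathrm{Tr}(J_\rho J_z)$, $\partial_\rho\alpha=-\tfrac{1}{2\rho}+\tfrac{\rho}{8}\mathrm{Tr}(J_\rho^2-J_z^2)$, and closedness of the resulting 1-form on the simply connected half-plane follows from \eqref{ricciflat2a}. Your Step 3 (the trace part of \eqref{ricciflat2b} is then automatic) fills in a point the paper leaves implicit. The closedness check is also easier than you expect: $\partial_\rho\big(\tfrac{\rho}{4}\mathrm{Tr}(J_\rho J_z)\big)-\partial_z\big(\tfrac{\rho}{8}\mathrm{Tr}(J_\rho^2-J_z^2)\big)=\tfrac14\mathrm{Tr}\big(\partial_a(\rho J^a)\,J_z\big)$, and it needs no appeal to $\det G=\rho^2$ because the $-1/(2\rho)$ term is $z$-independent.
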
 

\begin{proof}  
A computation shows that the Ricci tensor of \eqref{conformal} is
\begin{equation}
\hat{R}_{ab} = -  (\delta_2)_{ab}\Delta_2 \alpha.
\end{equation}
Therefore \eqref{ricciflat2b} reduces to
\begin{align}
\begin{split}
- \hat{\Delta}_2 \alpha & = -\frac{1}{\rho^2}  - \frac{\partial_\rho \alpha}{\rho} -\frac{1}{4} \partial_\rho G^{ij} \partial_\rho G_{ij},  \\
0 & = -\frac{\partial_z \alpha}{\rho}  -\frac{1}{4} \partial_\rho G^{ij} \partial_z G_{ij}, \\
-\hat{\Delta}_2 \alpha & = \frac{\partial_\rho \alpha}{\rho}  -\frac{1}{4} \partial_z G^{ij} \partial_z G_{ij},
\end{split}
\end{align} 
which gives the conditions
\begin{equation}
\partial_z \alpha = -\frac{\rho}{4} \partial_\rho G^{ij} \partial_z G_{ij} , \qquad \partial_\rho \alpha = - \frac{1}{2\rho} + \frac{\rho}{8} \partial_z G^{ij} \partial_z G_{ij} - \frac{\rho}{8} \partial_\rho G^{ij} \partial_\rho G_{ij}.
\end{equation}
These can be written more succinctly as
\begin{equation}
\partial_z \alpha = \frac{\rho}{4} \text{Tr} ( J_z J_\rho), \qquad \partial_\rho \alpha = -\frac{1}{2\rho}+ \frac{\rho}{8} \text{Tr}( J_\rho^2- J_z^2),
\end{equation}
where $J:= G^{-1} d G$ is a matrix valued one form. Since the integrability condition for these first order equations is $\mathrm{div}_2 (\rho J)=0$, which is equivalent to \eqref{ricciflat2a}, and the orbit space is simply connected, $\alpha$ is determined up to a constant by $G$.
\end{proof}

Define the matrix $\Phi:= (\det G)^{\frac{1}{2-D}} G = \rho^{\frac{2}{2-D}} G$, and observe that $\det \Phi =1$. We compute
\begin{equation}
    \label{Hmap}
        0=\partial_a\left(\rho G^{-1}\partial^a G\right)  
        =\partial_a\left(\rho\Phi^{-1}\partial^a\Phi\right)+\tfrac{2}{D-2} I_{D-2} \Delta_2 \rho 
        =\partial_a\left(\rho\Phi^{-1}\partial^a\Phi\right),
\end{equation} 
where $I_{D-2}$ is the identity matrix. By introducing an auxiliary $2\pi$-periodic angular coordinate $\varphi$, so that $(\rho,z,\varphi)$ form standard cylindrical coordinates on $\mathbb{R}^3$ with metric 
\begin{equation}
    \delta_3 = d \rho^2 + d z^2 + \rho^2 d \varphi^2,
\end{equation} 
we obtain from \eqref{Hmap} the equation
\begin{equation}\label{Hmap1}
\text{div}_{\delta_3}\left(\Phi^{-1}\nabla \Phi\right)=0.
\end{equation} 
Thus, once a solution $\Phi$ is determined, we can obtain the matrix $G$ and by the above arguments,  $\alpha$ is determined up to an integration constant. Since $\Phi$ is a unimodular symmetric positive definite matrix of dimension $D-2$, it follows that in this setting of a cohomogeneity-two torus action with a half-plane orbit space, the Ricci-flat equations are equivalent to a harmonic map system \eqref{Hmap1} given by $\Phi: \mathbb{R}^3 \setminus\Gamma \to SL(D-2,\mathbb{R}) /SO(D-2)$. Moreover, since $\det G=0$ on the boundary of the orbit space $\Gamma$, the harmonic map $\Phi$ necessarily has singular behavior on the axis that encodes the rod structure of $M$.

\bibliography{ALEALF}

\end{document}